\documentclass[12pt, a4paper, reqno]{amsart}
\usepackage{amsmath}
\usepackage{amsfonts}
\usepackage{amssymb}
\usepackage{color}
\usepackage{comment,graphicx}
\usepackage[T1]{fontenc}
\usepackage{url}
\usepackage{ae}

\def\phi{\varphi}
\def\rho{\varrho}
\def\epsilon{\varepsilon}
\numberwithin{equation}{section}
\theoremstyle{plain}
\newtheorem{theorem}[equation]{Theorem}
\newtheorem{lemma}[equation]{Lemma}

\theoremstyle{definition}
\newtheorem{definition}[equation]{Definition}

\theoremstyle{remark}
\newtheorem{remark}[equation]{Remark}

\renewcommand{\le}{\leqslant}
\renewcommand{\ge}{\geqslant}
\renewcommand{\leq}{\leqslant}
\renewcommand{\geq}{\geqslant}

\begin{document}
\title[Hardy--H\'{e}non parabolic equations in Herz spaces]{Well-posedness and
global existence for Hardy--H\'{e}non parabolic equations in Herz spaces}
\author{Douadi Drihem}
\address{Douadi Drihem\\
	Laboratory of Functional Analysis and Geometry of Spaces, Faculty of
	Mathematics and Informatics, Department of Mathematics,\\
	M'sila University, PO Box 166 Ichebelia, M'sila 28000, Algeria}
\email{douadidr@yahoo.fr, douadi.drihem@univ-msila.dz}
\thanks{ }
\date{\today }
\maketitle

\begin{abstract}
	In this paper, we study the Hardy--H\'{e}non parabolic equation
	\begin{equation*}
	\partial_t u=\Delta u+a|x|^{-\gamma}|u|^{\beta}u,
	\qquad
	t>0,\; x\in\mathbb{R}^{n}\setminus\{0\},
	\quad
	\beta>0,\; a\in\mathbb{R},
	\end{equation*}
	under suitable assumptions on the parameter $\gamma$. We establish the local and global well-posedness of mild solutions in homogeneous Herz spaces.

\textit{MSC 2010\/}: 35B40, 46E35.

\textit{Key Words and Phrases}: Herz spaces, Hardy--H\'{e}non parabolic
equation, Well-posedness.
\end{abstract}

\section{Introduction}

The Hardy--H\'{e}non parabolic equation is a semilinear heat equation with a
spatially weighted nonlinearity. A standard form is 
\begin{equation}
\partial _{t}u=\Delta u+a|x|^{-\gamma }|u|^{\beta }u,\qquad t>0,\;x\in 
\mathbb{R}^{n}\setminus \{0\},\quad \beta >0,\;\gamma ,a\in \mathbb{R},
\label{HH-equation}
\end{equation}%
with initial value $u(0)=u_{0}$. The exponent $\gamma $ determines the
nature of the weight:

\begin{itemize}
\item \textbf{Hardy case:} $\gamma>0$, for which the weight $|x|^{-\gamma}$
is singular at the origin.

\item \textbf{H\'{e}non case:} $\gamma<0$, corresponding to the weight $%
|x|^{|\gamma|}$, which vanishes at the origin and grows at infinity.

\item \textbf{Fujita case:} $\gamma=0$, recovering the classical semilinear
heat equation 
\begin{equation*}
\partial_tu=\Delta u+a|u|^{\beta}u.
\end{equation*}
\end{itemize}

Hardy--H\'{e}non parabolic equations have been extensively studied from
various perspectives, including local and global well-posedness, critical
spaces and scaling invariance, existence and uniqueness of mild solutions,
blow-up versus global existence, Fujita-type critical exponents,
self-similar and asymptotically self-similar solutions, as well as decay
estimates and long-time asymptotic behavior. These problems have been
investigated in several functional settings, including weighted Lebesgue and
weighted Lorentz spaces with power weights; see \cite{BTW17,CIT22,CITT26}
and the references therein.

Herz spaces, introduced in \cite{Herz68}, play an important role in harmonic
analysis, particularly in the study of singular integral operators and
Fourier multipliers. Their importance stems not only from their theoretical
interest but also from their wide range of applications in analysis; see,
for example, \cite{BS85,Dr-Herz-Heat,Fi-We08,Rag09,Tu11}. For recent
developments and further results on Herz spaces, we refer the reader to \cite%
{RS,ZYZ} and the monograph \cite{LYH}.

In this paper, we study the Hardy--H\'{e}non parabolic equation %
\eqref{HH-equation} in Herz spaces. The paper is organized as follows. In
Section~2, we introduce the necessary notation and preliminaries on Herz
spaces, including interpolation inequalities and elementary embedding
results. We also establish the heat semigroup estimates in Herz spaces that
are required to handle the singular weighted nonlinearity in %
\eqref{HH-equation}.

In Section~3, we prove local well-posedness results for \eqref{HH-equation}
in Herz spaces. Finally, in Section~4, we investigate the global existence
of solutions under suitable assumptions on the initial data.

The results established in this paper can be naturally extended to
Lorentz--Herz spaces by replacing the Lebesgue spaces in the definition of
Herz spaces with the corresponding Lorentz spaces. However, to avoid
unnecessary technical complications and to keep the presentation as clear as
possible, we restrict our attention to the Herz space setting.

\section{Herz spaces}

In this section, we present some fundamental properties of Herz spaces. As
usual, $\mathbb{R}^{n}$ denotes the $n$-dimensional Euclidean space, $%
\mathbb{N}$ the set of all natural numbers, and $\mathbb{N}_{0}=\mathbb{N}%
\cup \{0\}$. The symbol $\mathbb{Z}$ stands for the set of all integers. For 
$x\in \mathbb{R}^{n}$ and $r>0$, we denote by $B(x,r)$ the open ball in $%
\mathbb{R}^{n}$ centered at $x$ with radius $r$. If $1\leq p<\infty $ and $%
\frac{1}{p}+\frac{1}{p^{\prime }}=1,$ then $p^{\prime }$ is called the
conjugate exponent of $p$.

\medskip

The Lebesgue space $L^{p}(\mathbb{R}^{n})$, $0<p\leq \infty $, consists of
all measurable functions $f$ such that 
\begin{equation*}
\Vert f\Vert _{L^{p}(\mathbb{R}^{n})}=\left( \int_{\mathbb{R}%
^{n}}|f(x)|^{p}\,dx\right) ^{1/p}<\infty ,\qquad 0<p<\infty ,
\end{equation*}%
and 
\begin{equation*}
\big\Vert f\big\Vert_{L^{\infty }(\mathbb{R}^{n})}=\underset{x\in \Omega }{%
\text{ ess-sup}}\left\vert f(x)\right\vert <\infty .
\end{equation*}%
We simply write 
\begin{equation*}
\Vert f\Vert _{L^{p}(\mathbb{R}^{n})}=\Vert f\Vert _{p}.
\end{equation*}

Let $k\in \mathbb{Z}$. For convenience, we set 
\begin{equation*}
B_{k}=B(0,2^{k}),\qquad R_{k}=B_{k}\setminus B_{k-1}.
\end{equation*}%
We denote by $\chi _{k}$ the characteristic function of the set $R_{k}$.

\begin{definition}
\label{def-inh-Herz} Let $\alpha \in \mathbb{R}$ and $1\leq p,q\leq \infty $%
. The homogeneous Herz space $\dot{K}_{p}^{\alpha ,q}$ is defined as the set
of all functions $f\in L_{\mathrm{loc}}^{p}(\mathbb{R}^{n}\setminus \{0\})$
such that 
\begin{equation*}
\Vert f\Vert _{\dot{K}_{p}^{\alpha ,q}}=\left( \sum_{k=-\infty }^{\infty
}2^{k\alpha q}\Vert f\chi _{k}\Vert _{p}^{q}\right) ^{1/q}<\infty ,
\end{equation*}%
with the usual modifications when $p=\infty $ and/or $q=\infty $.
\end{definition}

\begin{remark}
The spaces $\dot{K}_{p}^{\alpha ,q}$ are Banach spaces. If $\alpha =0$ and $%
1\leq p=q\leq \infty $ , then $\dot{K}_{p}^{0,p}$ coincide with the Lebesgue
space $L^{p}(\mathbb{R}^{n})$. Moreover, 
\begin{equation*}
\dot{K}_{p}^{\alpha ,p}=L^{p}(\mathbb{R}^{n},|\cdot |^{\alpha p}),
\end{equation*}%
that is, the weighted Lebesgue space endowed with the norm 
\begin{equation*}
\Vert f\Vert _{L^{p}(\mathbb{R}^{n},|\cdot |^{\alpha p})}=\left( \int_{%
\mathbb{R}^{n}}|f(x)|^{p}|x|^{\alpha p}\,dx\right) ^{1/p}.
\end{equation*}

Let $0<p\leq \infty $, $0<q_{1}\leq q_{2}\leq \infty $, and $\alpha \in 
\mathbb{R}$. Then the following continuous embeddings hold: 
\begin{equation}
\dot{K}_{p}^{\alpha ,q_{1}}\hookrightarrow \dot{K}_{p}^{\alpha ,q_{2}}.
\label{herz-emb}
\end{equation}
\end{remark}

A detailed discussion of the properties of these spaces may be found in the
monograph \cite{LYH08}, the papers \cite{LuYang1.95}, \cite{LuYang2.95}, 
\cite{Hernandez1998}, and the references therein.

\begin{remark}
Let $V_{\alpha ,p,q}$ denote the set of all $(\alpha ,p,q)\in \mathbb{R}%
\times \lbrack 1,\infty ]^{2}$ such that:\newline
1. $\alpha <n-\dfrac{n}{p}$, $1\leq p\leq \infty $, and $1\leq q\leq \infty $%
;\newline
2. $\alpha =n-\dfrac{n}{p}$, $1\leq p\leq \infty $, and $q=1$. \newline
Let $1\leq p,q\leq \infty $ and $\alpha \in \mathbb{R}$. Then%
\begin{equation*}
\langle T_{f},\varphi \rangle =\int_{\mathbb{R}^{n}}f(x)\varphi
(x)\,dx,\qquad \varphi \in \mathcal{D}(\mathbb{R}^{n}),
\end{equation*}%
defines a regular distribution $T_{f}\in \mathcal{D}^{\prime }(\mathbb{R}%
^{n})$ for every $f\in \dot{K}_{p}^{\alpha ,q}$ if and only if $(\alpha
,p,q)\in V_{\alpha ,p,q}$. For the proof; see \cite{Dr-Sobolev}.
\end{remark}

We now present an interpolation inequality for Herz spaces.

\begin{lemma}
\label{interpolation2} Let $0<p_{0},p_{1},q_{0},q_{1}\leq \infty $, and $%
\alpha _{0},\alpha _{1}\in \mathbb{R}$. Define 
\begin{equation*}
\alpha =(1-\theta )\alpha _{0}+\theta \alpha _{1},\qquad \frac{1}{p}=\frac{%
1-\theta }{p_{0}}+\frac{\theta }{p_{1}},\qquad \frac{1}{q}=\frac{1-\theta }{%
q_{0}}+\frac{\theta }{q_{1}}.
\end{equation*}%
Then the interpolation inequalities 
\begin{equation}
\Vert f\Vert _{\dot{K}_{p}^{\alpha ,q}}\leq \Vert f\Vert _{\dot{K}%
_{p_{0}}^{\alpha _{0},q_{0}}}^{1-\theta }\Vert f\Vert _{\dot{K}%
_{p_{1}}^{\alpha _{1},q_{1}}}^{\theta }  \label{Interpolation}
\end{equation}%
hold for all $f\in \dot{K}_{p_{0}}^{\alpha _{0},q_{0}}\cap \dot{K}%
_{p_{1}}^{\alpha _{1},q_{1}}$.
\end{lemma}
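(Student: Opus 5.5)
The plan is to apply H\"older's inequality twice: first on each annulus $R_k$ in the Lebesgue integrability index, then on the sequence over $k\in\mathbb{Z}$ in the summability index. Throughout, $f\in \dot{K}_{p_{0}}^{\alpha _{0},q_{0}}\cap \dot{K}_{p_{1}}^{\alpha _{1},q_{1}}$ is fixed and we may assume $0<\theta<1$, the endpoint cases being trivial.

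\emph{Step 1 (annulus level).} Fix $k\in\mathbb{Z}$ and write $|f\chi_k| = |f\chi_k|^{1-\theta}|f\chi_k|^{\theta}$. H\"older's inequality with the exponents $p_0/(1-\theta)$ and $p_1/\theta$, whose reciprocals add up to $1/p$, gives
\begin{equation*}
\Vert f\chi_k\Vert_p\le \Vert f\chi_k\Vert_{p_0}^{1-\theta}\,\Vert f\chi_k\Vert_{p_1}^{\theta}.
\end{equation*}
This remains valid for $0<p<1$ (apply the classical H\"older inequality to $|f|^p$ with exponents $p_0/((1-\theta)p)\ge 1$ and $p_1/(\theta p)\ge 1$) and when some exponent equals $\infty$. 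Since $\alpha=(1-\theta)\alpha_0+\theta\alpha_1$, we have $2^{k\alpha}=(2^{k\alpha_0})^{1-\theta}(2^{k\alpha_1})^{\theta}$. Setting $a_k=2^{k\alpha_0}\Vert f\chi_k\Vert_{p_0}$ and $b_k=2^{k\alpha_1}\Vert f\chi_k\Vert_{p_1}$, it follows that
\begin{equation*}
2^{k\alpha}\Vert f\chi_k\Vert_p\le a_k^{1-\theta}b_k^{\theta}.
\end{equation*}

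\emph{Step 2 (sequence level).} Take the $\ell^q(\mathbb{Z})$ quasi-norm and apply the discrete H\"older inequality in the same way. Because $1/q=(1-\theta)/q_0+\theta/q_1$, this yields
\begin{equation*}
\Vert (a_k^{1-\theta}b_k^{\theta})_k\Vert_{\ell^q}\le \Vert (a_k)_k\Vert_{\ell^{q_0}}^{1-\theta}\,\Vert (b_k)_k\Vert_{\ell^{q_1}}^{\theta}.
\end{equation*}
The right-hand side equals exactly the product of the Herz norms in \eqref{Interpolation}, so the inequality follows.

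\emph{Main obstacle.} The only point requiring care is that H\"older's inequality must be used in the general form $\Vert gh\Vert_r\le\Vert g\Vert_{s}\Vert h\Vert_{t}$ with $1/r=1/s+1/t$, valid for all $0<r,s,t\le\infty$, including quasi-normed exponents and the cases $q_i=\infty$ or $p_i=\infty$. This is checked by raising to the power $r$ to reduce to the classical case, or by direct sup estimates when an exponent is infinite.
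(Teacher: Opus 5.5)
Your proof is correct. The paper states this lemma without proof, and your two-level H\"older argument is exactly the standard argument that fills the gap: first H\"older on each annulus $R_k$ in the Lebesgue exponent, using $2^{k\alpha}=(2^{k\alpha_0})^{1-\theta}(2^{k\alpha_1})^{\theta}$, then H\"older on $\mathbb{Z}$ in the summability exponent together with monotonicity of the $\ell^q$ quasi-norm. Two points are worth making explicit: the statement tacitly requires $\theta\in[0,1]$, which you correctly assume, and your Step 1 already gives $f\chi_k\in L^p$ for every $k$, so $f\in L^p_{\mathrm{loc}}(\mathbb{R}^n\setminus\{0\})$ as the definition of the Herz space requires.
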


In what follows we use the following lemma.

\begin{lemma}
\label{Maximal-Inq}Let\ $m>n,R>0,1\leq p\leq \infty $\ and\ $0<q\leq \infty $%
. Let $f\in \dot{K}_{p}^{\alpha ,q}$ and $-\frac{n}{p}<\alpha <n(1-\frac{1}{p%
})$. Then 
\begin{equation*}
\big\|\eta _{R,m}\ast f\big\|_{\dot{K}_{p}^{\alpha ,q}}\lesssim \big\|f\big\|%
_{\dot{K}_{p}^{\alpha ,q}}
\end{equation*}%
holds, where the implicit constant is independent of $R$. In addition%
\begin{equation*}
\big\|\eta _{R,m}\ast f\big\|_{\dot{K}_{p}^{n-\frac{n}{p},\infty }}\lesssim %
\big\|f\big\|_{\dot{K}_{p}^{n-\frac{n}{p},1}}
\end{equation*}%
and%
\begin{equation*}
\big\|\eta _{R,m}\ast f\big\|_{\dot{K}_{p}^{-\frac{n}{p},\infty }}\lesssim %
\big\|f\big\|_{\dot{K}_{p}^{-\frac{n}{p},1}.}
\end{equation*}
\end{lemma}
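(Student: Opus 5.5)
Here $\eta_{R,m}(x)=R^{n}(1+R|x|)^{-m}$, so $\|\eta_{R,m}\|_{1}\lesssim 1$ uniformly in $R$ because $m>n$. We decompose $f=\sum_{j\in\mathbb{Z}}f\chi_j$ and estimate $\|(\eta_{R,m}\ast f\chi_j)\chi_k\|_p$ for each pair $(k,j)$. Then we sum in $j$, and finally take the weighted $\ell^q$ norm in $k$. The terms split into three regimes: $|j-k|\le 1$, $j\le k-2$ and $j\ge k+2$.

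\emph{Diagonal terms} $|j-k|\le1$. Young's inequality gives $\|(\eta_{R,m}\ast f\chi_j)\chi_k\|_p\le \|\eta_{R,m}\|_1\|f\chi_j\|_p\lesssim\|f\chi_j\|_p$, uniformly in $R$. This contributes $\lesssim\|f\|_{\dot K^{\alpha,q}_p}$ for every $\alpha$ and every $q$.

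\emph{Case $j\le k-2$.} For $x\in R_k$ and $y\in R_j$ we have $|x-y|\sim 2^k$. Hence $|\eta_{R,m}\ast f\chi_j(x)|\lesssim \sup_{|z|\sim2^k}\eta_{R,m}(z)\,\|f\chi_j\|_1$. Since $R^n(1+R2^k)^{-m}\le R^n(R2^k)^{-n}=2^{-kn}$ (using $m>n$), this sup is $\lesssim 2^{-kn}$, uniformly in $R$. Hölder gives $\|f\chi_j\|_1\lesssim 2^{jn(1-1/p)}\|f\chi_j\|_p$. Therefore
\[
2^{k\alpha}\|(\eta_{R,m}\ast f\chi_j)\chi_k\|_p\lesssim 2^{(j-k)(n-\frac np-\alpha)}\,2^{j\alpha}\|f\chi_j\|_p .
\]
When $\alpha<n-\frac np$, the kernel $2^{-(k-j)\delta}$ with $\delta>0$ is summable. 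A discrete Young inequality for $q\ge1$ (or the $q$-triangle inequality when $q<1$) then bounds the $\ell^q$ norm in $k$ by $\|f\|_{\dot K^{\alpha,q}_p}$.

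\emph{Case $j\ge k+2$.} Now $|x-y|\sim 2^j$, so $|\eta_{R,m}\ast f\chi_j(x)|\lesssim 2^{-jn}\|f\chi_j\|_1\lesssim 2^{-jn/p}\|f\chi_j\|_p$ on $R_k$. Taking the $L^p(R_k)$ norm costs an extra factor $2^{kn/p}$, which gives the factor $2^{(k-j)(\alpha+\frac np)}$. This is summable exactly when $\alpha>-\frac np$, and the same discrete convolution argument finishes the proof of the first inequality.

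\emph{Endpoint inequalities.} At $\alpha=n-\frac np$ the case $j\le k-2$ kernel becomes $1$, so that contribution is bounded by $\sum_j 2^{j\alpha}\|f\chi_j\|_p\le\|f\|_{\dot K^{\alpha,1}_p}$ uniformly in $k$. The other two cases are still fine, because $\alpha+\frac np=n>0$; their $\ell^\infty$ norm is controlled by the $\ell^\infty$, hence by the $\ell^1$, Herz norm via \eqref{herz-emb}. Symmetrically, at $\alpha=-\frac np$ the case $j\ge k+2$ kernel is $1$ and is bounded by the $\ell^1$ sum, while the case $j\le k-2$ kernel has exponent $n-\frac np-\alpha=n>0$.

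The main point requiring care is the uniformity in $R$ of the off-diagonal kernel bounds. It reduces to the elementary inequality $R^n(1+Rr)^{-m}\le r^{-n}$, which holds for all $R,r>0$ when $m\ge n$. Finally, the reduction to $|\cdot|$ and the $\ell^q$ summation for $0<q<1$ use $\|\sum_j g_j\|^q\le\sum_j\|g_j\|^q$ in place of Minkowski's inequality.
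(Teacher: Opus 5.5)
Your proof is correct and follows essentially the paper's route: the paper only says the argument mirrors the proof of Theorem~2.1 in \cite{LD96}, which uses exactly this decomposition $f=\sum_j f\chi_j$ into the regimes $j\le k-2$, $|j-k|\le 1$, $j\ge k+2$, with Young's inequality on the diagonal and a kernel size bound off the diagonal. Your uniform bound $\eta_{R,m}(z)\le |z|^{-n}$ is what makes the constants independent of $R$. Your handling of the endpoints, where one off-diagonal kernel becomes identically $1$ and is controlled by the $\ell^1$ sum, is also correct.
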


\begin{proof}
The method of the proof is very similar to that of the proof of Theorem 2.1
in \cite{LD96}.
\end{proof}

The following lemma plays an important role in this section.

\begin{lemma}
\label{Key-est1}\textit{Let }$\alpha ,\gamma \in \mathbb{R},1\leq p\leq
\infty ,0<s\leq \infty $ \textit{and }$R\geq H>0$\textit{. Then there exists
a constant }$c>0$ \textit{\ independent of }$R$ and $H$\textit{\ such that
for all }$f\in \dot{K}_{p}^{\alpha ,s}$, we have 
\begin{equation*}
\sup_{x\in B(0,\frac{1}{H})}\big(\eta _{R,N}\ast (|\cdot |^{-\gamma }f)(x)%
\big)\leq c\ \big(\frac{R}{H}\big)^{n}H^{\frac{n}{p}+\alpha +\gamma }\big\|f%
\big\|_{\dot{K}_{p}^{\alpha ,s}}
\end{equation*}%
for any $N$ large enough and $\alpha \leq n-\frac{n}{p}-\gamma $, where 
\begin{equation*}
s=\left\{ 
\begin{array}{ccc}
1, & \text{if} & \alpha =n-\frac{n}{p}-\gamma , \\ 
\infty , & \text{if} & \alpha <n-\frac{n}{p}-\gamma .%
\end{array}%
\right.
\end{equation*}
\end{lemma}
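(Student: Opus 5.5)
The plan is to estimate the convolution pointwise by splitting the $y$-integral into a region near the origin and a region far from it. I assume $\eta_{R,N}(x)=R^{n}(1+R|x|)^{-N}$, which is the normalization implicit in Lemma \ref{Maximal-Inq}. Fix $x$ with $|x|<1/H$ and write
\begin{equation*}
\eta_{R,N}\ast(|\cdot|^{-\gamma}f)(x)\le \int_{|y|\le 2/H}+\int_{|y|>2/H} R^{n}(1+R|x-y|)^{-N}|y|^{-\gamma}|f(y)|\,dy=:I_{1}+I_{2}.
\end{equation*}
Both pieces will be decomposed over the dyadic annuli $R_{k}$. On each annulus I use H\"older:
\begin{equation*}
\int_{R_{k}}|y|^{-\gamma}|f(y)|\,dy\lesssim 2^{k(n-\frac{n}{p}-\gamma-\alpha)}\,2^{k\alpha}\Vert f\chi_{k}\Vert_{p}.
\end{equation*}
The factor $|y|^{-\gamma}\sim 2^{-k\gamma}$ is harmless on $R_{k}$ for either sign of $\gamma$.

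\textbf{Near part $I_{1}$.} Here I use only the crude bound $\eta_{R,N}\le R^{n}$. The annuli meeting $\{|y|\le 2/H\}$ satisfy $2^{k}\le 4/H$, so
\begin{equation*}
I_{1}\lesssim R^{n}\sum_{2^{k}\le 4/H}2^{k(n-\frac{n}{p}-\gamma-\alpha)}\,2^{k\alpha}\Vert f\chi_{k}\Vert_{p}.
\end{equation*}
The hypothesis $\alpha\le n-\frac{n}{p}-\gamma$ makes the exponent $\delta:=n-\frac{n}{p}-\gamma-\alpha$ nonnegative, and the two cases of $s$ are handled separately.

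If $\delta>0$, pull out $\sup_{k}2^{k\alpha}\Vert f\chi_{k}\Vert_{p}=\Vert f\Vert_{\dot K^{\alpha,\infty}_{p}}$. The remaining geometric series is dominated by its top term, which is of size $H^{-\delta}$.

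If $\delta=0$, the sum is bounded directly by $\Vert f\Vert_{\dot K^{\alpha,1}_{p}}$, and $H^{-\delta}=1$.

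In both cases $I_{1}\lesssim R^{n}H^{-\delta}\Vert f\Vert_{\dot K^{\alpha,s}_{p}}$. Since $R^{n}H^{-\delta}=(R/H)^{n}H^{\frac{n}{p}+\alpha+\gamma}$, this is exactly the claimed bound.

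\textbf{Far part $I_{2}$.} For $|y|>2/H>2|x|$ we have $|x-y|\ge |y|/2$, hence $\eta_{R,N}(x-y)\lesssim R^{n-N}|y|^{-N}$. Summing over the annuli with $2^{k}\gtrsim 1/H$ gives
\begin{equation*}
I_{2}\lesssim R^{n-N}\sum_{2^{k}\gtrsim 1/H}2^{-k(N-\delta)}\,2^{k\alpha}\Vert f\chi_{k}\Vert_{p}\lesssim R^{n-N}H^{N-\delta}\Vert f\Vert_{\dot K^{\alpha,\infty}_{p}}.
\end{equation*}
The sum converges once $N>\delta$, which is where "$N$ large enough" enters. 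Rewriting,
\begin{equation*}
R^{n-N}H^{N-\delta}=\Big(\frac{R}{H}\Big)^{n}\Big(\frac{H}{R}\Big)^{N}H^{\frac{n}{p}+\alpha+\gamma}\le\Big(\frac{R}{H}\Big)^{n}H^{\frac{n}{p}+\alpha+\gamma},
\end{equation*}
using $R\ge H$. In the endpoint case I also use the embedding \eqref{herz-emb}, $\dot K^{\alpha,1}_{p}\hookrightarrow\dot K^{\alpha,\infty}_{p}$, to replace the $\ell^{\infty}$ norm by the $\ell^{1}$ norm.

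The only delicate point is the endpoint $\delta=0$ in $I_{1}$. There the geometric series no longer decays, which is precisely why $s=1$ is forced. Beyond that, the work is bookkeeping to check that all constants depend only on $n,p,\gamma,\alpha,N$ and not on $R$ or $H$.
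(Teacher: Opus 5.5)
Your proof is correct and follows the paper's argument. You split at radius $2/H$. Near the origin you bound the kernel by $R^{n}$ and apply H\"older on dyadic annuli, with the geometric sum controlled by $\alpha\le n-\frac{n}{p}-\gamma$ and $s=1$ needed at the endpoint. On the far part you use the kernel decay with $N$ large together with $R\ge H$. The only differences from the paper are cosmetic: you work directly with the annuli $R_{k}$ and the bound $|x-y|\ge|y|/2$, whereas the paper uses annuli rescaled by $1/H$ and the bound $|x-y|>2^{j}/H$.
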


\begin{proof}
Let\ $x\in B(0,\frac{1}{H})$. Write 
\begin{equation*}
\eta _{R,N}\ast (|\cdot |^{-\gamma }f)\leq I_{1}+I_{2},
\end{equation*}%
where 
\begin{equation*}
I_{1}(x)=\int_{\overline{B(0,\frac{2}{H})}}|f(y)||y|^{-\gamma }\eta
_{R,N}(x-y)dy
\end{equation*}%
and%
\begin{equation*}
I_{2}(x)=\int_{\mathbb{R}^{n}\backslash \overline{B(0,\frac{2}{H})}%
}|f(y)||y|^{-\gamma }\eta _{R,N}(x-y)dy.
\end{equation*}%
Let $C(\frac{2^{l}}{H})=\{x\in \mathbb{R}^{n}:\frac{2^{l-2}}{H}<\left\vert
x\right\vert \leq \frac{2^{l-1}}{H}\},l\in \mathbb{Z}$. Using the following
decomposition 
\begin{align*}
I_{1}(x)=& \sum_{j=0}^{\infty }\int_{C(\frac{2^{2-j}}{H})}|f(y)||y|^{-\gamma
}\eta _{R,N}(x-y)dy, \\
I_{2}(x)=& \sum_{j=0}^{\infty }\int_{C(\frac{2^{j+3}}{H})}|f(y)||y|^{-\gamma
}\eta _{R,N}(x-y)dy
\end{align*}%
we obtain that $\eta _{R,N}\ast (|\cdot |^{-\gamma }f)$ can be estimated
from above by 
\begin{equation}
c\text{ }\sum_{j=0}^{\infty }\big(V_{j,R,H}^{1}+V_{j,R,H}^{2}\big),
\label{aux}
\end{equation}%
where 
\begin{equation*}
V_{j,R,H}^{1}=\eta _{R,dN}\ast |\cdot |^{-\gamma }|f|\chi _{C(\frac{2^{2-j}}{%
H})},\quad V_{j,R,H}^{2}=V_{-j-1,R,H}^{1}.
\end{equation*}

\textit{Step 1.} Let $\nu \in \mathbb{Z}$ be such that $2^{\nu -1}\leq
H^{-1}<2^{\nu }$. In this step\ we estimate the first sum of \eqref{aux}. A
simple change of variable and the H\"{o}lder inequality, yield for any $x\in
B(0,\frac{1}{H})$ 
\begin{align}
\sum_{j=0}^{\infty }V_{j,R,H}^{1}(x)\leq & R^{n}\sum_{j=0}^{\infty }\big\|%
|\cdot |^{-\gamma }f\chi _{\widetilde{C}(2^{2-j+\nu })}\big\|_{1}  \notag \\
\leq & R^{n}\sum\limits_{k-\infty }^{\nu }\big\||\cdot |^{-\gamma }f\chi _{%
\widetilde{C}_{k}}\big\|_{1}  \notag \\
\lesssim & R^{n}\sum\limits_{k-\infty }^{\nu }2^{k(n-\frac{n}{p}-\gamma )}%
\big\|f\chi _{\widetilde{C}_{k}}\big\|_{p}  \notag \\
\lesssim & \big(\frac{R}{H}\big)^{n}H^{\frac{n}{p}+\alpha +\gamma }\big\|f%
\big\|_{\dot{K}_{p}^{\alpha ,s}},  \label{aux3}
\end{align}%
where $\widetilde{C}_{k}=\{x\in \mathbb{R}^{n}:2^{k-2}\leq \left\vert
x\right\vert \leq 2^{k+2}\}$ and we have used the fact that $\alpha \leq n-%
\frac{n}{p}-\gamma $, $2^{k-v}\leq 1$ and 
\begin{equation*}
\sum\limits_{k-\infty }^{\nu }2^{(k-v)(n-\frac{n}{p}-\gamma -\alpha )}\leq c,
\end{equation*}%
whenever $\alpha <n-\frac{n}{p}-\gamma $, where the positive constant $c$ is
independent of $H$.

\textit{Step 2.} In this step\ we estimate the second sum of \eqref{aux}. We
see that for any $y\in C(\frac{2^{3+j}}{H})$ and any $x\in B(0,\frac{1}{H})$%
, we have $\left\vert x-y\right\vert >\frac{2^{j}}{H}$, so that 
\begin{equation*}
\eta _{R,N}(x-y)\leq R^{n}\big(\frac{2^{j}R}{H}\big)^{-N}\leq 2^{-jN}R^{n}.
\end{equation*}%
Hence by a simple change of variable and the H\"{o}lder inequality, we
obtain 
\begin{equation*}
\sum_{j=0}^{\infty }V_{j,R,H}^{2}(x)
\end{equation*}%
can be estimated from above by 
\begin{align*}
& R^{n}\sum_{j=0}^{\infty }2^{-jN}\big\||\cdot |^{-\gamma }f\chi _{C(\frac{%
2^{3+j}}{H})}\big\|_{1} \\
\lesssim & \text{ }R^{n}H^{-N}\sum\limits_{k=\nu }^{\infty }2^{-kN}\big\|%
|\cdot |^{-\gamma }f\chi _{\widetilde{C}_{k}}\big\|_{1} \\
\lesssim & \text{ }R^{n}H^{-N}\sum\limits_{k=\nu }^{\infty }2^{k(n-\frac{n}{p%
}-N-\gamma )}\big\|f\chi _{\widetilde{C}_{k}}\big\|_{p} \\
=& c\text{ }\big(\frac{R}{H}\big)^{n}H^{\frac{n}{p}+\alpha +\gamma
}\sum\limits_{k=\nu }^{\infty }2^{(k-\nu )(n-\frac{n}{p}-N-\alpha -\gamma
)}2^{k\alpha }\big\|f\chi _{\widetilde{C}_{k}}\big\|_{p}.
\end{align*}%
Since $N\ $large enough and $2^{k-\nu }\geq 1$, the right-hand side of the
last expression is bounded by 
\begin{equation}
c\text{ }\big(\frac{R}{H}\big)^{n}H^{\frac{n}{p}+\alpha +\gamma }\sup_{k\in 
\mathbb{Z}}\big(2^{k\alpha }\big\|f\chi _{\widetilde{C}_{k}}\big\|_{p}\big)%
\lesssim \text{ }\big(\frac{R}{H}\big)^{n}H^{\frac{n}{p}+\alpha +\gamma }%
\big\|f\big\|_{\dot{K}_{p}^{\alpha ,s}}.  \label{aux4.1}
\end{equation}%
Finally, the desired estimate follows from \eqref{aux3} and \eqref{aux4.1}
taking into account the decomposition \eqref{aux}.
\end{proof}

The following lemmas are key tools in this paper.

\begin{lemma}
\label{Bernstein-Herz-ine1}\textit{Let }$R>0,\alpha _{1},\alpha _{2},\gamma
\in \mathbb{R}\mathit{\ }$\textit{and} $1\leq q\leq p\leq \infty ,0<s\leq
\infty $. \textit{Suppose that }$\alpha _{1}+\frac{n}{p}>0$\textit{\ }and%
\begin{equation*}
\max \big(\alpha _{1}-\gamma ,-\frac{n}{q}\big)\leq \alpha _{2}<\min \big(n-%
\frac{n}{q}-\gamma ,n-\frac{n}{q}\big),\quad \alpha _{2}\neq -\frac{n}{q}.
\end{equation*}%
\textit{Then there exists a positive constant }$c>0$\textit{\ independent of 
}$R$\textit{\ such that for all }$f\in \dot{K}_{q}^{\alpha _{2},\theta }$%
\textit{, we have} 
\begin{equation*}
\big\|\eta _{R,N}\ast (|\cdot |^{-\gamma }f)\big\|_{\dot{K}_{p}^{\alpha
_{1},s}}\leq c\text{ }R^{\frac{n}{q}-\frac{n}{p}+\alpha _{2}-\alpha
_{1}+\gamma }\big\|f\big\|_{\dot{K}_{q}^{\alpha _{2},\theta }},
\end{equation*}%
where 
\begin{equation*}
\theta =\left\{ 
\begin{array}{ccc}
s, & \text{if} & \alpha _{2}=\alpha _{1}-\gamma , \\ 
\infty , & \text{if} & \alpha _{2}>\alpha _{1}-\gamma .%
\end{array}%
\right.
\end{equation*}
\end{lemma}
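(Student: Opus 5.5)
Throughout, $\eta_{R,N}(x)=R^n(1+R|x|)^{-N}$ (this is how it is used in the proof of Lemma \ref{Key-est1}).

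The plan is to estimate each piece $\|\eta_{R,N}\ast(|\cdot|^{-\gamma}f)\chi_k\|_p$ separately, with two regimes relative to the scale $2^{k}\sim R^{-1}$. Fix $k\in\mathbb{Z}$ and let $\nu\in\mathbb{Z}$ satisfy $2^{\nu-1}\le R^{-1}<2^\nu$. For $x\in R_k$, split $f=f\chi_{B_{k-2}}+f\chi_{B_{k+2}\setminus B_{k-2}}+f\chi_{\mathbb{R}^n\setminus B_{k+2}}$. This gives inner, local and outer terms, which are treated as follows.

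\textbf{Local term.} Here $|y|\sim 2^k$, so $|y|^{-\gamma}\sim 2^{-k\gamma}$. By Young's inequality with $1+\frac1p=\frac1r+\frac1q$,
\[
\|\eta_{R,N}\ast(\cdot)\|_p\lesssim R^{\frac nq-\frac np}2^{-k\gamma}\|f\chi_{\widetilde C_k}\|_q .
\]
Multiplying by $2^{k\alpha_1}$ gives the factor
\[
R^{\frac nq-\frac np}\,2^{k(\alpha_1-\gamma-\alpha_2)}\,2^{k\alpha_2}\|f\chi_{\widetilde C_k}\|_q .
\]
This is good for $2^k\gtrsim R^{-1}$, because $\alpha_1-\gamma-\alpha_2\le 0$ gives $2^{k(\alpha_1-\gamma-\alpha_2)}\lesssim R^{\alpha_2-\alpha_1+\gamma}$.

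For $2^k<R^{-1}$ I would not use Young. Instead, Lemma \ref{Key-est1} with $H=R$, applied to $f$ in $\dot K_q^{\alpha_2,\infty}$, gives the sup bound
\[
\sup_{B(0,1/R)}\eta_{R,N}\ast(|\cdot|^{-\gamma}|f|)\lesssim R^{\frac nq+\alpha_2+\gamma}\|f\|_{\dot K_q^{\alpha_2,\infty}} .
\]
The condition $\alpha_2<n-\frac nq-\gamma$ is used exactly here. Hence
\[
2^{k\alpha_1}\|\cdots\chi_k\|_p\lesssim 2^{k(\alpha_1+\frac np)}R^{\frac nq+\alpha_2+\gamma}\|f\| ,
\]
and summing over $k\le\nu$ converges geometrically because $\alpha_1+\frac np>0$. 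The sum is $\lesssim R^{-\alpha_1-\frac np}R^{\frac nq+\alpha_2+\gamma}$, which is the claimed power, and the $\ell^s$ norm of a geometric sequence is fine for any $s>0$. This disposes of all three pieces for small $k$ at once.

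\textbf{Large $k$ ($2^k\ge R^{-1}$): inner and outer terms.} For the inner part, $|x-y|\sim 2^k$, so $\eta_{R,N}(x-y)\lesssim R^{n}(R2^k)^{-N}$. Then
\[
\|\cdot\|_{L^p(R_k)}\lesssim R^n(R2^k)^{-N}2^{kn/p}\sum_{j\le k-2}\|\,|\cdot|^{-\gamma}f\chi_j\|_1 ,
\]
and by H\"older each summand is $\lesssim 2^{j(n-\frac nq-\gamma-\alpha_2)}\|f\|_{\dot K^{\alpha_2,\infty}_q}$. The exponent is positive since $\alpha_2<n-\frac nq-\gamma$, so the sum is dominated by $j=k$. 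The resulting factor $(R2^k)^{-N}$ makes the sum over $k\ge\nu$ geometric, with total the correct power of $R$.

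For the outer part, $|x-y|\sim|y|\sim 2^j$ with $j>k+2$. This gives $R^n(R2^j)^{-N}2^{j(n-\frac nq-\gamma-\alpha_2)}$ times the $\ell^\infty$ norm, times $2^{kn/p}$. This is summable in $j$ for $N$ large, then in $k$ thanks to $\alpha_1+\frac np>0$, which controls $2^{k(\alpha_1+n/p)}$ against $2^{-jN}$ with $j>k$. Here $\alpha_2>-\frac nq$ is the analogue needed in Lemma \ref{Key-est1}-type sums if one bounds instead by an $L^\infty$ estimate.

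\textbf{Main obstacle.} The hardest part is the endpoint $\alpha_2=\alpha_1-\gamma$, where $\theta=s$. In the large-$k$ local term there is then no geometric gain, and one must keep the $\ell^s$ sum
\[
\Big(\sum_k \big(2^{k\alpha_2}\|f\chi_{\widetilde C_k}\|_q\big)^s\Big)^{1/s}
\]
intact. Every other term decays geometrically, so it only needs $\ell^\infty$ control of $f$, which is bounded by the $\dot K^{\alpha_2,s}_q$ norm via the embedding \eqref{herz-emb}. With this, all pieces combine to the stated bound.
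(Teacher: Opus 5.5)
Your proof is correct. On the range $2^k\gtrsim R^{-1}$ it takes a genuinely different and more elementary route than the paper; the small-$k$ part coincides with the paper's estimate of $I_{1,R}$ (the sup bound of Lemma~\ref{Key-est1} plus $\alpha_1+\frac np>0$).

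For large $k$, the paper proceeds as follows:
\begin{itemize}
\item it uses $\eta_{R,N}\lesssim\eta_{R,N}\ast\eta_{R,N}$ to rewrite $\eta_{R,N}\ast(|\cdot|^{-\gamma}|f|)$ as a double convolution;
\item it splits the outer integral into $V^1_{R,k},V^2_{R,k},V^3_{R,k}$ and applies Young's inequality to $V^2_{R,k}$;
\item it splits once more into $J_{1,k},J_{2,k},J_{3,k}$, controlling the local piece $J_{2,k}\lesssim 2^{-k\gamma}\eta_{R,N}\ast|f|$ by the boundedness of $\eta_{R,N}\ast$ on $\dot K_q^{\alpha_2,\infty}$ from Lemma~\ref{Maximal-Inq}.
\end{itemize}
You instead decompose $f$ itself into inner, local and outer parts relative to $R_k$. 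You apply Young's inequality directly to the local part, where $|y|^{-\gamma}\sim 2^{-k\gamma}$, so only $\|f\chi_{\widetilde C_k}\|_q$ appears. Your inner and outer parts are handled by kernel decay and H\"older, much as the paper treats $J_{1,k}$ and $J_{3,k}$.

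Your route needs one layer of decomposition instead of two and no use of Lemma~\ref{Maximal-Inq}. It also gives a slightly stronger result. You never use $\alpha_2\ge-\frac nq$, $\alpha_2\neq-\frac nq$ or $\alpha_2<n-\frac nq$; you only need $\alpha_1+\frac np>0$ and $\alpha_1-\gamma\le\alpha_2<n-\frac nq-\gamma$. This is natural, because the estimate really concerns $|\cdot|^{-\gamma}f\in\dot K_q^{\alpha_2+\gamma,\theta}$. The extra hypotheses are artifacts of routing the local term through Lemma~\ref{Maximal-Inq}, whose appeal in the paper is mainly modularity.

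Two small corrections to your write-up. First, for the inner and outer parts, the sum over $2^k\ge R^{-1}$ converges because $N$ is large, not because $\alpha_1+\frac np>0$; that condition is needed only for small $k$. Second, your remark attributing a role to $\alpha_2>-\frac nq$ can be dropped, since Lemma~\ref{Key-est1} has no lower restriction on $\alpha$.
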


\begin{proof}
By similarity, we assume that $0<s<\infty $. Write 
\begin{equation}
\sum\limits_{k=-\infty }^{\infty }2^{k\alpha _{1}s}\big\|(\eta _{R,N}\ast
(|\cdot |^{-\gamma }f))\chi _{k}\big\|_{p}^{s}=I_{1,R}+I_{2,R},  \label{aux5}
\end{equation}%
with 
\begin{equation*}
I_{1,R}=\sum\limits_{k\in \mathbb{Z},2^{k}\leq \frac{1}{R}}2^{k\alpha _{1}s}%
\big\|(\eta _{R,N}\ast (|\cdot |^{-\gamma }f))\chi _{k}\big\|_{p}^{s}
\end{equation*}%
and%
\begin{equation*}
I_{2,R}=\sum\limits_{k\in \mathbb{Z},2^{k}>\frac{1}{R}}2^{k\alpha _{1}s}%
\big\|(\eta _{R,N}\ast (|\cdot |^{-\gamma }f))\chi _{k}\big\|_{p}^{s}
\end{equation*}%
We will estimate each term separately.

\textit{Estimate of }$I_{1,R}$. Lemma \ref{Key-est1}\ gives%
\begin{eqnarray*}
I_{1,R} &\leq &\sup_{x\in B(0,\frac{2}{R})}(\eta _{R,N}\ast (|\cdot
|^{-\gamma }f)(x))^{s}\sum\limits_{k\in \mathbb{Z},2^{k}\leq \frac{1}{R}%
}2^{k(\alpha _{1}+\frac{n}{p})s} \\
&\lesssim &R^{(\frac{n}{q}-\frac{n}{p}+\alpha _{2}-\alpha _{1}+\gamma )s}%
\big\|f\big\|_{\dot{K}_{q}^{\alpha _{2},\infty }}^{s},
\end{eqnarray*}%
because of $\alpha _{1}+\frac{n}{p}>0,\alpha _{2}<n-\frac{n}{q}-\gamma $ and 
$2^{k-1}R<1$.

\textit{Estimate of }$I_{2,R}$. Observe that%
\begin{eqnarray*}
\eta _{R,N}\ast \eta _{R,N}(z) &=&\int_{\mathbb{R}^{n}}\eta _{R,N}(z-h)\eta
_{R,N}(h)dh \\
&\geq &\eta _{R,N}(z)\int_{\mathbb{R}^{n}}\eta _{R,2N}(h)dh \\
&\geq &c\text{ }\eta _{R,N}(z)
\end{eqnarray*}%
for any $z\in \mathbb{R}^{n}$ and any $N>\frac{n}{2}$, where the positive
constant $c$ is independent of $z$ and $R$. Hence for any $x\in R_{k}$ 
\begin{align*}
\eta _{R,N}\ast (|\cdot |^{-\gamma }|f|)(x)\leq & c\text{ }\int_{\mathbb{R}%
^{n}}\eta _{R,N}\ast (|\cdot |^{-\gamma }|f|)(y)\eta _{R,N}(x-y)dy \\
\lesssim & V_{R,k}^{1}(x)+V_{R,k}^{2}(x)+V_{R,k}^{3}(x),
\end{align*}%
where the implicit constant is independent of $x,k$ and $R$, and 
\begin{equation*}
V_{R,k}^{1}(x)=\int_{B(0,2^{k-2})}\eta _{R,N}\ast (|\cdot |^{-\gamma
}|f|)(y)\eta _{R,N}(x-y)dy,
\end{equation*}%
\begin{equation*}
V_{R,k}^{2}(x)=\int_{\widetilde{C}_{k}}\eta _{R,N}\ast (|\cdot |^{-\gamma
}|f|)(y)\eta _{R,N}(x-y)dy
\end{equation*}%
and 
\begin{equation*}
V_{R,k}^{3}(x)=\int_{\mathbb{R}^{n}\backslash B(0,2^{k+2})}\eta _{R,N}\ast
(|\cdot |^{-\gamma }|f|)(y)\eta _{R,N}(x-y)dy
\end{equation*}%
It is easy to verify that if $x\in R_{k}$ and $y\in B(0,2^{k-2})$, then $%
\left\vert x-y\right\vert >2^{k-2}$. This estimate and Lemma \ref{Key-est1},
yield for any $x\in R_{k}$ and any $2^{k}R>1$ 
\begin{align*}
V_{R,k}^{1}(x)& \lesssim \text{ }\sup_{y\in B(0,2^{k})}\big(\eta _{R,N}\ast
(|\cdot |^{-\gamma }|f|)(y)\big)\int_{2^{k-2}<\left\vert z\right\vert
<2^{k+1}}\eta _{R,N}(z)dz \\
& \lesssim R^{2n-N}2^{-(\frac{n}{q}+\alpha _{2}+N+\gamma -2n)k}\big\|f\big\|%
_{\dot{K}_{q}^{\alpha _{2},\infty }},
\end{align*}%
where the positive constant $c$ is independent of $x,R,k$ and $f$. From
this,\ we get 
\begin{align*}
& \sum\limits_{k\in \mathbb{Z},2^{k}>\frac{1}{R}}2^{k\alpha _{1}s}\big\|%
V_{R,k}^{1}\chi _{k}\big\|_{p}^{s} \\
& \lesssim R^{(2n-N)s}\big\|f\big\|_{\dot{K}_{q}^{\alpha _{2},\infty
}}^{s}\sum\limits_{k\in \mathbb{Z},2^{k}>\frac{1}{R}}2^{k(\frac{n}{p}-\frac{n%
}{q}+2n+\alpha _{1}-\alpha _{2}-N-\gamma )s} \\
& \lesssim \text{ }R^{(\frac{n}{q}-\frac{n}{p}+\alpha _{2}-\alpha
_{1}+\gamma )s}\big\|f\big\|_{\dot{K}_{q}^{\alpha _{2},\infty }}^{s}.
\end{align*}%
We estimate $V_{R,k}^{2}$. Applying Young's inequality, we obtain 
\begin{equation*}
\big\|V_{R,k}^{2}\chi _{k}\big\|_{p}\leq c\text{ }R^{\frac{n}{q}-\frac{n}{p}}%
\big\|(\eta _{R,N}\ast (|\cdot |^{-\gamma }|f|))\chi _{\widetilde{C}_{k}}%
\big\|_{q},
\end{equation*}%
where we have used the fact that $N$ is large enough. We see that%
\begin{equation*}
\eta _{R,N}\ast (|\cdot |^{-\gamma }|f|)=J_{1,k}+J_{2,k}+J_{3,k},
\end{equation*}%
where 
\begin{equation*}
J_{1,k}(x)=\int_{B_{k-4}}|f(y)||y|^{-\gamma }\eta _{R,N}(x-y)dy,
\end{equation*}%
\begin{equation*}
J_{2,k}(x)=\int_{\widehat{C}_{k}}|f(y)||y|^{-\gamma }\eta _{R,N}(x-y)dy
\end{equation*}%
and%
\begin{equation*}
J_{3,k}(x)=\int_{\mathbb{R}^{n}\backslash B_{k+4}}|f(y)||y|^{-\gamma }\eta
_{R,N}(x-y)dy,
\end{equation*}%
with 
\begin{equation*}
\widehat{C}_{k}=\left\{ x\in \mathbb{R}^{n}:\,2^{k-4}\leq |x|\leq
2^{k+4}\right\} ,\quad k\in \mathbb{Z}.
\end{equation*}%
First, we have%
\begin{equation*}
J_{2,k}\lesssim 2^{-k\gamma }\eta _{R,N}\ast |f|.
\end{equation*}%
Hence%
\begin{equation*}
\Big(\sum\limits_{k\in \mathbb{Z},2^{k}>\frac{2}{R}}2^{k\alpha _{1}s}\big\|%
J_{2,k}\chi _{\widetilde{C}_{k}}\big\|_{q}^{s}\Big)^{\frac{1}{s}}
\end{equation*}%
can be estimated from above by%
\begin{eqnarray*}
&&c\Big(\sum\limits_{k\in \mathbb{Z},2^{k}>\frac{2}{R}}2^{k\left( \alpha
_{1}-\alpha _{2}-\gamma \right) s}2^{k\alpha _{2}s}\big\|(\eta _{R,N}\ast
|f|)\chi _{\widetilde{C}_{k}}\big\|_{q}^{s}\Big)^{\frac{1}{s}} \\
&\lesssim &\text{ }R^{\alpha _{2}-\alpha _{1}+\gamma }\sup_{k\in \mathbb{Z}}%
\big(2^{k\alpha _{2}}\big\|(\eta _{R,N}\ast f)\chi _{\widehat{C}_{k}}\big\|%
_{q}\big)\Big(\sum\limits_{k\in \mathbb{Z},2^{k}>\frac{2}{R}}\left(
2^{k}R\right) ^{\left( \alpha _{1}-\alpha _{2}-\gamma \right) s}\Big)^{\frac{%
1}{s}} \\
&\lesssim &\text{ }R^{\alpha _{2}-\alpha _{1}+\gamma }\big\|\eta _{R,N}\ast f%
\big\|_{\dot{K}_{q}^{\alpha _{2},\infty }} \\
&\lesssim &\text{ }R^{\alpha _{2}-\alpha _{1}+\gamma }\big\|f\big\|_{\dot{K}%
_{q}^{\alpha _{2},\infty }},
\end{eqnarray*}%
if $\alpha _{2}>\alpha _{1}-\gamma $, where we have used Lemma \ref%
{Maximal-Inq} since $-\frac{n}{q}<\alpha _{2}<n-\frac{n}{q}$. The case $%
\alpha _{2}=\alpha _{1}-\gamma $ can be easily solved. Now%
\begin{eqnarray*}
J_{1,k}(x) &=&\sum_{l=0}^{\infty }\int_{R_{k-4-l}}|f(y)||y|^{-\gamma }\eta
_{R,N}(x-y)dy \\
&\lesssim &R^{n-N}2^{-kN}\sum_{l=0}^{\infty }2^{(l-k)\gamma
}\int_{R_{k-4-l}}|f(y)|dy \\
&=&R^{n-N}2^{-kN}\sum_{v=-\infty }^{k}2^{-v\gamma }\int_{R_{v}}|f(y)|dy.
\end{eqnarray*}
Using H\"{o}lder's inequality, we get%
\begin{eqnarray*}
J_{1,k}(x) &\lesssim &R^{n-N}2^{-kN}\sum_{v=-\infty }^{k}2^{v(n-\frac{n}{q}%
-\gamma )}\big\|f\chi _{v}\big\|_{q} \\
&\lesssim &R^{n-N}2^{-kN}\big\|f\big\|_{\dot{K}_{q}^{\alpha _{2},\infty
}}\sum_{v=-\infty }^{k}2^{v(n-\frac{n}{q}-\alpha _{2}-\gamma )} \\
&\lesssim &R^{n-N}2^{k(n-\frac{n}{q}-\alpha _{2}-\gamma -N)}\big\|f\big\|_{%
\dot{K}_{q}^{\alpha _{2},\infty }}
\end{eqnarray*}%
because of $\alpha _{2}<n-\frac{n}{q}-\gamma $. Hence $\Big(%
\sum\limits_{k\in \mathbb{Z},2^{k}>\frac{2}{R}}2^{k\alpha _{1}r}\big\|%
J_{1,k}\chi _{\widehat{C}_{k}}\big\|_{q}^{s}\Big)^{\frac{1}{s}}$ is bounded
by%
\begin{equation*}
cR^{n-N}\big\|f\big\|_{\dot{K}_{q}^{\alpha _{2},\infty }}\Big(%
\sum\limits_{k\in \mathbb{Z},2^{k}>\frac{2}{R}}2^{k(n-\alpha _{2}-\gamma
-N+\alpha _{1})s}\Big)^{\frac{1}{s}}\lesssim R^{\alpha _{2}-\alpha
_{1}+\gamma }\big\|f\big\|_{\dot{K}_{q}^{\alpha _{2},\infty }}.
\end{equation*}%
Now%
\begin{eqnarray*}
J_{3,k}(x) &=&\sum_{l=0}^{\infty }\int_{R_{k+4+l}}|f(y)||y|^{-\gamma }\eta
_{R,N}(x-y)dy \\
&\lesssim &R^{n-N}\sum_{l=0}^{\infty }2^{-(k+l)(\gamma
+N)}\int_{R_{k+4+l}}|f(y)|dy \\
&\lesssim &R^{n-N}\sum_{v=k}^{\infty }2^{-v(\gamma +N-n+\frac{n}{q})}\big\|%
f\chi _{v}\big\|_{q} \\
&\lesssim &2^{-k(\gamma +N-n+\frac{n}{q}+\alpha _{2})}R^{n-N}\big\|f\big\|_{%
\dot{K}_{q}^{\alpha _{2},\infty }}.
\end{eqnarray*}%
This leads to $\Big(\sum\limits_{k\in \mathbb{Z},2^{k}>\frac{2}{R}%
}2^{k\alpha _{1}s}\big\|J_{3,k}\chi _{\widehat{C}_{k}}\big\|_{q}^{s}\Big)^{%
\frac{1}{s}}$ is bounded by%
\begin{equation*}
R^{n-N}\big\|f\big\|_{\dot{K}_{q}^{\alpha _{2},\infty }}\Big(%
\sum\limits_{k\in \mathbb{Z},2^{k}>\frac{2}{R}}2^{k(n-\alpha _{2}-\gamma
-N+\alpha _{1})r}\Big)^{\frac{1}{r}}\lesssim R^{\alpha _{2}-\alpha
_{1}+\gamma }\big\|f\big\|_{\dot{K}_{q}^{\alpha _{2},\infty }}.
\end{equation*}
For $V_{R,k}^{3}$, we see that, $V_{R,k}^{3}(x)$ can be estimated from above
by 
\begin{equation*}
c\sum\limits_{l=0}^{\infty }\int_{R_{k+l+3}}\eta _{R,N}\ast (|\cdot
|^{-\gamma }|f|)(y)\eta _{R,N}(x-y)dy
\end{equation*}%
for any $x\in R_{k}$. Since $\left\vert x-y\right\vert >3\cdot 2^{k+l}$ for
any $x\in R_{k}$ and any $y\in R_{k+l+3}$, the right-hand side of the last
term is bounded by 
\begin{align*}
& c\text{ }R^{n-N}\sum\limits_{l=0}^{\infty }2^{-(k+l)N}\big\|\eta
_{R,N}\ast (|\cdot |^{-\gamma }|f|)\chi _{C_{k+l+3}}\big\|_{1} \\
& =c\text{ }R^{n-N}\sum\limits_{j=k+3}^{\infty }2^{j(n-\frac{n}{q}-N)}\big\|%
\eta _{R,N}\ast (|\cdot |^{-\gamma }|f|)\chi _{C_{j}}\big\|_{q} \\
& \lesssim \text{ }R^{n-N}\sum\limits_{j=k+3}^{\infty }2^{j(n-N)}\sup_{x\in
B(0,2^{j})}(\eta _{R,N}\ast (|\cdot |^{-\gamma }|f|)(x)) \\
& \lesssim \text{ }R^{2n-N}\sum\limits_{j=k+3}^{\infty }2^{j(2n-N-\alpha
_{2}-\frac{n}{q}-\gamma )}\big\|f\big\|_{\dot{K}_{q}^{\alpha _{2},\infty }}
\\
& \lesssim \text{ }R^{2n-N}2^{k(2n-N-\alpha _{2}-\frac{n}{q}-\gamma )}\big\|f%
\big\|_{\dot{K}_{q}^{\alpha _{2},\infty }},
\end{align*}%
where we have used Lemma \ref{Key-est1} since $R>2^{-k}.$. Therefore, 
\begin{align*}
& \sum\limits_{k\in \mathbb{Z},2^{k}>\frac{1}{R}}2^{k\alpha _{1}s}\big\|%
V_{R,k}^{3}\chi _{k}\big\|_{p}^{s} \\
& \lesssim \text{ }R^{(2n-N)s}\big\|f\big\|_{\dot{K}_{q}^{\alpha
_{2},p}}^{s}\sum\limits_{k\in \mathbb{Z},2^{k}>\frac{1}{R}}2^{k(\frac{n}{p}%
+\alpha _{1}+2n-N-\alpha _{2}-\frac{n}{q}-\gamma )s} \\
& \lesssim \text{ }R^{(\frac{n}{q}-\frac{n}{p}+\alpha _{2}-\alpha
_{1}+\gamma )s}\big\|f\big\|_{\dot{K}_{q}^{\alpha _{2},\infty
}}^{s}\sum\limits_{k\in \mathbb{Z},2^{k}>\frac{1}{R}}\left( 2^{k}R\right) ^{(%
\frac{n}{p}+\alpha _{1}+2n-N-\alpha _{2}-\frac{n}{q}-\gamma )s} \\
& \lesssim \text{ }R^{(\frac{n}{q}-\frac{n}{p}+\alpha _{2}-\alpha
_{1}+\gamma )s}\big\|f\big\|_{\dot{K}_{q}^{\alpha _{2},\infty }}^{s},
\end{align*}%
where we have used the fact that $N$ is lage enough. The proof is complete.
\end{proof}

We now consider the case $\alpha _{2}=n-\frac{n}{q}-\gamma $ in Lemma \ref%
{Bernstein-Herz-ine1}.

\begin{lemma}
\label{Bernstein-Herz-ine1 copy(1)}\textit{Let }$R>0,0<\gamma <n,\alpha \in 
\mathbb{R}\mathit{\ }$\textit{and} $1\leq q\leq p\leq \infty ,0<s\leq \infty 
$. \textit{Suppose that }$\alpha +\frac{n}{p}>0$ and%
\begin{equation*}
\max (\alpha -\gamma ,-\frac{n}{q})<n-\frac{n}{q}-\gamma .
\end{equation*}%
\textit{Then there exists a positive constant }$c>0$\textit{\ independent of 
}$R$\textit{\ such that for all }$f\in \dot{K}_{q}^{n-\frac{n}{q}-\gamma ,1}$%
\textit{, we have} 
\begin{equation*}
\big\|\eta _{R,N}\ast (|\cdot |^{-\gamma }f)\big\|_{\dot{K}_{p}^{\alpha
,s}}\leq c\text{ }R^{n-\frac{n}{p}-\alpha }\big\|f\big\|_{\dot{K}_{q}^{n-%
\frac{n}{q}-\gamma ,1}}.
\end{equation*}%
In addition if $1\leq s\leq \infty $, then we have 
\begin{equation*}
\big\|\eta _{R,N}\ast (|\cdot |^{-\gamma }f)\big\|_{\dot{K}_{p}^{n-\frac{n}{q%
},s}}\leq c\text{ }R^{\frac{n}{q}-\frac{n}{p}}\big\|f\big\|_{\dot{K}_{q}^{n-%
\frac{n}{q}-\gamma ,1}}.
\end{equation*}
\end{lemma}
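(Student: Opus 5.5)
We follow the scheme of the proof of Lemma \ref{Bernstein-Herz-ine1}, now with $\alpha_2=n-\frac{n}{q}-\gamma$ and $f\in\dot{K}_q^{n-\frac nq-\gamma,1}$. Note that the exponent of $R$ is $\frac nq-\frac np+\alpha_2-\alpha+\gamma=n-\frac np-\alpha$, as in the earlier lemma. We may assume $0<s<\infty$ and split the $s$-th power of the norm into $I_{1,R}$ (shells with $2^k\le 1/R$) and $I_{2,R}$ (shells with $2^k>1/R$).

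For $I_{1,R}$ we use Lemma \ref{Key-est1} with $\alpha=n-\frac nq-\gamma$, i.e. the endpoint case with $s=1$. It gives $\sup_{B(0,2/R)}\eta_{R,N}\ast(|\cdot|^{-\gamma}f)\lesssim R^{n-\frac nq-\frac nq+\frac nq}\cdots$. More precisely, with $H=R$ we get $H^{\frac nq+\alpha_2+\gamma}=R^{n}$, so the bound is $R^{n}\|f\|_{\dot K_q^{\alpha_2,1}}$. Summing $2^{k(\alpha+\frac np)s}$ over $2^k\le 1/R$ uses $\alpha+\frac np>0$ and yields $R^{(n-\frac np-\alpha)s}\|f\|^s$.

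For $I_{2,R}$ we reproduce the decomposition into $V^1_{R,k},V^2_{R,k},V^3_{R,k}$, and $V^2$ further into $J_{1,k},J_{2,k},J_{3,k}$.
\begin{itemize}
\item The terms $V^1$, $V^3$ and $J_{3,k}$ again only use Lemma \ref{Key-est1} (now in its $s=1$ endpoint form) and the rapid decay $2^{-kN}$ with $N$ large. Hence they go through verbatim with $\|f\|_{\dot K_q^{\alpha_2,\infty}}$ replaced by $\|f\|_{\dot K_q^{\alpha_2,1}}$, which is larger by \eqref{herz-emb}.
\item In $J_{1,k}$, the sum $\sum_{v\le k}2^{v(n-\frac nq-\gamma)}\|f\chi_v\|_q$ is no longer geometric. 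It is simply bounded by $\|f\|_{\dot K_q^{\alpha_2,1}}$, giving $J_{1,k}\lesssim R^{n-N}2^{-kN}\|f\|_{\dot K_q^{\alpha_2,1}}$. The subsequent sum over $2^k>2/R$ converges for $N$ large.
\item The delicate term is $J_{2,k}\lesssim 2^{-k\gamma}\eta_{R,N}\ast|f|$. Here Lemma \ref{Maximal-Inq} must be applied with $\alpha_2=n-\frac nq-\gamma$. The hypothesis $0<\gamma<n$ and $-\frac nq<\alpha_2$ give $-\frac nq<\alpha_2<n-\frac nq$, so the maximal estimate is available in $\dot K_q^{\alpha_2,\infty}$ (or $\dot K_q^{\alpha_2,1}$, via the $q$-range in the lemma). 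Then, since $\alpha>\alpha_2$ fails or holds depending on the case, we use the hypothesis $\alpha-\gamma<n-\frac nq-\gamma$, i.e. $\alpha_2>\alpha-\gamma$. Thus $\sum_{2^k>2/R}(2^kR)^{(\alpha-\alpha_2-\gamma)s}$ converges, giving $R^{\alpha_2-\alpha+\gamma}\|f\|_{\dot K^{\alpha_2,1}_q}$.
\end{itemize}
Combined with the Young factor $R^{\frac nq-\frac np}$, this yields the first claim.

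For the second claim the target weight is $\alpha=n-\frac nq$, so $\alpha-\gamma=\alpha_2$ exactly: the strict inequality is lost. The exponent of $R$ becomes $\frac nq-\frac np$. For $J_{2,k}$ we then need $\|\eta_{R,N}\ast|f|\|_{\dot K_q^{\alpha_2,s}}$ with no geometric gain. We use $s\ge1$ together with the embedding $\dot K^{\alpha_2,1}_q\hookrightarrow\dot K^{\alpha_2,s}_q$ and the first part of Lemma \ref{Maximal-Inq} with exponent $s$ (valid since $\alpha_2$ lies strictly inside $(-\frac nq,n-\frac nq)$). Also $\alpha+\frac np=n-\frac nq+\frac np>0$ holds automatically, so $I_{1,R}$ is unchanged. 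The main obstacle is this endpoint $J_{2,k}$/$V^2$ step, together with checking that the Key-est1 endpoint ($s=1$) suffices for all the off-diagonal terms. I expect the remaining terms to be routine with $N$ large.
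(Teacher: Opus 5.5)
Your proposal is correct. It is the adaptation of the proof of Lemma~\ref{Bernstein-Herz-ine1} that the paper intends; the paper gives no separate proof of this lemma. The endpoint ($s=1$) case of Lemma~\ref{Key-est1} controls $I_{1,R}$, $V^1_{R,k}$ and $V^3_{R,k}$, the sum in $J_{1,k}$ is bounded directly by $\|f\|_{\dot{K}_{q}^{n-\frac{n}{q}-\gamma,1}}$, and $J_{2,k}$ is handled by $\alpha<n-\frac{n}{q}$ in the first claim and by $s\geq 1$ together with Lemma~\ref{Maximal-Inq} in $\dot{K}_{q}^{n-\frac{n}{q}-\gamma,s}$ in the second.

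One small correction: $n-\frac{n}{q}+\frac{n}{p}>0$ is not automatic from $1\leq q\leq p\leq\infty$. It equals $0$ for $q=1$, $p=\infty$, and there the second estimate actually fails for $s<\infty$. It does, however, follow from the standing hypotheses, since these force $-\frac{n}{p}<\alpha<n-\frac{n}{q}$.
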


The case $p\leq q$ was not considered in the previous lemmas. The following
lemma establishes the corresponding result.

\begin{lemma}
\label{Bernstein-Herz-ine2}\textit{Let }$R>0,\gamma ,\alpha _{1},\alpha
_{2}\in \mathbb{R}\mathit{\ }$\textit{and} $0<\upsilon ,s\leq \infty $. 
\textit{Suppose that }$\alpha _{1}+\frac{n}{p}>0,1\leq p\leq q\leq \infty $.
Assume that%
\begin{equation*}
\max (\alpha _{1}-\gamma +\frac{n}{p}-\frac{n}{q},-\frac{n}{q})<\alpha
_{2}<\min \big(n-\frac{n}{q}-\gamma ,n-\frac{n}{q}\big)
\end{equation*}%
or%
\begin{equation*}
\alpha _{1}+\frac{n}{p}-\gamma >0,\alpha _{1}-\gamma +\frac{n}{p}-\frac{n}{q}%
=\alpha _{2}<\min \big(n-\frac{n}{q}-\gamma ,n-\frac{n}{q}\big)\quad \text{%
and}\quad s=\upsilon .
\end{equation*}%
\textit{Then there exists a positive constant }$c$\textit{\ independent of }$%
R$\textit{\ such that for all }$f\in \dot{K}_{q}^{\alpha _{2},\upsilon }$%
\textit{, we have} 
\begin{equation*}
\big\|\eta _{R,N}\ast (|\cdot |^{-\gamma }f)\big\|_{\dot{K}_{p}^{\alpha
_{1},s}}\leq c\text{ }R^{\frac{n}{q}-\frac{n}{p}+\alpha _{2}-\alpha
_{1}+\gamma }\big\|f\big\|_{\dot{K}_{q}^{\alpha _{2},\upsilon }}.
\end{equation*}
\end{lemma}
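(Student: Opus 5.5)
We follow the scheme of the proof of Lemma \ref{Bernstein-Herz-ine1}. Split the $\dot{K}_{p}^{\alpha_1,s}$ quasi-norm of $\eta_{R,N}\ast(|\cdot|^{-\gamma}f)$ into the low-annulus part $I_{1,R}$ ($2^k\le 1/R$) and the high-annulus part $I_{2,R}$ ($2^k>1/R$). By the embedding \eqref{herz-emb}, $\|f\|_{\dot K_q^{\alpha_2,\infty}}\lesssim\|f\|_{\dot K_q^{\alpha_2,\upsilon}}$, so in the strict case it suffices to bound everything by the $\infty$-norm. For $I_{1,R}$ we apply Lemma \ref{Key-est1} with $H=R/2$, exponent $q$, $\alpha=\alpha_2<n-\frac nq-\gamma$. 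This gives a pointwise bound $R^{\frac nq+\alpha_2+\gamma}\|f\|_{\dot K_q^{\alpha_2,\infty}}$ on $B(0,2/R)$. Then $\|\chi_k\|_p=2^{kn/p}$ and the geometric sum $\sum_{2^k\le 1/R}2^{k(\alpha_1+n/p)s}\sim R^{-(\alpha_1+n/p)s}$, which converges since $\alpha_1+\frac np>0$, yield exactly $R^{\frac nq-\frac np+\alpha_2-\alpha_1+\gamma}$.

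For $I_{2,R}$, with $x\in R_k$ and $2^kR>1$, we decompose $|\cdot|^{-\gamma}|f|=\sum_v|\cdot|^{-\gamma}|f|\chi_v$ into three pieces: $v\le k-4$, $|v-k|\le 3$, and $v\ge k+4$. The far pieces are handled as $J_{1,k}$ and $J_{3,k}$ in the previous proof. There $|x-y|\gtrsim 2^{\max(k,v)}$ gives decay $R^{n-N}2^{-\max(k,v)N}$. Hölder in $y$ on $R_v$ and the condition $\alpha_2<n-\frac nq-\gamma$ (to sum $v\le k$) give a pointwise bound $R^{n-N}2^{k(n-\frac nq-\alpha_2-\gamma-N)}\|f\|$. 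Multiplying by $2^{k(\alpha_1+n/p)}$ and summing over $2^k>1/R$ with $N$ large gives the right power of $R$.

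The main new step, and the one I expect to be the obstacle, is the near-diagonal piece $J_{2,k}$, where $p\le q$ prevents using Young's inequality to gain $R^{n/q-n/p}$ directly. Instead I use Hölder on $R_k$: $\|g\chi_k\|_p\le 2^{kn(\frac1p-\frac1q)}\|g\chi_k\|_q$. Combining this with $J_{2,k}\lesssim 2^{-k\gamma}\eta_{R,N}\ast(|f|\chi_{\widehat C_k})$ and Young's inequality ($\|\eta_{R,N}\|_1\lesssim1$) gives
\begin{equation*}
2^{k\alpha_1}\|J_{2,k}\chi_k\|_p\lesssim 2^{k(\alpha_1-\gamma+\frac np-\frac nq-\alpha_2)}\,2^{k\alpha_2}\|f\chi_{\widehat C_k}\|_q .
\end{equation*}
In the strict case $\alpha_2>\alpha_1-\gamma+\frac np-\frac nq$, the factor $2^{k(\cdot)}=(2^kR)^{(\cdot)}R^{-(\cdot)}$ has negative exponent. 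Summing over $2^kR>1$ then gives $R^{\frac nq-\frac np+\alpha_2-\alpha_1+\gamma}\sup_k 2^{k\alpha_2}\|f\chi_{\widehat C_k}\|_q$. In the endpoint case the exponent vanishes and the power of $R$ is $R^0$, which matches the claimed exponent. The sum is then directly bounded by $\|f\|_{\dot K_q^{\alpha_2,\upsilon}}$ with $s=\upsilon$, since $\widehat C_k$ overlaps boundedly many annuli.

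In the endpoint case the low part $I_{1,R}$ and the far parts need a $\upsilon$-summable rather than sup bound. I handle them by keeping the inner sums and applying a discrete Hardy-type inequality (Young's inequality for sequences on $\mathbb Z$ with geometric kernels). Its kernel is summable by $\alpha_1+\frac np-\gamma>0$ (small $v$ against small $k$) and by $\alpha_2<n-\frac nq-\gamma$.
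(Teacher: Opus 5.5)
Your proposal is correct and is essentially the paper's proof: the same split into $I_{1,R}$ and $I_{2,R}$, Lemma~\ref{Key-est1} on the low annuli, H\"older from $L^{q}$ to $L^{p}$ on $R_k$, and the $J_{1,k},J_{2,k},J_{3,k}$ decomposition. The only change is that you bound the near-diagonal piece by Young's inequality applied to $f\chi_{\widehat{C}_k}$ instead of invoking Lemma~\ref{Maximal-Inq}, which in fact makes the hypothesis $\alpha_2>-\frac{n}{q}$ unnecessary.

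In the endpoint case, which the paper dismisses ``by similarity'', your discrete Hardy step is superfluous. There the power of $R$ is zero, so your sup-norm bounds for $I_{1,R}$, $J_{1,k}$ and $J_{3,k}$ are already controlled by $\|f\|_{\dot{K}_{q}^{\alpha_2,\infty}}\le\|f\|_{\dot{K}_{q}^{\alpha_2,\upsilon}}$. Only $J_{2,k}$ requires the $\ell^{s}$ summation with $s=\upsilon$.
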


\begin{proof}
By similarity, we only consider the case $\alpha _{2}>\alpha _{1}-\gamma +%
\frac{n}{p}-\frac{n}{q}$ and assume that $0<s<\infty $. We employ the
notations $I_{1,R}$ and $I_{2,R}$ from \eqref{aux5}. The estimate of $%
I_{1,R} $ follows easily from Lemma \ref{Bernstein-Herz-ine1}. We only need
to estimate the part $I_{2,R}$. H\"{o}lder's inequality gives 
\begin{equation*}
\big\|(\eta _{R,N}\ast (|\cdot |^{-\gamma }f))\chi _{k}\big\|_{p}\leq c\text{
}2^{k(\frac{n}{p}-\frac{n}{q})}\big\|(\eta _{R,N}\ast (|\cdot |^{-\gamma
}f))\chi _{k}\big\|_{q}.
\end{equation*}%
where the positive constant $c$ is independent of $k$. Therefore, 
\begin{equation*}
I_{2,R}\lesssim \sum\limits_{k\in \mathbb{Z},2^{k}>\frac{1}{R}}2^{k(\frac{n}{%
p}-\frac{n}{q}-\alpha _{2}+\alpha _{1})s}2^{k\alpha _{2}s}\big\|(\eta
_{R,N}\ast (|\cdot |^{-\gamma }f))\chi _{k}\big\|_{q}^{r}.
\end{equation*}%
In view of the estimation of $V_{R,k}^{2}$ in Lemma \ref{Bernstein-Herz-ine1}%
we have%
\begin{eqnarray*}
\eta _{R,N}\ast (|\cdot |^{-\gamma }|f|)\chi _{k} &=&J_{1,k}\chi
_{k}+J_{2,k}\chi _{k}+J_{3,k}\chi _{k} \\
&\lesssim &2^{-k\gamma }\eta _{R,N}\ast |f|\chi _{k}+2^{-k(\gamma +N-n+\frac{%
n}{q}+\alpha _{2})}R^{n-N}\big\|f\big\|_{\dot{K}_{q}^{\alpha _{2},\infty }}.
\end{eqnarray*}
Observing that 
\begin{eqnarray*}
&&\sum\limits_{k\in \mathbb{Z},2^{k}>\frac{1}{R}}2^{k(\frac{n}{p}-\frac{n}{q}%
-\alpha _{2}+\alpha _{1}-\gamma )s}2^{k\alpha _{2}s}\big\|(\eta _{R,N}\ast
f)\chi _{k}\big\|_{q}^{s} \\
&\lesssim &R^{(\frac{n}{q}-\frac{n}{p}+\alpha _{2}-\alpha _{1}+\gamma )s}%
\big\|\eta _{R,N}\ast f\big\|_{\dot{K}_{q}^{\alpha _{2},\infty }}^{s} \\
&\lesssim &R^{(\frac{n}{q}-\frac{n}{p}+\alpha _{2}-\alpha _{1}+\gamma )s}%
\big\|f\big\|_{\dot{K}_{q}^{\alpha _{2},\infty }}^{s}
\end{eqnarray*}%
and%
\begin{eqnarray*}
&&R^{(n-N)s}\sum\limits_{k\in \mathbb{Z},2^{k}>\frac{1}{R}}2^{k(\frac{n}{p}-%
\frac{n}{q}-\alpha _{2}+\alpha _{1}-\gamma -N+n)s}\lesssim R^{(\frac{n}{q}-%
\frac{n}{p}+\alpha _{2}-\alpha _{1}+\gamma )s} \\
&\lesssim &R^{(\frac{n}{q}-\frac{n}{p}+\alpha _{2}-\alpha _{1}+\gamma )s}.
\end{eqnarray*}
Then%
\begin{equation*}
I_{2,R}\lesssim R^{(\frac{n}{q}-\frac{n}{p}+\alpha _{2}-\alpha _{1}+\gamma
)s}\big\|f\big\|_{\dot{K}_{q}^{\alpha _{2},\upsilon }}^{s}.
\end{equation*}%
The proof is complete.
\end{proof}

\begin{lemma}
\label{Bernstein-Herz-ine2 copy(1)}\textit{Let }$R>0,0<\gamma <n,\alpha \in 
\mathbb{R}\mathit{\ }$\textit{and} $1\leq s\leq \infty $. Assume that $1\leq
p\leq q\leq \infty $ and%
\begin{equation*}
-\frac{n}{p}<\alpha <n-\frac{n}{p}.
\end{equation*}%
\textit{Then there exists a positive constant }$c$\textit{\ independent of }$%
R$\textit{\ such that for all }$f\in \dot{K}_{q}^{n-\frac{n}{q}-\gamma ,1}$%
\textit{, we have} 
\begin{equation*}
\big\|\eta _{R,N}\ast (|\cdot |^{-\gamma }f)\big\|_{\dot{K}_{p}^{\alpha
,s}}\leq c\text{ }R^{n-\frac{n}{p}-\alpha }\big\|f\big\|_{\dot{K}_{q}^{n-%
\frac{n}{q}-\gamma ,1}}.
\end{equation*}%
in addition%
\begin{equation*}
\big\|\eta _{R,N}\ast (|\cdot |^{-\gamma }f)\big\|_{\dot{K}_{p}^{n-\frac{n}{p%
},1}}\leq c\text{ }\big\|f\big\|_{\dot{K}_{q}^{n-\frac{n}{q}-\gamma ,1}}.
\end{equation*}
\end{lemma}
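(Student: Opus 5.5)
The plan is to follow the scheme of Lemma \ref{Bernstein-Herz-ine2}, splitting the $\dot{K}_{p}^{\alpha ,s}$ norm as in \eqref{aux5} into the low part $I_{1,R}$ (annuli with $2^{k}\leq 1/R$) and the high part $I_{2,R}$ (annuli with $2^{k}>1/R$).

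\emph{Low part.} Lemma \ref{Key-est1} applies in the endpoint case $\alpha _{2}=n-\frac{n}{q}-\gamma$, $s=1$, with $H=R/2$. It gives
\begin{equation*}
\sup_{B(0,2/R)}\eta _{R,N}\ast (|\cdot |^{-\gamma }|f|)\lesssim R^{n}\big\|f\big\|_{\dot{K}_{q}^{n-\frac{n}{q}-\gamma ,1}},
\end{equation*}
because the exponent $\frac{n}{q}+\alpha _{2}+\gamma$ equals $n$. Multiplying by $2^{k(\alpha +\frac{n}{p})}$ and summing a geometric series over $2^{k}\leq 1/R$ produces $R^{n-\frac{n}{p}-\alpha }$; this uses $\alpha +\frac{n}{p}>0$. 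For the second inequality, with weight $n-\frac{n}{p}$, the same computation gives $R^{n}\cdot R^{-n}=1$, as required.

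\emph{High part.} By H\"{o}lder on each annulus, $\Vert g\chi _{k}\Vert _{p}\lesssim 2^{k(\frac{n}{p}-\frac{n}{q})}\Vert g\chi _{k}\Vert _{q}$, so it suffices to control $\Vert \eta _{R,N}\ast (|\cdot |^{-\gamma }|f|)\chi _{k}\Vert _{q}$. Decompose $f$ into the pieces $J_{1,k},J_{2,k},J_{3,k}$ from Lemma \ref{Bernstein-Herz-ine1}.
\begin{itemize}
\item For $J_{1,k}$ (inner region), the decay of $\eta _{R,N}$ together with H\"{o}lder gives $J_{1,k}\lesssim R^{n-N}2^{-kN}\sum_{v\leq k}2^{v(n-\frac{n}{q}-\gamma )}\Vert f\chi _{v}\Vert _{q}$. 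This is directly bounded by $R^{n-N}2^{-kN}\Vert f\Vert _{\dot{K}_{q}^{n-\frac{n}{q}-\gamma ,1}}$. No geometric sum in $v$ is needed; this is exactly where $\ell ^{1}$ in $k$ replaces the strict inequality used in Lemma \ref{Bernstein-Herz-ine1}.
\item For $J_{3,k}$ (outer region), the same bound gives $2^{-kN}R^{n-N}\Vert f\Vert$ up to harmless powers.
\item After multiplying by $2^{k(\alpha +\frac{n}{p})}2^{k\frac{n}{q}}$ and summing over $2^{k}>1/R$ with $N$ large, both terms give $R^{n-\frac{n}{p}-\alpha }$.
\end{itemize}

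\emph{Main obstacle: the diagonal piece $J_{2,k}$.} We have $J_{2,k}\lesssim 2^{-k\gamma }\eta _{R,N}\ast (|f|\chi _{\widehat{C}_{k}})$. Young's inequality in $L^{q}\to L^{q}$, with $\Vert \eta _{R,N}\Vert _{1}\lesssim 1$, bounds this by $2^{-k\gamma }\Vert f\chi _{\widehat{C}_{k}}\Vert _{q}$. The high-part contribution is therefore
\begin{equation*}
\sum_{2^{k}>1/R}2^{k(\alpha +\frac{n}{p}-n)}\,2^{k(n-\frac{n}{q}-\gamma )}\Vert f\chi _{\widehat{C}_{k}}\Vert _{q}.
\end{equation*}
Since $\alpha <n-\frac{n}{p}$, the factor $2^{k(\alpha +\frac{n}{p}-n)}$ is at most $R^{n-\frac{n}{p}-\alpha }$ for $2^{k}>1/R$. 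The remaining sum is bounded by the $\dot{K}_{q}^{n-\frac{n}{q}-\gamma ,1}$ norm, and $\ell ^{1}\hookrightarrow \ell ^{s}$ by \eqref{herz-emb} handles general $s\geq 1$.

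\emph{Endpoint $\alpha =n-\frac{n}{p}$ with $s=1$.} Here there is no decay in $k$, so the estimate above must be summed exactly in $\ell ^{1}$. This is why the second inequality requires the $\dot{K}^{\cdot ,1}$ norm on both sides and carries no power of $R$. In that case the low part is also summed in $\ell ^{1}$ using the uniform sup bound above. I expect this endpoint bookkeeping, and making Lemma \ref{Maximal-Inq} unnecessary by using Young's inequality on the localized pieces, to be the delicate step.
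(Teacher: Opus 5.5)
Your proposal is correct. It follows the scheme the paper indicates, since the paper's proof is only ``the method of Lemma~\ref{Bernstein-Herz-ine2} together with the endpoint version of Lemma~\ref{Bernstein-Herz-ine1}''. You split at $2^k\approx 1/R$ and control the low annuli by the sup bound of Lemma~\ref{Key-est1} at the endpoint $\alpha_2=n-\frac nq-\gamma$, $s=1$. You control the high annuli by H\"older on annuli ($p\le q$) and the decomposition into $J_{1,k},J_{2,k},J_{3,k}$.

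The one genuine difference is the diagonal piece. The paper bounds $J_{2,k}\lesssim 2^{-k\gamma}\eta_{R,N}\ast|f|$ without localizing $f$, and then invokes Lemma~\ref{Maximal-Inq} in $\dot K_q^{n-\frac nq-\gamma,\cdot}$. That is exactly where the hypothesis $0<\gamma<n$ enters, since it is equivalent to $-\frac nq<n-\frac nq-\gamma<n-\frac nq$. You instead localize $f$ to $\widehat C_k$ first and apply Young's inequality with $\|\eta_{R,N}\|_1\lesssim 1$.

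Your version is more elementary. It avoids Lemma~\ref{Maximal-Inq}, whose proof the paper only gives by reference to another work. It also shows the restriction on $\gamma$ is superfluous: the estimate is really a statement about $g=|\cdot|^{-\gamma}f\in\dot K_q^{n-\frac nq,1}$, and your argument works for every real $\gamma$. The paper's route has the advantage of reusing verbatim the machinery already set up in Lemmas~\ref{Bernstein-Herz-ine1} and~\ref{Bernstein-Herz-ine2}.

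One small bookkeeping slip does not affect the conclusion, since $N$ is large. The net factor multiplying $\sup_{R_k}J_{1,k}$ and $\sup_{R_k}J_{3,k}$ is $2^{k\alpha}\cdot 2^{k(\frac np-\frac nq)}\cdot 2^{\frac{kn}{q}}=2^{k(\alpha+\frac np)}$, not $2^{k(\alpha+\frac np)}2^{\frac{kn}{q}}$.
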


\begin{proof}
The proof follows the method of Lemma \ref{Bernstein-Herz-ine2}, together
with Lemma \ref{Bernstein-Herz-ine1 copy(1)}.
\end{proof}

\section{Local well-posedness}

We study equation \eqref{HH-equation}\ through its corresponding integral
formulation. A function $u$ is called a mild solution of \eqref{HH-equation}
if it satisfies 
\begin{equation}
u(t)=e^{t\Delta }u_{0}+a\int_{0}^{t}e^{(t-\tau )\Delta }\bigl(|\cdot
|^{-\gamma }|u(\tau )|^{\beta }u(\tau )\bigr)d\tau .  \label{int-equa1}
\end{equation}

Throughout this paper, for $\beta >0$, $\alpha ,a\in \mathbb{R}$, and $%
\gamma <2-\alpha \beta $, we define the critical exponent by 
\begin{equation}
q_{c}=\frac{n\beta }{2-\alpha \beta -\gamma }.  \label{criticalnumber}
\end{equation}%
The exponent $q_{c}$\ plays a fundamental role in our analysis. Accordingly,
we classify the exponent $q$ as subcritical, critical, or supercritical
according to whether $1\leq q<q_{c},q=q_{c},$ \text{or} $q>q_{c},$
respectively. Notice that \eqref{HH-equation} in Lebesgue spaces was studied
in \cite{BTW17} and in Lebesgue spaces with power weighted; see \cite{CIT22}%
. In weighted Lorentz spaces we refer to \cite{CITT26}. All the results in
this section generalize the corresponding results obtained in \cite{BTW17}.

We are now in a position to state the first main result of this paper.

\begin{theorem}
	\label{result1}
	Let $n\geq1$ be an integer, $\alpha\in\mathbb{R}$, $1\leq r\leq\infty$,
	$\beta>0$, and let $\gamma$ satisfy
	\begin{equation*}
	-\alpha\beta\leq\gamma<\min(n,2-\alpha\beta).
	\end{equation*}
	Let $1\leq q<\infty$ be such that
	\begin{equation*}
	-\frac{\alpha}{n}<\frac{1}{q}
	<
	\min\left(
	\frac{n-\gamma}{n(\beta+1)}-\frac{\alpha}{n},
	\frac{1}{\beta+1}-\frac{\alpha}{n},
	\frac{1}{\beta+1}
	\right).
	\end{equation*}
	Let $q_c$ be defined by \eqref{criticalnumber}, and assume that $q>q_c$.
	Then the integral equation \eqref{int-equa1} is locally well-posed in
	$\dot{K}_{q}^{\alpha,r}$. More precisely, for every
	$u_0\in\dot{K}_{q}^{\alpha,r}$, there exist a time $T>0$ and a unique
	solution
	\[
	u\in C([0,T],\dot{K}_{q}^{\alpha,r})
	\]
	of \eqref{int-equa1}. Furthermore, this solution can be extended to a
	maximal interval of existence $[0,T_{\max})$, where either
	$T_{\max}=\infty$, or $T_{\max}<\infty$ and
	\[
	\lim_{t\to T_{\max}}
	\|u(t)\|_{\dot{K}_{q}^{\alpha,r}}
	=\infty.
	\]
\end{theorem}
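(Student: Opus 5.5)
The plan is a contraction mapping argument in $X_T=C([0,T],\dot{K}_{q}^{\alpha,r})$ with norm $\sup_{0\le t\le T}\|u(t)\|_{\dot{K}_{q}^{\alpha,r}}$, applied to
\[
\Phi(u)(t)=e^{t\Delta}u_0+a\int_0^t e^{(t-\tau)\Delta}\bigl(|\cdot|^{-\gamma}|u(\tau)|^{\beta}u(\tau)\bigr)\,d\tau .
\]
We work in the closed ball of radius $2M$, with $M\simeq\|u_0\|_{\dot{K}_{q}^{\alpha,r}}$.

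\textbf{Linear part.} The heat kernel satisfies $|G_t|\lesssim \eta_{R,N}$ with $R=t^{-1/2}$ for every $N$. Since $1\le q<\infty$ and the hypothesis gives $-\frac{n}{q}<\alpha<n-\frac{n}{q}$ (the lower bound from $-\frac{\alpha}{n}<\frac1q$, the upper from $\frac1q<\frac1{\beta+1}$), Lemma \ref{Maximal-Inq} yields
\[
\|e^{t\Delta}u_0\|_{\dot{K}_{q}^{\alpha,r}}\le C\|u_0\|_{\dot{K}_{q}^{\alpha,r}}
\]
uniformly in $t$. Continuity at $t=0$ follows from density of $C_c^\infty(\mathbb{R}^n\setminus\{0\})$ in $\dot{K}_q^{\alpha,r}$ when $r<\infty$. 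For $r=\infty$ one must either interpret continuity in the weak-$*$ sense or restrict to the closure of test functions; I would flag this and treat $r<\infty$ primarily.

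\textbf{Nonlinear estimate.} Set $f=|u|^\beta u$ and $g=|u|^\beta u-|v|^\beta v$. By Hölder on each annulus,
\[
\|f\|_{\dot{K}_{q/(\beta+1)}^{(\beta+1)\alpha,\,r/(\beta+1)}}\le \|u\|_{\dot{K}_q^{\alpha,r}}^{\beta+1},
\]
and $|g|\lesssim(|u|^\beta+|v|^\beta)|u-v|$ gives the analogous Lipschitz bound. This is the multiplicative case of Lemma \ref{interpolation2}; note $q/(\beta+1)\ge1$ because $\frac1q<\frac1{\beta+1}$. Next apply Lemma \ref{Bernstein-Herz-ine2} (the case $p\le q$) with target $(p,\alpha_1)=(q,\alpha)$, source $(q/(\beta+1),\alpha_2)$ with $\alpha_2=(\beta+1)\alpha$, and $R=(t-\tau)^{-1/2}$. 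Here the source Lebesgue index $q/(\beta+1)$ is at most the target index $q$, so we are in the setting of Lemma \ref{Bernstein-Herz-ine1} with its $q\to q/(\beta+1)$ and $p\to q$. Its hypotheses are:
\begin{itemize}
\item $\alpha+\frac nq>0$, which is assumed;
\item $\alpha_2<n-\frac{n(\beta+1)}{q}-\gamma$, equivalent to $\frac1q<\frac{n-\gamma}{n(\beta+1)}-\frac\alpha n$;
\item $\alpha_2<n-\frac{n(\beta+1)}q$, equivalent to $\frac1q<\frac1{\beta+1}-\frac\alpha n$;
\item $\alpha_2\ge\alpha-\gamma$, equivalent to $\alpha\beta+\gamma\ge0$;
\item $\alpha_2> -\frac{n(\beta+1)}{q}$, which follows from $\alpha>-\frac nq$.
\end{itemize}
This matches the theorem's hypotheses exactly. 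The resulting exponent of $R$ is
\[
\frac{n\beta}{q}+\alpha\beta+\gamma ,
\]
so
\[
\Bigl\|e^{(t-\tau)\Delta}|\cdot|^{-\gamma}f\Bigr\|_{\dot{K}_q^{\alpha,r}}\lesssim (t-\tau)^{-\frac12(\frac{n\beta}q+\alpha\beta+\gamma)}\|u\|^{\beta+1}.
\]
The condition $q>q_c$ is exactly $\frac{n\beta}{q}+\alpha\beta+\gamma<2$, so the time singularity is integrable. Integrating gives
\[
\|\Phi(u)-\Phi(v)\|_{X_T}\le C|a|\,T^{\delta}\,(\|u\|_{X_T}^\beta+\|v\|_{X_T}^\beta)\|u-v\|_{X_T},\qquad \delta=1-\tfrac12\bigl(\tfrac{n\beta}q+\alpha\beta+\gamma\bigr)>0,
\]
and choosing $T$ with $C|a|T^\delta(2M)^\beta\ll1$ makes $\Phi$ a contraction.

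\textbf{Main obstacle.} The delicate step is matching the $\theta$-index of Lemma \ref{Bernstein-Herz-ine1}. When $\alpha_2>\alpha-\gamma$ one needs only the $\dot K^{\alpha_2,\infty}$ source norm, which the $r/(\beta+1)$ norm dominates by \eqref{herz-emb}. In the endpoint $\gamma=-\alpha\beta$ the lemma requires $\theta=s=r$, but we only control the $r/(\beta+1)$ norm. Since $r/(\beta+1)\le r$, the embedding \eqref{herz-emb} again saves us, and I expect this bookkeeping to work. The lemma is stated for the power kernel $\eta_{R,N}$ with a dominating constant, and the heat kernel is pointwise bounded by it.

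\textbf{Uniqueness, continuity and blow-up.} Uniqueness in $C([0,T],\dot{K}_q^{\alpha,r})$ follows from the same Lipschitz estimate on a short interval, iterated. Time continuity of the Duhamel term follows by splitting the integral near $t$ and using the integrable singularity. Since $T$ depends only on $\|u_0\|_{\dot{K}_q^{\alpha,r}}$, restarting from $u(t_0)$ yields a maximal solution. If $T_{\max}<\infty$ while $\|u(t)\|$ stayed bounded along a sequence $t\to T_{\max}$, the solution could be extended past $T_{\max}$, a contradiction. Hence $\lim_{t\to T_{\max}}\|u(t)\|_{\dot{K}_q^{\alpha,r}}=\infty$, and indeed the standard lower bound $\|u(t)\|\gtrsim (T_{\max}-t)^{-\delta/\beta}$ holds.
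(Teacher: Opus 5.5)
Your proposal is correct and is essentially the paper's proof. It uses the same H\"older step followed by Lemma~\ref{Bernstein-Herz-ine1} (the relevant lemma here because the source index $q/(\beta+1)$ lies below the target index $q$, not Lemma~\ref{Bernstein-Herz-ine2} as you first wrote) to get the Lipschitz bound with weight $t^{-\frac12(\frac{n\beta}{q}+\alpha\beta+\gamma)}$, integrable exactly when $q>q_c$; the paper then feeds this into Weissler's abstract local theory, whereas you run the contraction, uniqueness and blow-up alternative by hand.

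Your caveat about $r=\infty$ is justified, and the paper passes over it. Take $u_0=|x|^{-\alpha-n/q}\in\dot{K}_{q}^{\alpha,\infty}$ and $t=4^{-j}$; scaling gives $\|e^{t\Delta}u_0-u_0\|_{\dot{K}_{q}^{\alpha,\infty}}=\|e^{\Delta}u_0-u_0\|_{\dot{K}_{q}^{\alpha,\infty}}$, which is positive because $e^{\Delta}u_0$ is bounded and $u_0$ is not. Since the Duhamel term tends to $0$ in norm as $t\to0$, the solution itself is not continuous at $t=0$ in that space.
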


\begin{proof}
	First, our assumptions on $\gamma$ and $q$ imply that
	\begin{equation*}
	\max \Big(\frac{\alpha-\gamma}{\beta+1},-\frac{n}{q}\Big)
	\leq \alpha
	<
	\min \Big(
	\frac{n-\gamma}{\beta+1}-\frac{n}{q},
	\frac{n}{\beta+1}-\frac{n}{q}
	\Big),
	\end{equation*}
	and, in particular,
	\[
	\alpha>-\frac{n}{q}.
	\]
	Indeed, these inequalities are consistent since
	\[
	-\alpha\beta\leq\gamma<\min(n,2-\alpha\beta),
	\]
	and
	\[
	\frac{n-\gamma}{\beta+1}-\alpha
	\leq
	\frac{n+\alpha\beta}{\beta+1}-\alpha
	=
	\frac{n-\alpha}{\beta+1}.
	\]
	Hence, Lemma~\ref{Bernstein-Herz-ine1} is applicable.
	
	For $t>0$, define
	\[
	\digamma_t(u)
	=
	e^{t\Delta}\big(|\cdot|^{-\gamma}|u|^{\beta}u\big).
	\]
	Using the embedding
	\[
	\dot{K}_{q}^{\alpha,\frac{r}{\beta+1}}
	\hookrightarrow
	\dot{K}_{q}^{\alpha,r}
	\]
	and Lemma~\ref{Bernstein-Herz-ine1}, we obtain
	\begin{align*}
	\|\digamma_t(u)-\digamma_t(v)\|_{\dot{K}_{q}^{\alpha,r}}
	&\lesssim
	\|\digamma_t(u)-\digamma_t(v)\|_{\dot{K}_{q}^{\alpha,\frac{r}{\beta+1}}}
	\\
	&\lesssim
	t^{-\frac12\left(\frac{n\beta}{q}+\alpha\beta+\gamma\right)}
	\bigl\|
	|u|^{\beta}u-|v|^{\beta}v
	\bigr\|_{\dot{K}_{\frac{q}{\beta+1}}^{\alpha(\beta+1),\frac{r}{\beta+1}}}.
	\end{align*}
	Using the elementary inequality
	\[
	\bigl||u|^{\beta}u-|v|^{\beta}v\bigr|
	\leq
	C\bigl(|u|^{\beta}+|v|^{\beta}\bigr)|u-v|,
	\]
	we deduce that
	\begin{align*}
	&
	\|\digamma_t(u)-\digamma_t(v)\|_{\dot{K}_{q}^{\alpha,r}}
	\\
	&\lesssim
	t^{-\frac12\left(\frac{n\beta}{q}+\alpha\beta+\gamma\right)}
	\bigl\|
	(|u|^{\beta}+|v|^{\beta})(u-v)
	\bigr\|_{\dot{K}_{\frac{q}{\beta+1}}^{\alpha(\beta+1),\frac{r}{\beta+1}}}
	\\
	&\lesssim
	t^{-\frac12\left(\frac{n\beta}{q}+\alpha\beta+\gamma\right)}
	\bigl\|
	|u|^{\beta}+|v|^{\beta}
	\bigr\|_{\dot{K}_{\frac{q}{\beta}}^{\alpha\beta,\frac{r}{\beta}}}
	\|u-v\|_{\dot{K}_{q}^{\alpha,r}}
	\\
	&\lesssim
	t^{-\frac12\left(\frac{n\beta}{q}+\alpha\beta+\gamma\right)}
	\left(
	\|u\|_{\dot{K}_{q}^{\alpha,r}}^{\beta}
	+
	\|v\|_{\dot{K}_{q}^{\alpha,r}}^{\beta}
	\right)
	\|u-v\|_{\dot{K}_{q}^{\alpha,r}}
	\\
	&\lesssim
	M^{\beta}
	t^{-\frac12\left(\frac{n\beta}{q}+\alpha\beta+\gamma\right)}
	\|u-v\|_{\dot{K}_{q}^{\alpha,r}},
	\end{align*}
	whenever
	\[
	\|u\|_{\dot{K}_{q}^{\alpha,r}},
	\,
	\|v\|_{\dot{K}_{q}^{\alpha,r}}
	\leq M.
	\]
	Here, the second inequality follows from H\"{o}lder's inequality, since
	\[
	\frac{\beta+1}{q}
	=
	\frac{\beta}{q}
	+
	\frac{1}{q}.
	\]
	
	Since
	\[
	q>q_c=\frac{n\beta}{2-\alpha\beta-\gamma},
	\]
	it follows that
	\[
	t^{-\frac12\left(\frac{n\beta}{q}+\alpha\beta+\gamma\right)}
	\in L_{\mathrm{loc}}^{1}(0,\infty).
	\]
	Moreover,
	\[
	\|\digamma_t(0)\|_{\dot{K}_{q}^{\alpha,r}}=0,
	\]
	and the semigroup property gives
	\[
	e^{s\Delta}\digamma_t=\digamma_{t+s},
	\qquad s,t>0.
	\]
	Therefore, all the hypotheses of \cite[Theorem~1, p.~279]{We80} are satisfied, and the conclusion follows.
\end{proof}

\begin{remark}
	When $\alpha=0$, Theorem~\ref{result1} reduces to the corresponding result established in \cite{BTW17}.
\end{remark}

We now consider the case \(q=\infty\) in Theorem~\ref{result1}.

\begin{theorem}
	\label{result1 copy(1)}
	Let $n\ge1$ be an integer, $1\le r\le\infty$, $\gamma<n$, and $\beta>0$. Let
	$\alpha>0$ satisfy
	\[
	\frac{-\gamma}{\beta}
	\le
	\alpha
	<
	\min\Big(
	\frac{2-\gamma}{\beta},
	\frac{n-\gamma}{\beta+1},
	\frac{n}{\beta+1}
	\Big).
	\]
	Then \eqref{int-equa1} is locally well posed in
	$\dot{K}_{\infty}^{\alpha,r}$. More precisely, for every
	$u_{0}\in\dot{K}_{\infty}^{\alpha,r}$, there exists $T>0$ such that
	\eqref{int-equa1} admits a unique solution
	\[
	u\in C([0,T],\dot{K}_{\infty}^{\alpha,r}).
	\]
	Moreover, the solution can be extended to a maximal interval of existence
	$[0,T_{\max})$, where either $T_{\max}=\infty$, or
	$T_{\max}<\infty$ and
	\[
	\lim_{t\to T_{\max}}
	\|u(t)\|_{\dot{K}_{\infty}^{\alpha,r}}
	=\infty.
	\]
\end{theorem}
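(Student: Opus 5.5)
I will follow the proof of Theorem~\ref{result1} verbatim with $q=\infty$, and reduce everything to the abstract result \cite[Theorem~1, p.~279]{We80}. For $t>0$ set
\[
\digamma_t(u)=e^{t\Delta}\big(|\cdot|^{-\gamma}|u|^{\beta}u\big).
\]
The goal is a Lipschitz estimate of the form
\[
\|\digamma_t(u)-\digamma_t(v)\|_{\dot{K}_{\infty}^{\alpha,r}}
\lesssim M^{\beta}\,t^{-\frac12(\alpha\beta+\gamma)}\|u-v\|_{\dot{K}_{\infty}^{\alpha,r}}
\]
whenever $\|u\|,\|v\|\le M$. The hypothesis $\alpha<\frac{2-\gamma}{\beta}$ is exactly $\frac12(\alpha\beta+\gamma)<1$, so the time weight is in $L^1_{\mathrm{loc}}(0,\infty)$. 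The hypothesis $\alpha\ge-\gamma/\beta$ gives $\alpha\beta+\gamma\ge0$, so the weight is a nonpositive power of $t$. Together with $\digamma_t(0)=0$ and the semigroup identity $e^{s\Delta}\digamma_t=\digamma_{t+s}$, Weissler's hypotheses are met. That yields existence, uniqueness, continuity into $\dot{K}_{\infty}^{\alpha,r}$, and the blow-up alternative.

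The key step is the smoothing estimate. I bound the heat kernel pointwise by $c\,\eta_{R,N}$ with $R=t^{-1/2}$; this is the same device implicitly used in Theorem~\ref{result1}. First use the embedding \eqref{herz-emb}, $\dot{K}_{\infty}^{\alpha,\frac{r}{\beta+1}}\hookrightarrow\dot{K}_{\infty}^{\alpha,r}$. Then apply Lemma~\ref{Bernstein-Herz-ine1} with $p=q=\infty$, $\alpha_1=\alpha$, $\alpha_2=\alpha(\beta+1)$ and $s=\theta=\frac{r}{\beta+1}$. The resulting power of $R$ is
\[
\alpha_2-\alpha_1+\gamma=\alpha\beta+\gamma,
\]
which gives $t^{-\frac12(\alpha\beta+\gamma)}$. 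The lemma's hypotheses must be checked with $n/p=n/q=0$:
\begin{itemize}
\item $\alpha_1+\frac np>0$ is $\alpha>0$, which is assumed.
\item $\max(\alpha-\gamma,0)\le\alpha(\beta+1)$ follows from $\alpha\beta+\gamma\ge0$ and $\alpha>0$.
\item $\alpha(\beta+1)<\min(n-\gamma,n)$ is the remaining assumption.
\item $\alpha_2\neq0$ holds since $\alpha>0$.
\end{itemize}
When $\alpha\beta+\gamma=0$ we are in the case $\theta=s$, and otherwise $\theta=\infty$. Since $\dot{K}^{\cdot,s}\hookrightarrow\dot{K}^{\cdot,\infty}$, the choice $\theta=s$ is harmless in both cases.

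The nonlinearity is handled exactly as before. The pointwise inequality
\[
\bigl||u|^\beta u-|v|^\beta v\bigr|\le C(|u|^\beta+|v|^\beta)|u-v|
\]
combined with H\"older's inequality in Herz spaces gives
\[
\|fg\|_{\dot{K}_{\infty}^{\alpha(\beta+1),\frac r{\beta+1}}}\le \|f\|_{\dot{K}_{\infty}^{\alpha\beta,\frac r\beta}}\|g\|_{\dot{K}_{\infty}^{\alpha,r}}.
\]
This is H\"older on each annulus in $L^\infty$, followed by H\"older in the $\ell^{q}$ sums, since $\frac{\beta+1}{r}=\frac\beta r+\frac1r$. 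Finally, $\||u|^\beta\|_{\dot{K}_{\infty}^{\alpha\beta,r/\beta}}=\|u\|^\beta_{\dot{K}_{\infty}^{\alpha,r}}$.

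The main obstacle I expect is the endpoint $p=q=\infty$ in Lemma~\ref{Bernstein-Herz-ine1}. Its proof invokes Lemma~\ref{Maximal-Inq} with $-\frac nq<\alpha_2<n-\frac nq$, which here means $0<\alpha(\beta+1)<n$; this is guaranteed by $\alpha>0$ and $\alpha<\frac n{\beta+1}$. This is precisely why the theorem requires $\alpha>0$ rather than allowing $\alpha\le0$. I would also verify that the kernel domination of $e^{t\Delta}$ by $\eta_{t^{-1/2},N}$ is uniform in $t$, so that all constants are independent of $t$.

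A secondary point is continuity of $t\mapsto e^{t\Delta}u_0$ at $t=0$ in $\dot{K}_{\infty}^{\alpha,r}$, which Weissler's framework needs. This can fail for $L^\infty$-based spaces, so I would either interpret continuity at $0$ in the weak sense Weissler allows, or restrict to $r<\infty$, where approximation by smooth compactly supported functions away from the origin works.
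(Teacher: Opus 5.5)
Your argument is the paper's proof. It uses the same embedding $\dot{K}_{\infty}^{\alpha,\frac{r}{\beta+1}}\hookrightarrow\dot{K}_{\infty}^{\alpha,r}$, Lemma~\ref{Bernstein-Herz-ine1} with $p=q=\infty$ and $\alpha_2=\alpha(\beta+1)$ (which gives $t^{-\frac12(\alpha\beta+\gamma)}$), H\"older's inequality, and Weissler's abstract theorem, and your check of the lemma's hypotheses is more explicit than the paper's.

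One correction to your final paragraph, on a point the paper does not address at all. Restricting to $r<\infty$ does not restore continuity at $t=0$. The obstruction sits in the inner $L^\infty$ norm on each annulus, not in the $\ell^r$ sum.
\begin{itemize}
\item Take $u_0=\chi_{\{1<|x|<3/2\}}$. It has $\|u_0\|_{\dot{K}_{\infty}^{\alpha,r}}=2^{\alpha}$ for every $r$.
\item Since $e^{t\Delta}u_0$ is continuous across the sphere $|x|=3/2$, where $u_0$ jumps from $1$ to $0$ inside $R_1$, you get $\|(e^{t\Delta}u_0-u_0)\chi_1\|_\infty\ge\frac12$.
\item Hence $\|e^{t\Delta}u_0-u_0\|_{\dot{K}_{\infty}^{\alpha,r}}\ge 2^{\alpha-1}$ for all $t>0$.
\item Your own estimate bounds the Duhamel term by $CM^{\beta+1}t^{1-\frac12(\alpha\beta+\gamma)}\to 0$.
\end{itemize}
Therefore no solution $u\in C([0,T],\dot{K}_{\infty}^{\alpha,r})$ with $u(0)=u_0$ exists for this datum, even when $a=0$ and whatever $r$ is.

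So the continuity claim at $t=0$ in the statement has to be changed. One option is to weaken it, e.g.\ to continuity on $(0,T]$ with convergence to $u_0$ in $\mathcal{S}'(\mathbb{R}^n)$; this needs a direct contraction argument rather than a $C_0$-semigroup theorem. The other is to replace $\dot{K}_{\infty}^{\alpha,r}$ by a closed subspace on which $e^{t\Delta}$ is strongly continuous, as one replaces $L^\infty$ by $C_0$ in the classical setting.
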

\begin{proof}
	By the assumption on $\alpha$, we have
	\[
	-\alpha\beta \leq \gamma < \min(n,\,2-\alpha\beta)
	\quad\text{and}\quad
	\alpha \geq \frac{\alpha-\gamma}{\beta+1}.
	\]
	Using the continuous embedding
	\[
	\dot{K}_{\infty}^{\alpha,\frac{r}{\beta+1}}
	\hookrightarrow
	\dot{K}_{\infty}^{\alpha,r}
	\]
	together with Lemma~\ref{Bernstein-Herz-ine1}, we obtain
	\begin{align*}
	\|\digamma_t(u)-\digamma_t(v)\|_{\dot{K}_{\infty}^{\alpha,r}}
	&\lesssim
	\|\digamma_t(u)-\digamma_t(v)\|_{\dot{K}_{\infty}^{\alpha,\frac{r}{\beta+1}}}
	\\
	&\lesssim
	t^{-\frac12(\alpha\beta+\gamma)}
	\bigl\||u|^\beta u-|v|^\beta v\bigr\|_
	{\dot{K}_{\infty}^{\alpha(\beta+1),\frac{r}{\beta+1}}}
	\\
	&\lesssim
	t^{-\frac12(\alpha\beta+\gamma)}
	\left(
	\|u\|_{\dot{K}_{\infty}^{\alpha,r}}^\beta
	+
	\|v\|_{\dot{K}_{\infty}^{\alpha,r}}^\beta
	\right)
	\|u-v\|_{\dot{K}_{\infty}^{\alpha,r}},
	\end{align*}
	whenever
	\[
	\|u\|_{\dot{K}_{\infty}^{\alpha,r}}\le M
	\quad\text{and}\quad
	\|v\|_{\dot{K}_{\infty}^{\alpha,r}}\le M.
	\]
	
	Since $\gamma<2-\alpha\beta$, it follows that
	\[
	t^{-\frac12(\alpha\beta+\gamma)}
	\in L_{\mathrm{loc}}^1(0,\infty).
	\]
	Moreover,
	\[
	\|\digamma_t(0)\|_{\dot{K}_{\infty}^{\alpha,r}}=0,
	\]
	and the semigroup property
	\[
	e^{t\Delta}\digamma_s=\digamma_{t+s},
	\qquad s,t>0,
	\]
	holds. Therefore, the conclusion follows from
	\cite[Theorem~1, p.~279]{We80}.
\end{proof}

We now study the limiting case
\[
q=\frac{1}{\frac{n-\gamma}{n(\beta+1)}-\frac{\alpha}{n}}
\]
of Theorem~\ref{result1}.

\begin{theorem}
	\label{result2}
	Let $n\geq 1$ be an integer, $1\leq q<\infty$, $\beta>0$, and $\gamma$ satisfy
	\[
	0<\gamma<\min\bigl(n,\,2-(n-2)\beta\bigr).
	\]
	Assume that $q\geq\beta+1$. Then \eqref{int-equa1} is locally well-posed in
	\[
	\dot{K}_{q}^{\frac{n-\gamma}{\beta+1}-\frac{n}{q},\,1}.
	\]
	More precisely, for every
	\[
	u_{0}\in
	\dot{K}_{q}^{\frac{n-\gamma}{\beta+1}-\frac{n}{q},\,1},
	\]
	there exist $T>0$ and a unique solution
	\[
	u\in C ([0,T],\dot{K}_{q}^{\frac{n-\gamma}{\beta+1}-\frac{n}{q},\,1})
	\]
	of \eqref{int-equa1}. Moreover, the solution can be extended to a maximal interval
	$[0,T_{\max})$, where either
	\[
	T_{\max}=\infty,
	\]
	or
	\[
	T_{\max}<\infty
	\quad\text{and}\quad
	\lim_{t\to T_{\max}}
	\|u(t)\|_{\dot{K}_{q}^{\frac{n-\gamma}{\beta+1}-\frac{n}{q},\,1}}
	=\infty.
	\]
\end{theorem}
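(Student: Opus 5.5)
Set $\alpha=\frac{n-\gamma}{\beta+1}-\frac{n}{q}$. We argue as in Theorem~\ref{result1}: verify the hypotheses of \cite[Theorem~1, p.~279]{We80} for the family $\digamma_t(u)=e^{t\Delta}(|\cdot|^{-\gamma}|u|^\beta u)$ acting on $X=\dot K_q^{\alpha,1}$. The required properties are $\digamma_t(0)=0$, the semigroup identity $e^{s\Delta}\digamma_t=\digamma_{t+s}$, and a local Lipschitz bound
\begin{equation*}
\|\digamma_t(u)-\digamma_t(v)\|_{X}\lesssim t^{-\kappa}M^\beta\|u-v\|_X,\qquad \|u\|_X,\|v\|_X\le M,
\end{equation*}
with $t^{-\kappa}\in L^1_{\mathrm{loc}}(0,\infty)$. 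The first two properties are immediate, so everything reduces to the Lipschitz bound.

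For the Lipschitz bound, put $\tilde q=\frac{q}{\beta+1}\ge 1$; this is where $q\ge\beta+1$ is used. H\"older's inequality on each annulus $R_k$, combined with $\ell^1$ bounds on the sequences (using $\ell^1\cdot\ell^\infty\subset\ell^1$ and $\dot K^{\alpha,1}\hookrightarrow\dot K^{\alpha,\infty}$), gives
\begin{equation*}
\big\||u|^\beta u-|v|^\beta v\big\|_{\dot K_{\tilde q}^{\alpha(\beta+1),1}}\lesssim\big(\|u\|_X^\beta+\|v\|_X^\beta\big)\|u-v\|_X .
\end{equation*}
The key observation is that
\begin{equation*}
\alpha(\beta+1)=n-\gamma-\frac{n(\beta+1)}{q}=n-\frac{n}{\tilde q}-\gamma .
\end{equation*}
So the nonlinearity lies exactly in the endpoint space $\dot K_{\tilde q}^{n-\frac{n}{\tilde q}-\gamma,1}$ treated in Lemma~\ref{Bernstein-Herz-ine1 copy(1)}. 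Since $\tilde q\le q$, we apply the first estimate of that lemma with exponents $(\tilde q,q)$ in place of $(q,p)$, target weight $\alpha$, $s=1$, and $R=t^{-1/2}$. This requires writing the heat kernel as dominated by $\eta_{R,N}$ with $R\sim t^{-1/2}$, as in Theorem~\ref{result1}. The lemma yields the factor $R^{n-\frac{n}{q}-\alpha}$, hence
\begin{equation*}
\kappa=\frac12\Big(n-\frac nq-\alpha\Big)=\frac12\Big(n-\frac{n-\gamma}{\beta+1}\Big)=\frac{n\beta+\gamma}{2(\beta+1)} .
\end{equation*}
Local integrability requires $\kappa<1$, i.e.\ $n\beta+\gamma<2\beta+2$, i.e.\ $\gamma<2-(n-2)\beta$, which is precisely the hypothesis.

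It remains to check the hypotheses of Lemma~\ref{Bernstein-Herz-ine1 copy(1)}. The condition $0<\gamma<n$ is assumed. The condition $\alpha+\frac nq>0$ becomes $\frac{n-\gamma}{\beta+1}>0$, which holds. Finally we need
\begin{equation*}
\max\Big(\alpha-\gamma,-\frac{n}{\tilde q}\Big)<n-\frac{n}{\tilde q}-\gamma=\alpha(\beta+1).
\end{equation*}
The inequality $\alpha-\gamma<\alpha(\beta+1)$ means $\alpha\beta>-\gamma$, and the inequality $-\frac n{\tilde q}<n-\frac n{\tilde q}-\gamma$ means $\gamma<n$. The condition $\alpha\beta>-\gamma$ is the only genuinely delicate point, since $\alpha$ may be negative when $q$ is close to $\beta+1$. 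Substituting, it is equivalent to
\begin{equation*}
\frac{\beta(n-\gamma)}{\beta+1}-\frac{n\beta}{q}+\gamma>0 .
\end{equation*}
Because $q\ge\beta+1$ gives $\frac{n\beta}{q}\le\frac{n\beta}{\beta+1}$, the left side is at least $\gamma-\frac{\beta\gamma}{\beta+1}=\frac{\gamma}{\beta+1}>0$, using $\gamma>0$. So the condition holds, which explains the assumptions $\gamma>0$ and $q\ge\beta+1$.

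I expect the main obstacle to be correctly matching the endpoint lemma, in particular the Herz index $1$ on both sides. Lemma~\ref{Bernstein-Herz-ine1 copy(1)} loses summability only through the embedding into $\dot K_q^{\alpha,s}$, and we need $s=1$ to return to $X$. If the lemma with $s=1$ turns out not to apply directly, my fallback is the second, $R$-independent-type estimate of Lemma~\ref{Bernstein-Herz-ine2 copy(1)}, or splitting the annuli into $2^k\le t^{1/2}$ and $2^k>t^{1/2}$ as in the proof of Lemma~\ref{Bernstein-Herz-ine1}. Once the bound holds, \cite{We80} gives existence, uniqueness, continuity into $X$, and the blow-up alternative.
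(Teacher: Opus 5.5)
Your proposal is correct and follows essentially the same route as the paper: reduction to Weissler's abstract theorem, application of Lemma~\ref{Bernstein-Herz-ine1 copy(1)} with exponents $(\frac{q}{\beta+1},q)$ to obtain the factor $t^{-\frac{n\beta+\gamma}{2(\beta+1)}}$, and verification of $\alpha+\frac{n}{q}>0$ and $\alpha-\gamma<\alpha(\beta+1)$ from $q\ge\beta+1$ and $0<\gamma<n$. The only cosmetic difference is in the H\"older step: you take Herz index $1$ directly and use $\ell^\infty\cdot\ell^1\subset\ell^1$, whereas the paper routes the nonlinearity through the quasi-norm index $\frac{1}{\beta+1}$ and the space $\dot{K}_{q/\beta}^{\alpha\beta,1/\beta}$; your version is, if anything, slightly cleaner.
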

\begin{proof}
	First, the assumptions $q\geq \beta+1$, $\beta>0$, and
	\[
	0<\gamma<\min\bigl(n,2-(n-2)\beta\bigr)
	\]
	imply that
	\[
	\frac{n-\gamma}{\beta+1}-\frac{n}{q}-\gamma
	<
	n-\gamma-\frac{n(\beta+1)}{q},
	\]
	and
	\[
	\frac{n-\gamma}{\beta+1}-\frac{n}{q}
	>
	-\frac{n}{q}.
	\]
	Hence, all the assumptions of Lemma~\ref{Bernstein-Herz-ine1 copy(1)} are satisfied. Therefore,
	\begin{align}
	\|\digamma_t(u)-\digamma_t(v)\|_{\dot{K}_q^{\frac{n-\gamma}{\beta+1}-\frac{n}{q},1}}
	&\lesssim
	t^{-\frac{n\beta+\gamma}{2(\beta+1)}}
	\||u|^\beta u-|v|^\beta v\|_{
		\dot{K}_{\frac{q}{\beta+1}}^{
			\left(\frac{n-\gamma}{\beta+1}-\frac{n}{q}\right)(\beta+1),
			\frac{1}{\beta+1}}}
	\nonumber\\
	&\lesssim
	t^{-\frac{n\beta+\gamma}{2(\beta+1)}}
	\|(|u|^\beta+|v|^\beta)(u-v)\|_{
		\dot{K}_{\frac{q}{\beta+1}}^{
			\left(\frac{n-\gamma}{\beta+1}-\frac{n}{q}\right)(\beta+1),
			\frac{1}{\beta+1}}},
	\label{main5}
	\end{align}
	where we also used the embedding
	\[
	\dot{K}_q^{\frac{n-\gamma}{\beta+1}-\frac{n}{q},\frac1{\beta+1}}
	\hookrightarrow
	\dot{K}_q^{\frac{n-\gamma}{\beta+1}-\frac{n}{q},1}.
	\]
	
	Applying H\"older's inequality yields
	\begin{align*}
	\eqref{main5}
	&\lesssim
	t^{-\frac{n\beta+\gamma}{2(\beta+1)}}
	\||u|^\beta+|v|^\beta\|_{
		\dot{K}_{\frac{q}{\beta}}^{
			\left(\frac{n-\gamma}{\beta+1}-\frac{n}{q}\right)\beta,
			\frac1\beta}}
	\|u-v\|_{\dot{K}_q^{\frac{n-\gamma}{\beta+1}-\frac{n}{q},1}}
	\\
	&\lesssim
	t^{-\frac{n\beta+\gamma}{2(\beta+1)}}
	\left(
	\|u\|_{\dot{K}_q^{\frac{n-\gamma}{\beta+1}-\frac{n}{q},1}}^\beta
	+
	\|v\|_{\dot{K}_q^{\frac{n-\gamma}{\beta+1}-\frac{n}{q},1}}^\beta
	\right)
	\|u-v\|_{\dot{K}_q^{\frac{n-\gamma}{\beta+1}-\frac{n}{q},1}}.
	\end{align*}
	
	Assuming that
	\[
	\|u\|_{\dot{K}_q^{\frac{n-\gamma}{\beta+1}-\frac{n}{q},1}}
	\le M,
	\qquad
	\|v\|_{\dot{K}_q^{\frac{n-\gamma}{\beta+1}-\frac{n}{q},1}}
	\le M,
	\]
	we conclude that
	\[
	\|\digamma_t(u)-\digamma_t(v)\|_{\dot{K}_q^{\frac{n-\gamma}{\beta+1}-\frac{n}{q},1}}
	\lesssim
	M^\beta
	t^{-\frac{n\beta+\gamma}{2(\beta+1)}}
	\|u-v\|_{\dot{K}_q^{\frac{n-\gamma}{\beta+1}-\frac{n}{q},1}}.
	\]
	
	Moreover, since
	\[
	0<\gamma<2-(n-2)\beta,
	\]
	we have
	\[
	t^{-\frac{n\beta+\gamma}{2(\beta+1)}}
	\in L_{\mathrm{loc}}^{1}(0,\infty).
	\]
	Finally,
	\[
	\|\digamma_t(0)\|_{\dot{K}_q^{\frac{n-\gamma}{\beta+1}-\frac{n}{q},1}}=0,
	\]
	and the semigroup property
	\[
	e^{t\Delta}\digamma_s=\digamma_{t+s},
	\qquad t,s>0,
	\]
	holds. Therefore, the conclusion follows from
	\cite[Theorem~1, p.~279]{We80}.
\end{proof}

In the following, we study \eqref{int-equa1} in the homogeneous Herz space $\dot{K}_{q}^{\alpha,r}$ for the supercritical case $q>q_{c}$.

\begin{theorem}
	\label{result3}
	Let $n\geq 1$ be an integer, $1\leq q<\infty$, $1\leq r<\infty$,
	$\beta>0$, and $\gamma\in\mathbb{R}$ satisfy
	\[
	\gamma<\min(2,n).
	\]
	Let $\kappa\geq1$ be such that
	\begin{equation}
	\max\Big(\frac{1}{q}-\frac{2}{n(\beta+1)},\,
	\frac{1}{q(\beta+1)},\,
	\frac{1}{q}-\frac{2-\gamma}{n\beta}\Big)
	\leq
	\frac{1}{\kappa}
	<
	\min\Big(\frac{1}{q},\,
	\frac{n-\gamma}{n(\beta+1)}+\frac{\gamma}{n\beta},\,
	\frac{1}{\beta+1}+\frac{\gamma}{n\beta}\Big),
	\label{assu-result31}
	\end{equation}
	with
	\[
	\frac{1}{\kappa}>\frac{1}{q}-\frac{2}{n(\beta+1)}.
	\]
	Assume further that $\alpha\in\mathbb{R}$ satisfies
	$\alpha>-\frac{n}{\kappa}$ and
	\begin{equation}
	\max\Big(-\frac{\gamma}{\beta},-\frac{n}{\kappa}\Big)
	\leq
	\alpha
	<
	\min\Big(
	\frac{n-\gamma}{\beta+1}-\frac{n}{\kappa},\,
	\frac{n}{\beta+1}-\frac{n}{\kappa},\,
	\frac{2-\gamma}{\beta}-\frac{n}{q}
	\Big).
	\label{assu-result3}
	\end{equation}
	Then \eqref{int-equa1} is locally well posed in
	$\dot{K}_{q}^{\alpha,r}$, as in Theorem~\ref{result1}, except that
	uniqueness holds only in the class of continuous functions $u$ for which
	\[
	t^{\frac{n}{2q}-\frac{n}{2\kappa}}
	\|u(t)\|_{\dot{K}_{\kappa}^{\alpha,r}}
	\]
	remains bounded on $(0,T]$.
	
	Moreover, the solution can be extended to a maximal interval
	$[0,T_{\max})$ such that either $T_{\max}=\infty$, or
	$T_{\max}<\infty$ and
	\[
	\lim_{t\to T_{\max}}
	\|u(t)\|_{\dot{K}_{q}^{\alpha,r}}
	=\infty.
	\]
	Furthermore, the minimal existence time $T$ depends only on
	$\|u_{0}\|_{\dot{K}_{q}^{\alpha,r}}$.
\end{theorem}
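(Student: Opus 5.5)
This is the Weissler auxiliary-norm argument (Weissler's treatment of the case $q<n\beta/2$ for the Fujita equation, as adapted in \cite{BTW17}), transferred to Herz spaces. Set $\sigma=\frac{n}{2q}-\frac{n}{2\kappa}\ge0$. For $T>0$ and $M>0$ we work in the space $Y_T$ of functions $u\in C((0,T],\dot{K}_{\kappa}^{\alpha,r})$ with
\[
\|u\|_{Y_T}=\sup_{0<t\le T}t^{\sigma}\|u(t)\|_{\dot{K}_{\kappa}^{\alpha,r}}\le M,
\]
equipped with the metric induced by $\|\cdot\|_{Y_T}$. We solve \eqref{int-equa1} there by Banach's fixed point theorem, and only afterwards show that the solution lies in $C([0,T],\dot{K}_{q}^{\alpha,r})$.

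The linear estimates come first. Since $\kappa\ge q$ and $-\frac{n}{\kappa}<\alpha<\frac{n}{\beta+1}-\frac{n}{\kappa}\le n-\frac{n}{\kappa}$ (and also $\alpha>-\frac nq$), Lemma~\ref{Bernstein-Herz-ine2} with $\gamma=0$ and $\alpha_1=\alpha_2=\alpha$ applies. Together with the pointwise bound $|e^{t\Delta}f|\lesssim\eta_{t^{-1/2},N}\ast|f|$, it gives
\[
\|e^{t\Delta}u_0\|_{\dot{K}_{\kappa}^{\alpha,r}}\lesssim t^{-\sigma}\|u_0\|_{\dot{K}_{q}^{\alpha,r}}.
\]
Lemma~\ref{Maximal-Inq} gives $\|e^{t\Delta}u_0\|_{\dot{K}_{q}^{\alpha,r}}\lesssim\|u_0\|_{\dot{K}_{q}^{\alpha,r}}$. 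The key refinement is the smallness statement
\[
\lim_{t\to0}t^{\sigma}\|e^{t\Delta}u_0\|_{\dot{K}_{\kappa}^{\alpha,r}}=0 .
\]
To prove it, approximate $u_0$ in $\dot{K}_{q}^{\alpha,r}$ (here $r<\infty$ is used) by a function $\phi$ supported in finitely many annuli and bounded. For such $\phi$, $\|e^{t\Delta}\phi\|_{\dot{K}_{\kappa}^{\alpha,r}}\lesssim\|\phi\|_{\dot{K}_{\kappa}^{\alpha,r}}$ stays bounded, so $t^{\sigma}\|e^{t\Delta}\phi\|_{\dot{K}_{\kappa}^{\alpha,r}}\to0$ when $\sigma>0$. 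This is also why the minimal existence time depends on $u_0$ itself; the stated dependence only on the norm should come from the concrete constants, as in \cite{We80}.

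The nonlinear estimate in the auxiliary norm comes next. Write $\frac{\beta+1}{\kappa}=\frac1p$; the condition $\frac1\kappa<\frac{1}{\beta+1}$ gives $p\ge1$, and $\frac{1}{\kappa}\geq\frac{1}{q(\beta+1)}$ gives $p\le q$. Apply Lemma~\ref{Bernstein-Herz-ine1} with source space $\dot{K}_{\kappa/(\beta+1)}^{\alpha(\beta+1),r/(\beta+1)}$, target $\dot{K}_{\kappa}^{\alpha,r}$ (respectively $\dot{K}_{q}^{\alpha,r}$), and $R=t^{-1/2}$. Its hypotheses are exactly \eqref{assu-result3}, using $\alpha(\beta+1)\ge\alpha-\gamma$ (equivalent to $\alpha\beta\ge-\gamma$) and $\alpha(\beta+1)<n-\frac{n(\beta+1)}{\kappa}-\gamma$. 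Together with H\"older's inequality in Herz spaces and \eqref{herz-emb}, this yields
\[
\|\digamma_{t-\tau}(u)-\digamma_{t-\tau}(v)\|_{\dot{K}_{\kappa}^{\alpha,r}}\lesssim (t-\tau)^{-\frac12(\frac{n\beta}{\kappa}+\alpha\beta+\gamma)}\bigl(\|u\|^{\beta}+\|v\|^{\beta}\bigr)\|u-v\|,
\]
with norms in $\dot{K}_{\kappa}^{\alpha,r}$. The analogous bound into $\dot{K}_{q}^{\alpha,r}$ has exponent $-\frac12(\frac{n(\beta+1)}{\kappa}-\frac nq+\alpha\beta+\gamma)$. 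Inserting $\|u(\tau)\|\le M\tau^{-\sigma}$ produces the Beta integral
\[
\int_0^t(t-\tau)^{-a}\tau^{-(\beta+1)\sigma}\,d\tau=Ct^{1-a-(\beta+1)\sigma}.
\]
Integrability requires $a<1$ and $(\beta+1)\sigma<1$, which are the conditions $\frac1\kappa>\frac1q-\frac{2}{n(\beta+1)}$ and $\alpha<\frac{2-\gamma}{\beta}-\frac nq$ (combined with the lower bound on $\frac1\kappa$). The resulting power is $t^{-\sigma}\cdot t^{\frac12(2-\gamma-\alpha\beta-\frac{n\beta}{q})}$, whose exponent is positive precisely because $q>q_c$. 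Hence the map
\[
\Phi(u)=e^{t\Delta}u_0+a\int_0^t\digamma_{t-\tau}(u(\tau))\,d\tau
\]
is a contraction on the ball of $Y_T$ for small $T$, where in the critical-type situations smallness comes from the previous step.

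Finally, the same nonlinear estimate into $\dot{K}_{q}^{\alpha,r}$ shows that $u-e^{t\Delta}u_0$ is bounded in $\dot{K}_{q}^{\alpha,r}$ and tends to $0$ as $t\to0$. Continuity of $e^{t\Delta}u_0$ at $t=0$ in $\dot{K}_{q}^{\alpha,r}$ follows from density ($r<\infty$) and the uniform bound of Lemma~\ref{Maximal-Inq}, and continuity for $t>0$ follows from the semigroup property. Uniqueness in the class where $t^{\sigma}\|u(t)\|_{\dot{K}_{\kappa}^{\alpha,r}}$ is bounded comes from the same contraction estimate on short intervals, iterated. The maximal solution and the blow-up alternative follow by restarting at times $t_0$, since $T$ depends only on $\|u(t_0)\|_{\dot{K}_{q}^{\alpha,r}}$. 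I expect the main obstacle to be the exponent bookkeeping: checking that \eqref{assu-result31}–\eqref{assu-result3} are exactly what make Lemma~\ref{Bernstein-Herz-ine1}/\ref{Bernstein-Herz-ine2} applicable and the Beta integrals finite, together with the density argument giving the small-time smallness in Herz spaces.
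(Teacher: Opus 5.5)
This is essentially the paper's proof. Both use Banach's fixed point theorem with the auxiliary norm $t^{\delta}\|u(t)\|_{\dot{K}_{\kappa}^{\alpha,r}}$, where $\delta=\frac{n}{2q}-\frac{n}{2\kappa}$ (your $\sigma$), then Lemma~\ref{Bernstein-Herz-ine1} plus H\"older's inequality for the nonlinear term, and Beta integrals whose net power $1-\frac12\bigl(\frac{n\beta}{q}+\alpha\beta+\gamma\bigr)$ is positive because $q>q_c$. The one structural difference is the metric. The paper contracts in $\max\bigl(\sup_t\|u(t)\|_{\dot{K}_{q}^{\alpha,r}},\sup_t t^{\delta}\|u(t)\|_{\dot{K}_{\kappa}^{\alpha,r}}\bigr)$. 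You use the weighted norm alone on $C((0,T],\dot{K}_{\kappa}^{\alpha,r})$ and recover $C([0,T],\dot{K}_{q}^{\alpha,r})$ afterwards. Your version is slightly cleaner: uniqueness then needs only the weighted bound, and you avoid the paper's requirement $u\in C([0,T],\dot{K}_{\kappa}^{\alpha,r})$, which cannot hold when $u_0\notin\dot{K}_{\kappa}^{\alpha,r}$. Your density argument giving $t^{\delta}\|e^{t\Delta}u_0\|_{\dot{K}_{\kappa}^{\alpha,r}}\to0$ is superfluous here. Smallness comes from the positive power of $T$, and that is exactly why $T$ depends only on $\|u_0\|_{\dot{K}_{q}^{\alpha,r}}$, contrary to your side remark.

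Two corrections are needed.

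First, the smoothing estimate $\dot{K}_{q}^{\alpha,r}\to\dot{K}_{\kappa}^{\alpha,r}$ with $\kappa>q$ comes from Lemma~\ref{Bernstein-Herz-ine1} with $\gamma=0$, using $-\frac nq<\alpha<n-\frac nq$ and $\alpha+\frac n\kappa>0$. It does not come from Lemma~\ref{Bernstein-Herz-ine2}, which requires the target exponent to be at most the source exponent.

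Second, $\frac1\kappa<\frac{1}{\beta+1}$ is not among the hypotheses; only $\frac1\kappa<\frac{1}{\beta+1}+\frac{\gamma}{n\beta}$ is. For $\gamma>0$ the assumptions allow $\kappa<\beta+1$; for instance $n=\beta=q=1$, $\gamma=\frac12$, $\frac1\kappa=\frac{7}{10}$, $\alpha=-\frac{12}{25}$ satisfy all of them. In that case $|u|^{\beta}u$ need not be locally integrable for $u\in\dot{K}_{\kappa}^{\alpha,r}$. Also, Lemma~\ref{Bernstein-Herz-ine1} with source exponent $\frac{\kappa}{\beta+1}<1$ is not available, so your nonlinear estimate breaks down. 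The paper's proof takes exactly the same step without comment, so this gap is shared. Both arguments need the extra condition $\kappa\ge\beta+1$, which is automatic when $\gamma\le0$.
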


\begin{proof}
First observe that, since
\[
\alpha <\frac{2-\gamma}{\beta}-\frac{n}{q},
\]
we have $q>q_{c}$. Set
\[
\delta =\frac{n}{2q}-\frac{n}{2\kappa}.
\]
Let $M,\sigma,T>0$ and let $u_{0}\in \dot{K}_{q}^{\alpha,r}$ satisfy
\begin{equation}
\big\|u_{0}\big\|_{\dot{K}_{q}^{\alpha,r}}\leq \sigma,\quad
c_{2}M^{\beta}T^{1-\frac{1}{2}\left(\frac{n\beta}{q}+\alpha\beta+\gamma\right)}<1,
\label{assu1}
\end{equation}
and
\begin{equation}
\sigma+c_{2}M^{\beta+1}
T^{1-\frac{1}{2}\left(\frac{n\beta}{q}+\alpha\beta+\gamma\right)}
\leq M,
\label{assu2}
\end{equation}
where $c_{2}>0$ is a positive constant. The proof is based on the Banach contraction principle.

Define
\[
F_{M}=\Big\{
u\in C([0,T],\dot{K}_{q}^{\alpha,r})
\cap C([0,T],\dot{K}_{\kappa}^{\alpha,r})
:\,
\max\Big(
\sup_{t\in[0,T]}\|u(t)\|_{\dot{K}_{q}^{\alpha,r}},
\sup_{t\in[0,T]}t^{\delta}\|u(t)\|_{\dot{K}_{\kappa}^{\alpha,r}}
\Big)\leq M
\Big\}.
\]

Endow $F_{M}$ with the metric
\[
d(u,v)=
\max\Big(
\sup_{t\in[0,T]}
\|u(t)-v(t)\|_{\dot{K}_{q}^{\alpha,r}},
\sup_{t\in[0,T]}
t^{\delta}
\|u(t)-v(t)\|_{\dot{K}_{\kappa}^{\alpha,r}}
\Big).
\]

Then $(F_{M},d)$ is a complete metric space. We shall prove that the integral equation \eqref{int-equa1} admits a unique solution in $F_{M}$.

Let $u,v\in F_{M}$ and $u_{0}, v_{0}\in \dot{K}_{q}^{\alpha,r}$. Since $\kappa>q$ and
\[
\alpha<\frac{n}{\beta+1}-\frac{n}{q(\beta+1)}
\]
(see \eqref{assu-result3}), it follows that
\[
-\frac{n}{q}<\alpha<n-\frac{n}{q}.
\] 
By Lemma \ref{Bernstein-Herz-ine1}, we have
\begin{equation*}
\big\|e^{t\Delta}(u_{0}-v_{0})\big\|_{\dot{K}_{q}^{\alpha,r}}
\lesssim
\big\|u_{0}-v_{0}\big\|_{\dot{K}_{q}^{\alpha,r}}.
\end{equation*}

Define
\begin{equation*}
\Lambda(u)(t)
=
a\int_{0}^{t}
e^{(t-\tau)\Delta}
\big(|\cdot|^{-\gamma}|u(\tau)|^{\beta}u(\tau)\big)
\,d\tau.
\end{equation*}
Then
\[
\big\|\Lambda(u)(t)-\Lambda(v)(t)\big\|_{\dot{K}_{q}^{\alpha,r}}
\]
is bounded by
\begin{equation}
|a|
\int_{0}^{t}
\big\|
e^{(t-\tau)\Delta}
\big(
|\cdot|^{-\gamma}
\big(|u(\tau)|^{\beta}u(\tau)-|v(\tau)|^{\beta}v(\tau)\big)
\big)
\big\|_{\dot{K}_{q}^{\alpha,r}}
\,d\tau.
\label{int1}
\end{equation}

Using the continuous embedding
\[
\dot{K}_{q}^{\alpha,\frac{r}{\beta+1}}
\hookrightarrow
\dot{K}_{q}^{\alpha,r},
\]
Lemma \ref{Bernstein-Herz-ine1} with
\[
(\alpha_1,\alpha_2,p,q,s,\theta)
=
\left(
\alpha,
\alpha(\beta+1),
q,
\frac{\kappa}{\beta+1},
\frac{r}{\beta+1},
\frac{r}{\beta+1}
\right),
\]
(see \eqref{assu-result3}), and H\"{o}lder's inequality, we obtain
\begin{align*}
&
\big\|
e^{(t-\tau)\Delta}
\big(
|\cdot|^{-\gamma}
\big(|u(\tau)|^{\beta}u(\tau)-|v(\tau)|^{\beta}v(\tau)\big)
\big)
\big\|_{\dot{K}_{q}^{\alpha,r}}
\\
&\lesssim
(t-\tau)^{-\frac12
	\left(
	\frac{n(\beta+1)}{\kappa}
	-\frac{n}{q}
	+\alpha\beta+\gamma
	\right)}
\big\|
|u(\tau)|^{\beta}u(\tau)
-
|v(\tau)|^{\beta}v(\tau)
\big\|_{
	\dot{K}_{\frac{\kappa}{\beta+1}}^{
		\alpha(\beta+1),
		\frac{r}{\beta+1}
}}
\\
&\lesssim
(t-\tau)^{-\frac12
	\left(
	\frac{n(\beta+1)}{\kappa}
	-\frac{n}{q}
	+\alpha\beta+\gamma
	\right)}
\big\|
|u(\tau)|^{\beta}
+
|v(\tau)|^{\beta}
\big\|_{
	\dot{K}_{\frac{\kappa}{\beta}}^{
		\alpha\beta,
		\frac{r}{\beta}
}}
\,
\big\|
u(\tau)-v(\tau)
\big\|_{\dot{K}_{\kappa}^{\alpha,r}}
\\
&\le
2cM^{\beta}
(t-\tau)^{-\frac12
	\left(
	\frac{n(\beta+1)}{\kappa}
	-\frac{n}{q}
	+\alpha\beta+\gamma
	\right)}
\tau^{-\delta(\beta+1)}
\,d(u,v).
\end{align*}
Consequently, \eqref{int1} is bounded by
\begin{eqnarray*}
	&&
	2|a|cM^{\beta}
	\int_{0}^{t}
	(t-\tau)^{-\frac12
		\left(
		\frac{n(\beta+1)}{\kappa}
		-\frac{n}{q}
		+\alpha\beta+\gamma
		\right)}
	\tau^{-\delta(\beta+1)}
	\,d\tau\,
	d(u,v)
	\\
	&=&
	2|a|cM^{\beta}\vartheta\,
	t^{-\frac12
		\left(
		\frac{n(\beta+1)}{\kappa}
		-\frac{n}{q}
		+\alpha\beta+\gamma
		\right)
		-\delta(\beta+1)+1}
	\,d(u,v),
\end{eqnarray*}
where
\[
\vartheta
=
\int_{0}^{1}
(1-h)^{-\frac12
	\left(
	\frac{n(\beta+1)}{\kappa}
	-\frac{n}{q}
	+\alpha\beta+\gamma
	\right)}
h^{-\delta(\beta+1)}
\,dh.
\]

Since
\[
\delta=\frac{n}{2q}-\frac{n}{2\kappa},
\]
we have
\[
-\frac12\left(
\frac{n(\beta+1)}{\kappa}
-\frac{n}{q}
+\alpha\beta+\gamma
\right)
-\delta(\beta+1)+1
=
1-\frac12\left(
\frac{n\beta}{q}
+\alpha\beta+\gamma
\right).
\]
Hence,
\[
\eqref{int1}
\lesssim
2cM^{\beta}\vartheta\,
t^{1-\frac12\left(\frac{n\beta}{q}+\alpha\beta+\gamma\right)}
d(u,v).
\]

Since
\[
q>\frac{n\beta}{2-\alpha\beta-\gamma},
\]
it follows that
\[
1-\frac12\left(\frac{n\beta}{q}+\alpha\beta+\gamma\right)>0.
\]
Moreover,
\[
\delta(\beta+1)<1,
\]
because
\[
\frac1q-\frac{2}{n(\beta+1)}
<
\frac1\kappa,
\]
by \eqref{assu-result31}. Furthermore,
\begin{eqnarray*}
	\frac12\left(
	\frac{n(\beta+1)}{\kappa}
	-\frac{n}{q}
	+\alpha\beta+\gamma
	\right)
	&<&
	\frac12\left(
	\frac{n(\beta+1)}{\kappa}
	-\frac{n}{\kappa}
	+\alpha\beta+\gamma
	\right)
	\\
	&=&
	\frac12\left(
	\frac{n\beta}{\kappa}
	+\alpha\beta+\gamma
	\right)
	<1,
\end{eqnarray*}
since
\[
\kappa>q>\frac{n\beta}{2-\alpha\beta-\gamma}.
\]
Therefore,
\begin{equation}
\|\Lambda(u)(t)-\Lambda(v)(t)\|_{\dot K_q^{\alpha,r}}
\lesssim
\|u_0-v_0\|_{\dot K_q^{\alpha,r}}
+
c_1M^{\beta}
T^{1-\frac12\left(\frac{n\beta}{q}+\alpha\beta+\gamma\right)}
\,d(u,v),
\label{main-result31}
\end{equation}
where the constant $c_1>0$ absorbs $2|a|c\vartheta$.
Using Lemma \ref{Bernstein-Herz-ine1} once again, we obtain
\begin{eqnarray*}
	\big\|e^{t\Delta }(u_{0}-v_{0})\big\|_{\dot{K}_{\kappa }^{\alpha ,r}}
	&\lesssim&
	t^{-\frac12\left(\frac{n}{q}-\frac{n}{\kappa}\right)}
	\big\|u_{0}-v_{0}\big\|_{\dot{K}_{q}^{\alpha ,r}}
	\\
	&=&
	ct^{-\delta}
	\big\|u_{0}-v_{0}\big\|_{\dot{K}_{q}^{\alpha ,r}},
\end{eqnarray*}
where $\delta=\frac{n}{2q}-\frac{n}{2\kappa}$, since
$\kappa>q$, $-\frac{n}{q}<\alpha<n-\frac{n}{q}$, and
$\alpha>-\frac{n}{\kappa}$.

Next,
\[
\big\|\Lambda(u)(t)-\Lambda(v)(t)\big\|_{\dot{K}_{\kappa}^{\alpha,r}}
\]
is bounded by
\begin{equation}
|a|
\int_{0}^{t}
\big\|
e^{(t-\tau)\Delta}
\big(
|\cdot|^{-\gamma}
(|u(\tau)|^{\beta}u(\tau)-|v(\tau)|^{\beta}v(\tau))
\big)
\big\|_{\dot{K}_{\kappa}^{\alpha,r}}
\,d\tau .
\label{int2}
\end{equation}

Applying Lemma \ref{Bernstein-Herz-ine1} with
\[
(\alpha_1,\alpha_2,p,q,s,\theta)
=
\left(
\alpha,
\alpha(\beta+1),
\kappa,
\frac{\kappa}{\beta+1},
\frac{r}{\beta+1},
\frac{r}{\beta+1}
\right),
\]
we obtain
\begin{eqnarray*}
	&&
	\big\|
	e^{(t-\tau)\Delta}
	\big(
	|\cdot|^{-\gamma}
	(|u(\tau)|^{\beta}u(\tau)-|v(\tau)|^{\beta}v(\tau))
	\big)
	\big\|_{\dot{K}_{\kappa}^{\alpha,r}}
	\\
	&\lesssim&
	(t-\tau)^{-\frac12\left(\frac{n\beta}{\kappa}+\alpha\beta+\gamma\right)}
	\big\|
	|u(\tau)|^{\beta}u(\tau)-|v(\tau)|^{\beta}v(\tau)
	\big\|_{\dot{K}_{\frac{\kappa}{\beta+1}}^{\alpha(\beta+1),\frac{r}{\beta+1}}}
	\\
	&\lesssim&
	(t-\tau)^{-\frac12\left(\frac{n\beta}{\kappa}+\alpha\beta+\gamma\right)}
	\big\|
	|u|^{\beta}+|v|^{\beta}
	\big\|_{\dot{K}_{\frac{\kappa}{\beta}}^{\alpha\beta,\frac{r}{\beta}}}
	\|u-v\|_{\dot{K}_{\kappa}^{\alpha,r}}
	\\
	&\le&
	2cM^{\beta}
	(t-\tau)^{-\frac12\left(\frac{n\beta}{\kappa}+\alpha\beta+\gamma\right)}
	\tau^{-\delta(\beta+1)}
	d(u,v).
\end{eqnarray*}

Consequently, \eqref{int2} is bounded by
\begin{eqnarray*}
	&&
	2|a|cM^{\beta}
	\int_{0}^{t}
	(t-\tau)^{-\frac12\left(\frac{n\beta}{\kappa}+\alpha\beta+\gamma\right)}
	\tau^{-\delta(\beta+1)}
	\,d\tau\,
	d(u,v)
	\\
	&=&
	2|a|cM^{\beta}\mu\,
	t^{-\delta}
	t^{-\frac12\left(\frac{n\beta}{\kappa}+\alpha\beta+\gamma\right)
		-\delta\beta+1}
	d(u,v),
\end{eqnarray*}
where
\[
\mu
=
\int_{0}^{1}
(1-h)^{-\frac12\left(\frac{n\beta}{\kappa}+\alpha\beta+\gamma\right)}
h^{-\delta(\beta+1)}
\,dh.
\]

Since
\[
\delta=\frac{n}{2q}-\frac{n}{2\kappa},
\]
we have
\[
-\frac12\left(\frac{n\beta}{\kappa}+\alpha\beta+\gamma\right)
-\delta\beta+1
=
1-\frac12\left(\frac{n\beta}{q}+\alpha\beta+\gamma\right).
\]
Therefore,
\[
\eqref{int2}
\lesssim
2|a|cM^{\beta}\mu\,
t^{-\delta}
t^{1-\frac12\left(\frac{n\beta}{q}+\alpha\beta+\gamma\right)}
d(u,v).
\]
Since
\[
q>\frac{n\beta}{2-\alpha\beta-\gamma},
\]
we have
\[
1-\frac12\left(\frac{n\beta}{q}+\alpha\beta+\gamma\right)>0.
\]
Moreover, since $\delta=\frac{n}{2q}-\frac{n}{2\kappa}$ with $\kappa>q$, we have
$\delta(\beta+1)<1$. Furthermore, $\kappa>q>\frac{n\beta}{2-\alpha\beta-\gamma}$
implies
\[
\frac12\left(\frac{n\beta}{\kappa}+\alpha\beta+\gamma\right)<1.
\]
Therefore,
\begin{equation}
t^{\delta}\|\Lambda(u)(t)-\Lambda(v)(t)\|_{\dot{K}_{q}^{\alpha,r}}
\lesssim
\|u_{0}-v_{0}\|_{\dot{K}_{q}^{\alpha,r}}
+c_{1}M^{\beta}
T^{1-\frac12(\frac{n\beta}{q}+\alpha\beta+\gamma)}
d(u,v).
\label{main-result32}
\end{equation}
Combining \eqref{main-result31} and \eqref{main-result32}, we obtain
\begin{equation}
d(\Lambda(u),\Lambda(v))
\lesssim
\|u_{0}-v_{0}\|_{\dot{K}_{q}^{\alpha,r}}
+c_{1}M^{\beta}
T^{1-\frac12(\frac{n\beta}{q}+\alpha\beta+\gamma)}
d(u,v).
\label{main-result33}
\end{equation}

Now let $u\in F_{M}$. Since
\[
1-\frac12\left(\frac{n\beta}{q}+\alpha\beta+\gamma\right)>0,
\]
it follows from the previous estimates that
\[
\Lambda(u)\in
C([0,T],\dot{K}_{q}^{\alpha,r})
\cap
C([0,T],\dot{K}_{\kappa}^{\alpha,r}).
\]
Moreover, by \eqref{assu1}, \eqref{assu2}, and \eqref{main-result33},
\begin{align*}
d(\Lambda(u),0)
&\lesssim
\|u_{0}\|_{\dot{K}_{q}^{\alpha,r}}
+c_{1}M^{\beta}
T^{1-\frac12(\frac{n\beta}{q}+\alpha\beta+\gamma)}
d(u,0) \\
&\le
\sigma
+c_{1}M^{\beta+1}
T^{1-\frac12(\frac{n\beta}{q}+\alpha\beta+\gamma)} \\
&\le M,
\end{align*}
which shows that $\Lambda$ maps $F_{M}$ into itself.

Taking $u_{0}=v_{0}$ in \eqref{main-result33}, we obtain
\[
d(\Lambda(u),\Lambda(v))
\le
c_{2}M^{\beta}
T^{1-\frac12(\frac{n\beta}{q}+\alpha\beta+\gamma)}
d(u,v).
\]
Hence, by \eqref{assu2}, $\Lambda$ is a strict contraction on $F_{M}$.
Therefore, the Banach fixed-point theorem guarantees the existence of a unique
fixed point of $\Lambda$ in $F_{M}$, which is the unique mild solution of
\eqref{int-equa1}.

To prove uniqueness for arbitrary $M$, it suffices to choose $T>0$
sufficiently small so that \eqref{assu2} remains valid. Finally, by a
standard continuation argument, this local solution extends to a maximal
solution. This completes the proof.
\end{proof}

In the next we study Theorem \ref{result3} when $\kappa \geq q(\beta +1)$.

\begin{theorem}
	\label{result3 copy(3)}
	Let $n\geq1$ be an integer, $1\leq q<\infty$, $1\leq r<\infty$, $\beta>0$ and
	$\gamma\in\mathbb{R}$ satisfy $\gamma<n$. Let $\kappa\geq1$ be such that
	\[
	\max\Big(\frac1q-\frac{2}{n(\beta+1)},\,
	\frac1q-\frac{2-\gamma}{n\beta}\Big)
	<
	\frac1\kappa
	<
	\min\Big(
	\frac1{q(\beta+1)},\,
	\frac{n-\gamma}{n(\beta+1)}+\frac{\gamma}{n\beta},\,
	\frac1{\beta+1}+\frac{\gamma}{n\beta}
	\Big).
	\]
	Assume that $\alpha\in\mathbb{R}$ satisfies
	\[
	\alpha\leq\gamma-\frac{n}{q}
	\]
	and
	\[
	-\frac{n}{\kappa}
	<
	\alpha
	<
	\min\Big(
	\frac{n-\gamma}{\beta+1}-\frac{n}{\kappa},\,
	\frac{n}{\beta+1}-\frac{n}{\kappa},\,
	\frac{2-\gamma}{\beta}-\frac{n}{q}
	\Big).
	\]
	Then \eqref{int-equa1} is locally well posed in $\dot{K}_{q}^{\alpha,r}$ as in
	Theorem~\ref{result1}, except that uniqueness holds only in the class of
	continuous functions $u$ satisfying
	\[
	\sup_{0<t\leq T}
	t^{\frac{n}{2q}-\frac{n}{2\kappa}}
	\|u(t)\|_{\dot{K}_{\kappa}^{\alpha,r}}
	<\infty.
	\]
	Moreover, the corresponding solution extends to a maximal interval
	$[0,T_{\max})$, where either $T_{\max}=\infty$, or $T_{\max}<\infty$ and
	\[
	\lim_{t\to T_{\max}}
	\|u(t)\|_{\dot{K}_{q}^{\alpha,r}}
	=\infty.
	\]
	Furthermore, the minimal existence time $T$ depends only on the norm of the
	initial datum $\|u_{0}\|_{\dot{K}_{q}^{\alpha,r}}$.
\end{theorem}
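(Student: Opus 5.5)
We follow the contraction scheme of Theorem~\ref{result3} and work in the same space $F_M$ with the same metric $d$, where $\delta=\frac{n}{2q}-\frac{n}{2\kappa}$. The new feature is that now $\kappa>q(\beta+1)$, so $\frac{\kappa}{\beta+1}>q$. In the estimate of the Duhamel term in $\dot K_q^{\alpha,r}$ the nonlinearity therefore lives in a space with Lebesgue exponent \emph{larger} than the target exponent $q$. Consequently Lemma~\ref{Bernstein-Herz-ine1}, which requires $q\le p$ in its notation, no longer applies, and we replace it by Lemma~\ref{Bernstein-Herz-ine2}, which treats the case $p\le q$. The estimates in $\dot K_\kappa^{\alpha,r}$, for both the linear part and the Duhamel term, are unchanged. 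Indeed $\frac{\kappa}{\beta+1}\le\kappa$, so Lemma~\ref{Bernstein-Herz-ine1} still applies with $(\alpha_1,\alpha_2,p,q)=(\alpha,\alpha(\beta+1),\kappa,\frac{\kappa}{\beta+1})$. Those bounds only use $\alpha\beta+\gamma\ge0$-type conditions together with $\alpha<\min(\frac{n-\gamma}{\beta+1},\frac{n}{\beta+1})-\frac{n}{\kappa}$ and $\alpha>-\frac n\kappa$, all of which are assumed.

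\textbf{Key step.} Apply Lemma~\ref{Bernstein-Herz-ine2} with $p=q$, $q\mapsto \frac{\kappa}{\beta+1}$, $\alpha_1=\alpha$, $\alpha_2=\alpha(\beta+1)$, $s=r$, $\upsilon=\frac{r}{\beta+1}$ (or $\upsilon=\infty$ after the embedding \eqref{herz-emb}). We need to check three things.
\begin{itemize}
\item $\alpha+\frac nq>0$. This follows from $\alpha>-\frac n\kappa>-\frac nq$.
\item The upper bound $\alpha(\beta+1)<\min(n-\frac{n(\beta+1)}{\kappa}-\gamma,\,n-\frac{n(\beta+1)}{\kappa})$. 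This is exactly the assumed upper bound on $\alpha$.
\item The lower bound $\alpha(\beta+1)>\max(\alpha-\gamma+\frac nq-\frac{n(\beta+1)}{\kappa},-\frac{n(\beta+1)}{\kappa})$. The second entry follows from $\alpha>-\frac n\kappa$. The first is equivalent to $\alpha\beta+\gamma>\frac nq-\frac{n(\beta+1)}{\kappa}$, and this is where the hypothesis $\alpha\le\gamma-\frac nq$ together with the range of $\frac1\kappa$ must be used.
\end{itemize}
If only equality can be obtained in the first entry, we fall into the second alternative of the lemma, with $s=\upsilon$, $\alpha+\frac nq-\gamma>0$. In that case we first pass to the $q$-index $\frac{r}{\beta+1}$ and then use the embedding $\dot K_q^{\alpha,\frac r{\beta+1}}\hookrightarrow\dot K_q^{\alpha,r}$, exactly as in Theorem~\ref{result3}. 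The lemma then yields the factor $(t-\tau)^{-\frac12(\frac{n(\beta+1)}{\kappa}-\frac nq+\alpha\beta+\gamma)}$. Hölder in Herz spaces, with $\frac{\beta+1}{\kappa}=\frac\beta\kappa+\frac1\kappa$, gives $2cM^\beta\tau^{-\delta(\beta+1)}d(u,v)$.

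\textbf{Remaining steps.} The time integral is a Beta function, convergent because $\delta(\beta+1)<1$ (from $\frac1\kappa>\frac1q-\frac2{n(\beta+1)}$) and because the other exponent is $<1$ (from $\kappa>q>q_c$, where $q>q_c$ follows from $\alpha<\frac{2-\gamma}\beta-\frac nq$). By scaling this produces $T^{1-\frac12(\frac{n\beta}q+\alpha\beta+\gamma)}$, a positive power of $T$. Continuity in time, the self-map and contraction properties under \eqref{assu1}--\eqref{assu2}, uniqueness in the weighted class, the dependence of $T$ on $\|u_0\|_{\dot K_q^{\alpha,r}}$ only, and the blow-up alternative then follow verbatim from Theorem~\ref{result3}.

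\textbf{Main obstacle.} The delicate point is verifying the first lower-bound condition of Lemma~\ref{Bernstein-Herz-ine2}, i.e. that $\alpha\beta+\gamma\ge \frac nq-\frac{n(\beta+1)}\kappa$ under the given hypotheses (in particular $\alpha\le\gamma-\frac nq$ and the range of $\frac1\kappa$). The first thing to try is to combine $\frac1\kappa<\frac1{q(\beta+1)}$, which makes the right-hand side positive, with the restrictions on $\alpha$ and $\gamma$. If strict inequality fails, we fall back on the endpoint alternative of Lemma~\ref{Bernstein-Herz-ine2}.
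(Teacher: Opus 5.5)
Your route is the paper's: the paper's proof of this theorem just says to repeat Theorem~\ref{result3}, with Lemma~\ref{Bernstein-Herz-ine2} replacing Lemma~\ref{Bernstein-Herz-ine1}. Like you, it applies that lemma with $p=q$ and lemma-$q=\frac{\kappa}{\beta+1}$ to the Duhamel term in $\dot K_q^{\alpha,r}$.

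\textbf{The gap.} The step you call the main obstacle is left open:
\[
\alpha\beta+\gamma>\frac nq-\frac{n(\beta+1)}{\kappa}.
\]
Your fallback for the equality case is also unavailable. The endpoint alternative of Lemma~\ref{Bernstein-Herz-ine2} requires $\alpha_1+\frac np-\gamma=\alpha+\frac nq-\gamma>0$. That is exactly what the hypothesis $\alpha\le\gamma-\frac nq$ forbids. So, as written, the key estimate is neither proved nor correctly backed up.

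\textbf{The fix.} The missing argument is one line. Use $\gamma\ge\alpha+\frac nq$ and then $\alpha>-\frac n\kappa$:
\[
\alpha\beta+\gamma\;\ge\;\alpha(\beta+1)+\frac nq\;>\;\frac nq-\frac{n(\beta+1)}{\kappa}.
\]
Hence the strict (first) alternative of Lemma~\ref{Bernstein-Herz-ine2} always applies, and the endpoint case never arises. Equivalently, the power $\frac12\bigl(\frac{n(\beta+1)}{\kappa}-\frac nq+\alpha\beta+\gamma\bigr)$ of $(t-\tau)^{-1}$ is positive.

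\textbf{Two further repairs.} The same inequality is needed to fix two claims you treat as ``unchanged'' or ``verbatim''.

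First, for the Duhamel term in $\dot K_\kappa^{\alpha,r}$, Lemma~\ref{Bernstein-Herz-ine1} needs $\alpha-\gamma\le\alpha(\beta+1)$, i.e.\ $\alpha\beta+\gamma\ge0$. You say this is assumed, but it is not. The lower bound $\alpha\ge-\frac\gamma\beta$ of Theorem~\ref{result3} has been dropped from this statement. It does follow from the display above, because $\frac1\kappa<\frac1{q(\beta+1)}$ makes $\frac nq-\frac{n(\beta+1)}{\kappa}>0$.

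Second, both linear estimates, $\dot K_q^{\alpha,r}\to\dot K_q^{\alpha,r}$ and $\dot K_q^{\alpha,r}\to\dot K_\kappa^{\alpha,r}$, need $-\frac nq<\alpha<n-\frac nq$. In Theorem~\ref{result3} the upper bound came from $\alpha<\frac{n}{\beta+1}-\frac n\kappa\le\frac{n}{\beta+1}-\frac{n}{q(\beta+1)}$. That used $\frac1\kappa\ge\frac1{q(\beta+1)}$, which is now reversed, and the old bound can genuinely fail: take $n=\beta=q=1$ and $\frac1\kappa$ small. Use instead $\alpha\le\gamma-\frac nq<n-\frac nq$, which holds because $\gamma<n$.

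With these three checks inserted, the rest of your argument goes through as you describe: the Beta integrals with $\delta(\beta+1)<1$, $q>q_c$, the positive power of $T$, contraction, uniqueness and continuation.
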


\begin{proof}
	The proof is analogous to that of Theorem~\ref{result3}. The only difference
	is that Lemma~\ref{Bernstein-Herz-ine2} is used in place of
	Lemma~\ref{Bernstein-Herz-ine1}. We therefore omit the details.
\end{proof}

\begin{remark}
	In view of Theorem~\ref{result3 copy(3)} and Lemma~\ref{Bernstein-Herz-ine2},
	the assumption
	\[
	\alpha \leq \gamma-\frac{n}{q}
	\]
	in Theorem~\ref{result3 copy(3)} can be relaxed. In particular, the case
	\[
	\alpha>\gamma-\frac{n}{q}
	\]
	can also be treated.
\end{remark}

We now study the limiting case $\alpha =\frac{n-\gamma }{\beta +1}-\frac{n}{\kappa 
}$\ of Theorem \ref{result3}.

\begin{theorem}
	\label{result3 copy(1)}
	Let $n\geq 1$ be an integer, $1<q<\infty$, $1\leq r<\infty$, $\beta>0$, and
	$\gamma$ satisfy
	\[
	0<\gamma<\min\bigl(2,n,n-(n-2)(\beta+1)\bigr).
	\]
	Let $\kappa>q$ be such that
	\[
	\frac{1}{\kappa}>
	\max\Bigg(
	\frac{n-\gamma}{n(\beta+1)}+\frac{1}{q}-1,\,
	\frac{n-\gamma}{n(\beta+1)}+\frac{1}{q}+\frac{\gamma-2}{n\beta},\,
	\frac{1}{q(\beta+1)},\,
	\frac{1}{q}-\frac{2}{n(\beta+1)}
	\Bigg)
	\]
	and
	\[
	\frac{1}{\kappa}
	<
	\min\Bigg(
	\frac{n-\gamma}{n(\beta+1)}+\frac{1}{q},\,
	\frac{1}{\beta}-\frac{n-\gamma}{n\beta(\beta+1)}
	\Bigg).
	\]
	Then \eqref{int-equa1} is locally well-posed in
	\[
	\dot{K}_{q}^{\frac{n-\gamma}{\beta+1}-\frac{n}{\kappa},1},
	\]
	as in Theorem \ref{result1}, except that uniqueness holds only in the class
	of continuous functions $u$ satisfying
	\[
	t^{\frac{n}{2q}-\frac{n}{2\kappa}}
	\|u(t)\|_{\dot{K}_{\kappa}^{\frac{n-\gamma}{\beta+1}-\frac{n}{\kappa},1}}
	\]
	is bounded on $(0,T]$.
	
	Moreover, the solution extends to a maximal interval of existence
	$[0,T_{\max})$ such that either $T_{\max}=\infty$, or
	$T_{\max}<\infty$ and
	\[
	\lim_{t\to T_{\max}}
	\|u(t)\|_{\dot{K}_{q}^{\frac{n-\gamma}{\beta+1}-\frac{n}{\kappa},1}}
	=\infty.
	\]
	Furthermore, the minimal existence time $T$ depends only on
	\[
	\|u_{0}\|_{\dot{K}_{q}^{\frac{n-\gamma}{\beta+1}-\frac{n}{\kappa},1}}.
	\]
\end{theorem}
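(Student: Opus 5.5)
The plan is to copy the contraction argument of Theorem~\ref{result3}, this time with the fixed value $\alpha=\frac{n-\gamma}{\beta+1}-\frac{n}{\kappa}$ and with outer summability index $1$. The smoothing estimates at this endpoint come from Lemma~\ref{Bernstein-Herz-ine1 copy(1)} (for $q\le p$) and Lemma~\ref{Bernstein-Herz-ine2 copy(1)}, rather than from Lemma~\ref{Bernstein-Herz-ine1}. Set $\delta=\frac{n}{2q}-\frac{n}{2\kappa}$. Fix $M,\sigma,T$ satisfying analogues of \eqref{assu1}--\eqref{assu2}. Let $F_M$ be the set of $u\in C([0,T],\dot K_q^{\alpha,1})\cap C((0,T],\dot K_\kappa^{\alpha,1})$ with $\sup_t\|u(t)\|_{\dot K_q^{\alpha,1}}\le M$ and $\sup_t t^{\delta}\|u(t)\|_{\dot K_\kappa^{\alpha,1}}\le M$, equipped with the same metric $d$ as before. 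The product $\alpha(\beta+1)=n-\gamma-\frac{n(\beta+1)}{\kappa}$ is exactly $n-\frac{n}{\kappa/(\beta+1)}-\gamma$. So the nonlinearity $|u|^\beta u$ lies in $\dot K_{\kappa/(\beta+1)}^{n-\frac{n(\beta+1)}{\kappa}-\gamma,\frac1{\beta+1}}\hookrightarrow\dot K_{\kappa/(\beta+1)}^{n-\frac{n(\beta+1)}{\kappa}-\gamma,1}$. This is the endpoint space these lemmas take as input. Hölder's inequality for Herz norms gives
\[
\bigl\||u|^\beta u-|v|^\beta v\bigr\|_{\dot K_{\kappa/(\beta+1)}^{\alpha(\beta+1),\frac1{\beta+1}}}\lesssim\bigl(\|u\|^\beta_{\dot K_\kappa^{\alpha,1}}+\|v\|^\beta_{\dot K_\kappa^{\alpha,1}}\bigr)\|u-v\|_{\dot K_\kappa^{\alpha,1}} .
\]

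For the linear part, I apply Lemma~\ref{Bernstein-Herz-ine1} with $\alpha_1=\alpha_2=\alpha$ and $(p,q)=(q,q)$, and then with $(p,q)=(\kappa,q)$. This gives $\|e^{t\Delta}u_0\|_{\dot K_q^{\alpha,1}}\lesssim\|u_0\|$ and $\|e^{t\Delta}u_0\|_{\dot K_\kappa^{\alpha,1}}\lesssim t^{-\delta}\|u_0\|$. It needs $-\frac nq<\alpha<n-\frac nq$ and $\alpha+\frac n\kappa>0$. These follow from $\frac1\kappa<\frac{n-\gamma}{n(\beta+1)}+\frac1q$ and $\frac1\kappa>\frac{n-\gamma}{n(\beta+1)}+\frac1q-1$, together with $\gamma<n$.

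For the Duhamel term in $\dot K_\kappa^{\alpha,1}$, the pair is $q\to p$ with $\frac{\kappa}{\beta+1}\le\kappa$. I use the first estimate of Lemma~\ref{Bernstein-Herz-ine1 copy(1)} with $p=\kappa$ and $s=1$. This gives the factor $(t-\tau)^{-\frac12(n-\frac n\kappa-\alpha)}$, and its exponent equals $\frac12(\frac{n\beta}{\kappa}+\alpha\beta+\gamma)$, as in Theorem~\ref{result3}. The hypotheses to check are $\alpha+\frac n\kappa>0$ and $\max(\alpha-\gamma,-\frac{n(\beta+1)}{\kappa})<n-\frac{n(\beta+1)}\kappa-\gamma$. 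The second reduces to $\frac1\kappa<\frac1\beta-\frac{n-\gamma}{n\beta(\beta+1)}$ and $\gamma<n$. For the Duhamel term in $\dot K_q^{\alpha,1}$, the order of $q$ and $\frac{\kappa}{\beta+1}$ matters. Since $\frac1\kappa>\frac1{q(\beta+1)}$, we have $\frac\kappa{\beta+1}<q$, so Lemma~\ref{Bernstein-Herz-ine1 copy(1)} applies again, now with $p=q$. The resulting time exponent is $\frac12(\frac{n(\beta+1)}{\kappa}-\frac nq+\alpha\beta+\gamma)$.

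The time integrals $\int_0^t(t-\tau)^{-a}\tau^{-\delta(\beta+1)}d\tau$ are then handled exactly as in Theorem~\ref{result3}. They produce $t^{1-\frac12(\frac{n\beta}q+\alpha\beta+\gamma)}$ and $t^{-\delta}t^{1-\frac12(\frac{n\beta}q+\alpha\beta+\gamma)}$. Convergence needs $\delta(\beta+1)<1$, which is the condition $\frac1\kappa>\frac1q-\frac2{n(\beta+1)}$. It also needs each exponent $a$ to be less than $1$ and the final power to be positive. With this $\alpha$, positivity of the power is exactly $\frac1\kappa>\frac{n-\gamma}{n(\beta+1)}+\frac1q+\frac{\gamma-2}{n\beta}$. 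The assumption $\gamma<n-(n-2)(\beta+1)$ ensures that the interval for $\frac1\kappa$ is nonempty. The positive power of $T$ then gives the self-map and contraction properties, hence a fixed point. Uniqueness and continuation follow as before, and $T$ depends only on $\sigma$.

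\textbf{Main obstacle.} The hardest step is checking that each application of the endpoint lemmas is legitimate. Lemma~\ref{Bernstein-Herz-ine1 copy(1)} requires input in $\dot K^{\cdot,1}$, while Hölder's inequality only produces index $\frac1{\beta+1}$, so the embedding \eqref{herz-emb} must be used. The case $p=\kappa$ also needs continuity in time of the $\dot K_\kappa$ component near $t=0$. I would handle this by working in $C((0,T],\cdot)$ with the weighted supremum, and by proving continuity at $0$ in $\dot K_q^{\alpha,1}$ through density of nice functions in $\dot K_q^{\alpha,1}$.
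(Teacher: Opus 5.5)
Your plan coincides with the paper's proof. It uses the same two-norm space with weight $t^{\delta}$ on the $\dot K_{\kappa}$ component, and Lemma~\ref{Bernstein-Herz-ine1} (with $\gamma=0$) for $e^{t\Delta}u_0$. The nonlinearity is handled, as in the paper, by H\"older's inequality followed by the embedding from outer index $\frac{1}{\beta+1}$ into $1$. The two Duhamel terms are handled by Lemma~\ref{Bernstein-Herz-ine1 copy(1)} with $p=q$ and with $p=\kappa$. Your time exponents coincide with the paper's $\frac12\bigl(n+\frac n\kappa-\frac nq-\frac{n-\gamma}{\beta+1}\bigr)$ and $\frac12\bigl(n-\frac{n-\gamma}{\beta+1}\bigr)$. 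You use $C((0,T],\dot K_{\kappa}^{\alpha,1})$ where the paper uses $C([0,T],\dot K_{\kappa}^{\alpha,1})$; yours is the more sensible choice, since $u_0$ need not lie in $\dot K_{\kappa}^{\alpha,1}$.

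However, the step you single out as the main obstacle has a hole, and the paper has the same hole. Lemma~\ref{Bernstein-Herz-ine1 copy(1)} requires the input Lebesgue exponent to be at least $1$. Here that exponent is $\frac{\kappa}{\beta+1}$, and the stated hypotheses do not force $\kappa\ge\beta+1$. For $n=2$, $\beta=\gamma=1$, $q=\frac65$, the hypotheses hold exactly when $\frac{7}{12}<\frac1\kappa<\frac34$. So every admissible $\kappa$ (e.g.\ $\kappa=\frac{10}{7}$) has $\frac{\kappa}{\beta+1}<1$.

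The failure is real, not just a citation issue. Take $\kappa=\frac{10}{7}$ and a small multiple of a cut-off of $|x-x_0|^{-6/5}$ with $x_0\neq0$, constant in time. It lies in $F_M$, since it is in $L^{6/5}\cap L^{10/7}$ near $x_0$. But $|u|^{\beta}u=u^2$ is not locally integrable in $\mathbb{R}^2$. Hence $e^{(t-\tau)\Delta}\bigl(|\cdot|^{-\gamma}|u|^{\beta}u\bigr)\equiv+\infty$, and $\Lambda$ is not even defined on $F_M$.

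You must add $\kappa\ge\beta+1$, i.e.\ $\frac1\kappa\le\frac1{\beta+1}$, to the hypotheses. With that restriction, the remaining verifications in your proposal are correct.
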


\begin{proof}
Recall that
\[
\delta =\frac{n}{2q}-\frac{n}{2\kappa }.
\]
Let $M,\sigma,T>0$ and
\[
u_{0}\in \dot{K}_{q}^{\frac{n-\gamma}{\beta+1}-\frac{n}{\kappa},1}
\]
satisfy
\[
\|u_{0}\|_{\dot{K}_{q}^{\frac{n-\gamma}{\beta+1}-\frac{n}{\kappa},1}}
\leq M,
\qquad
cM^{\beta}
T^{1-\frac12\left(\frac{n\beta}{q}
	+\left(\frac{n-\gamma}{\beta+1}-\frac{n}{\kappa}\right)\beta+\gamma\right)}
<1,
\]
and
\[
cM^{\beta+1}
T^{1-\frac12\left(\frac{n\beta}{q}
	+\left(\frac{n-\gamma}{\beta+1}-\frac{n}{\kappa}\right)\beta+\gamma\right)}
\leq M,
\]
where $c>0$ is a constant.

The proof relies on the Banach contraction mapping principle. Define
$H_{M}$ as the set of all functions
\[
u\in C([0,T],
\dot{K}_{q}^{\frac{n-\gamma}{\beta+1}-\frac{n}{\kappa},1})
\cap
C([0,T],
\dot{K}_{\kappa}^{\frac{n-\gamma}{\beta+1}-\frac{n}{\kappa},1})
\]
such that
\[
\max\left\{
\sup_{t\in[0,T]}
\|u(t)\|_{\dot{K}_{q}^{\frac{n-\gamma}{\beta+1}-\frac{n}{\kappa},1}},
\,
\sup_{t\in[0,T]}
t^{\delta}
\|u(t)\|_{\dot{K}_{\kappa}^{\frac{n-\gamma}{\beta+1}-\frac{n}{\kappa},1}}
\right\}
\leq M.
\]

We endow $H_{M}$ with the metric
\[
d(u,v)
=
\max\left\{
\sup_{t\in[0,T]}
\|u(t)-v(t)\|_{\dot{K}_{q}^{\frac{n-\gamma}{\beta+1}-\frac{n}{\kappa},1}},
\,
\sup_{t\in[0,T]}
t^{\delta}
\|u(t)-v(t)\|_{\dot{K}_{\kappa}^{\frac{n-\gamma}{\beta+1}-\frac{n}{\kappa},1}}
\right\}.
\]

We shall prove that the integral equation \eqref{int-equa1} admits a unique
solution $u\in H_{M}$. To this end, let
\[
u_{0},v_{0}\in
\dot{K}_{q}^{\frac{n-\gamma}{\beta+1}-\frac{n}{\kappa},1},
\qquad
u,v\in H_{M}.
\]
By Lemma \ref{Bernstein-Herz-ine1}, we obtain
\begin{equation*}
\big\|e^{t\Delta}(u_{0}-v_{0})\big\|_{\dot{K}_{q}^{\frac{n-\gamma}{\beta+1}-\frac{n}{\kappa},1}}
\lesssim
\big\|u_{0}-v_{0}\big\|_{\dot{K}_{q}^{\frac{n-\gamma}{\beta+1}-\frac{n}{\kappa},1}},
\end{equation*}
since
\[
-\frac{n}{q}
<
\frac{n-\gamma}{\beta+1}-\frac{n}{\kappa}
<
n-\frac{n}{q}.
\]
As before, we define
\begin{equation*}
\Lambda(u)(t)
=
|a|
\int_{0}^{t}
e^{(t-\tau)\Delta}
\bigl(|\cdot|^{-\gamma}|u(\tau)|^{\beta}u(\tau)\bigr)
\,d\tau.
\end{equation*}
Then
\[
\big\|\Lambda(u)(t)-\Lambda(v)(t)\big\|_{\dot{K}_{q}^{\frac{n-\gamma}{\beta+1}-\frac{n}{\kappa},1}}
\]
is bounded by
\begin{equation}
|a|
\int_{0}^{t}
\Big\|
e^{(t-\tau)\Delta}
\bigl(
|\cdot|^{-\gamma}
\bigl(|u(\tau)|^{\beta}u(\tau)-|v(\tau)|^{\beta}v(\tau)\bigr)
\bigr)
\Big\|_{\dot{K}_{q}^{\frac{n-\gamma}{\beta+1}-\frac{n}{\kappa},1}}
\,d\tau .
\label{int3}
\end{equation}

Applying Lemma \ref{Bernstein-Herz-ine1 copy(1)}, we obtain
\begin{align*}
&\Big\|
e^{(t-\tau)\Delta}
\bigl(
|\cdot|^{-\gamma}
\bigl(|u(\tau)|^{\beta}u(\tau)-|v(\tau)|^{\beta}v(\tau)\bigr)
\bigr)
\Big\|_{\dot{K}_{q}^{\frac{n-\gamma}{\beta+1}-\frac{n}{\kappa},1}}
\\
&\qquad\lesssim
(t-\tau)^{-\frac12\left(
	n+\frac{n}{\kappa}-\frac{n}{q}-\frac{n-\gamma}{\beta+1}
	\right)}
\Big\|
|u(\tau)|^{\beta}u(\tau)-|v(\tau)|^{\beta}v(\tau)
\Big\|_{\dot{K}_{\frac{\kappa}{\beta+1}}^{\left(\frac{n-\gamma}{\beta+1}-\frac{n}{\kappa}\right)(\beta+1),\,\frac{1}{\beta+1}}}
\\
&\qquad\lesssim
(t-\tau)^{-\frac12\left(
	n+\frac{n}{\kappa}-\frac{n}{q}-\frac{n-\gamma}{\beta+1}
	\right)}
\big\||u|^{\beta}+|v|^{\beta}\big\|_{\dot{K}_{\frac{\kappa}{\beta}}^{\left(\frac{n-\gamma}{\beta+1}-\frac{n}{\kappa}\right)\beta,\frac{1}{\beta}}}
\|u-v\|_{\dot{K}_{\kappa}^{\frac{n-\gamma}{\beta+1}-\frac{n}{\kappa},1}}
\\
&\qquad\le
2cM^{\beta}
(t-\tau)^{-\frac12\left(
	n+\frac{n}{\kappa}-\frac{n}{q}-\frac{n-\gamma}{\beta+1}
	\right)}
\tau^{-\delta(\beta+1)}
d(u,v),
\end{align*}
since
\[
q>\frac{\kappa}{\beta+1},
\qquad
\frac{n-\gamma}{\beta+1}-\frac{n}{\kappa}>-\frac{n}{q},
\]
and
\begin{align*}
\max\left(
\frac{n-\gamma}{\beta+1}-\frac{n}{\kappa}-\gamma,
-\frac{n(\beta+1)}{\kappa}
\right)
&<
\left(
\frac{n-\gamma}{\beta+1}-\frac{n}{\kappa}
\right)(\beta+1)
\\
&=
n-\gamma-\frac{n(\beta+1)}{\kappa}
<
n-\frac{n(\beta+1)}{\kappa}.
\end{align*}
Hence, \eqref{int3} is bounded by
\begin{eqnarray*}
	&&2|a|cM^{\beta }\int_{0}^{t}(t-\tau )^{-\frac{1}{2}\left(n+\frac{n}{\kappa }-
		\frac{n}{q}-\frac{n-\gamma }{\beta +1}\right)}
	\tau ^{-\delta (\beta +1)}\,d\tau \, d(u,v) \\
	&=&2|a|c\Upsilon M^{\beta }
	t^{-\frac{1}{2}\left(n+\frac{n}{\kappa }-\frac{n}{q}
		-\frac{n-\gamma }{\beta +1}\right)-\delta (\beta +1)+1}
	d(u,v),
\end{eqnarray*}
where
\begin{equation*}
\Upsilon
=\int_{0}^{1}
(1-h)^{-\frac{1}{2}\left(n+\frac{n}{\kappa }-\frac{n}{q}
	-\frac{n-\gamma }{\beta +1}\right)}
h^{-\delta (\beta +1)}\,dh.
\end{equation*}
Since $\delta=\frac{n}{2q}-\frac{n}{2\kappa }$, we have
\begin{equation*}
-\frac{1}{2}\left(
n+\frac{n}{\kappa }-\frac{n}{q}
-\frac{n-\gamma }{\beta +1}
\right)
-\delta(\beta+1)+1
=
-\frac{1}{2}\left(
n+\frac{n\beta}{q}
-\frac{n\beta}{\kappa}
-\frac{n-\gamma}{\beta+1}
\right)+1.
\end{equation*}
Therefore,
\begin{equation*}
\eqref{int3}
\lesssim
2c\Upsilon M^{\beta }
t^{1-\frac{1}{2}\left(
	n+\frac{n\beta}{q}
	-\frac{n\beta}{\kappa}
	-\frac{n-\gamma}{\beta+1}
	\right)}
d(u,v).
\end{equation*}

Since
\[
q>
\frac{n\beta}
{2-\left(\frac{n-\gamma}{\beta+1}-\frac{n}{\kappa}\right)\beta-\gamma},
\]
it follows that
\begin{equation*}
1-\frac{1}{2}\left(
n+\frac{n\beta}{q}
-\frac{n\beta}{\kappa}
-\frac{n-\gamma}{\beta+1}
\right)>0.
\end{equation*}
Moreover,
\[
\delta(\beta+1)<1,
\]
since
\begin{equation*}
\frac{1}{q}-\frac{2}{n(\beta+1)}
<
\frac{1}{\kappa }.
\end{equation*}
Furthermore, since $\kappa>q$ and
\[
0<\gamma<\min\bigl(2,n,n-(n-2)(\beta+1)\bigr),
\]
we have
\begin{equation*}
n+\frac{n}{\kappa }-\frac{n}{q}
-\frac{n-\gamma}{\beta+1}
<
n-\frac{n-\gamma}{\beta+1}
<2.
\end{equation*}
Hence, $\Upsilon<\infty$. Consequently,
\begin{equation*}
\big\|\Lambda_t(u)-\Lambda_t(v)\big\|_
{\dot{K}_{q}^{\frac{n-\gamma}{\beta+1}-\frac{n}{\kappa},1}}
\lesssim
\big\|u_{0}-v_{0}\big\|_
{\dot{K}_{q}^{\frac{n-\gamma}{\beta+1}-\frac{n}{\kappa},1}}
+c_{1}M^{\beta}
T^{1-\frac12\left(
	\frac{n\beta}{q}
	+\left(\frac{n-\gamma}{\beta+1}-\frac{n}{\kappa}\right)\beta
	+\gamma\right)}
d(u,v).
\end{equation*}

By Lemma~\ref{Bernstein-Herz-ine1},
\begin{eqnarray*}
	\big\|e^{t\Delta}(u_{0}-v_{0})\big\|_
	{\dot{K}_{\kappa}^{\frac{n-\gamma}{\beta+1}-\frac{n}{\kappa},1}}
	&\lesssim&
	t^{-\frac12\left(\frac{n}{q}-\frac{n}{\kappa}\right)}
	\big\|u_{0}-v_{0}\big\|_
	{\dot{K}_{q}^{\frac{n-\gamma}{\beta+1}-\frac{n}{\kappa},1}}\\
	&=&
	ct^{-\delta}
	\big\|u_{0}-v_{0}\big\|_
	{\dot{K}_{q}^{\frac{n-\gamma}{\beta+1}-\frac{n}{\kappa},1}},
\end{eqnarray*}
since $\kappa>q$,
\[
-\frac{n}{q}
<
\frac{n-\gamma}{\beta+1}-\frac{n}{\kappa}
<
n-\frac{n}{q},
\]
and
\[
\frac{n-\gamma}{\beta+1}-\frac{n}{\kappa}
>
-\frac{n}{\kappa}.
\]
It remains to estimate
\begin{equation*}
\big\|\Lambda_t(u)-\Lambda_t(v)\big\|_
{\dot{K}_{\kappa}^{\frac{n-\gamma}{\beta+1}-\frac{n}{\kappa},1}}.
\end{equation*}
The nonlinear term is bounded by
\begin{equation}
|a|
\int_{0}^{t}
\big\|
e^{(t-\tau)\Delta}
\big(
|\cdot|^{-\gamma}
(
|u(\tau)|^{\beta}u(\tau)
-
|v(\tau)|^{\beta}v(\tau)
)
\big)
\big\|_
{\dot{K}_{\kappa}^{\frac{n-\gamma}{\beta+1}-\frac{n}{\kappa},1}}
\,d\tau.
\label{int4}
\end{equation}
Now, by Lemma \ref{Bernstein-Herz-ine1 copy(1)}, we obtain
\begin{eqnarray*}
	&&\big\|e^{(t-\tau )\Delta }\big(|\cdot |^{-\gamma }
	\big(|u(\tau )|^{\beta }u(\tau )
	-|v(\tau )|^{\beta }v(\tau )\big)\big)\big\|
	_{\dot{K}_{\kappa }^{\frac{n-\gamma }{\beta +1}-\frac{n}{\kappa },1}} \\
	&\lesssim&
	(t-\tau )^{-\frac12\left(n-\frac{n-\gamma}{\beta+1}\right)}
	\big\||u(\tau )|^{\beta }u(\tau )
	-|v(\tau )|^{\beta }v(\tau )\big\|
	_{\dot{K}_{\frac{\kappa}{\beta+1}}^{
			\left(\frac{n-\gamma}{\beta+1}-\frac{n}{\kappa}\right)(\beta+1),1}} \\
	&\lesssim&
	(t-\tau )^{-\frac12\left(n-\frac{n-\gamma}{\beta+1}\right)}
	\big\||u|^{\beta }+|v|^{\beta }\big\|
	_{\dot{K}_{\frac{\kappa}{\beta}}^{\alpha\beta,\frac{r}{\beta}}}
	\|u-v\|_{\dot{K}_{\kappa}^{\frac{n-\gamma}{\beta+1}-\frac{n}{\kappa},1}} \\
	&\le&
	2cM^{\beta }
	(t-\tau )^{-\frac12\left(n-\frac{n-\gamma}{\beta+1}\right)}
	\tau^{-\delta(\beta+1)}
	d(u,v),
\end{eqnarray*}
because
\[
\kappa>\frac{\kappa}{\beta+1},
\qquad
\frac{n-\gamma}{\beta+1}-\frac{n}{\kappa}
>-\frac{n}{\kappa},
\]
and
\begin{eqnarray*}
	\max\left(
	\frac{n-\gamma}{\beta+1}-\frac{n}{\kappa}-\gamma,
	-\frac{n(\beta+1)}{\kappa}
	\right)
	&\le&
	\left(
	\frac{n-\gamma}{\beta+1}
	-\frac{n}{\kappa}
	\right)(\beta+1) \\
	&=&
	n-\frac{n(\beta+1)}{\kappa}-\gamma \\
	&<&
	n-\frac{n(\beta+1)}{\kappa}.
\end{eqnarray*}

Hence, \eqref{int4} is bounded by
\[
2|a|cM^{\beta}
\int_0^t
(t-\tau)^{-\frac12\left(n-\frac{n-\gamma}{\beta+1}\right)}
\tau^{-\delta(\beta+1)}
\,d\tau\,
d(u,v)
=
2|a|c\lambda M^{\beta}
t^{-\delta}
t^{1-\frac12\left(n-\frac{n-\gamma}{\beta+1}\right)-\delta\beta}
d(u,v),
\]
where
\[
\lambda
=
\int_0^1
(1-h)^{-\frac12\left(n-\frac{n-\gamma}{\beta+1}\right)}
h^{-\delta(\beta+1)}
\,dh.
\]

Since
\[
\delta=\frac{n}{2q}-\frac{n}{2\kappa},
\]
we have
\[
1-\frac12\left(n-\frac{n-\gamma}{\beta+1}\right)-\delta\beta
=
1-\frac12\left(
n-\frac{n-\gamma}{\beta+1}
+\frac{n\beta}{q}
-\frac{n\beta}{\kappa}
\right).
\]
Therefore,
\[
\eqref{int4}
\lesssim
2c\lambda M^{\beta}
t^{-\delta}
t^{1-\frac12\left(
	n-\frac{n-\gamma}{\beta+1}
	+\frac{n\beta}{q}
	-\frac{n\beta}{\kappa}
	\right)}
d(u,v).
\]

Since
\[
q>
\frac{n\beta}
{2-
	\left(
	\frac{n-\gamma}{\beta+1}
	-\frac{n}{\kappa}
	\right)\beta
	-\gamma},
\]
it follows that
\[
1-\frac12\left(
n-\frac{n-\gamma}{\beta+1}
+\frac{n\beta}{q}
-\frac{n\beta}{\kappa}
\right)
>0.
\]

Moreover,
\[
\delta(\beta+1)<1.
\]
Since
\[
\kappa>q>
\frac{n\beta}
{2-
	\left(
	\frac{n-\gamma}{\beta+1}
	-\frac{n}{\kappa}
	\right)\beta
	-\gamma},
\]
we also have
\[
\frac12\left(
n-\frac{n-\gamma}{\beta+1}
\right)<1.
\]

Consequently,
\[
t^\delta
\|\Lambda_t(u)-\Lambda_t(v)\|_
{\dot{K}_q^{\frac{n-\gamma}{\beta+1}-\frac{n}{\kappa},1}}
\lesssim
\|u_0-v_0\|_
{\dot{K}_q^{\frac{n-\gamma}{\beta+1}-\frac{n}{\kappa},1}}
+
c_1M^\beta
T^{1-\frac12\left(
	n-\frac{n-\gamma}{\beta+1}
	+\frac{n\beta}{q}
	-\frac{n\beta}{\kappa}
	\right)}
d(u,v).
\]

The remainder of the proof follows exactly as in the proof of Theorem
\ref{result3}.
\end{proof}

As in the proof of Theorem \ref{result3 copy(1)}, using Lemma
\ref{Bernstein-Herz-ine2 copy(1)}, we obtain the following result.
\begin{theorem}
	\label{result3 copy(2)}
	Let $n\geq 1$ be an integer, $1<q<\infty$, $1\leq r<\infty$, $\beta>0$, and
	$\gamma$ satisfy
	\[
	0<\gamma<\min\bigl(2,n,n-(n-2)(\beta+1)\bigr).
	\]
	Let $\kappa\geq q(\beta+1)$ be such that
	\[
	\frac{1}{\kappa}
	>
	\max\left(
	\frac{n-\gamma}{n(\beta+1)}+\frac{1}{q}-1,\
	\frac{n-\gamma}{n(\beta+1)}+\frac{1}{q}+\frac{\gamma-2}{n\beta},\
	\frac{1}{q}-\frac{2}{n(\beta+1)}
	\right)
	\]
	and
	\[
	\frac{1}{\kappa}
	<
	\min\left(
	\frac{n-\gamma}{n(\beta+1)}+\frac{1}{q},\
	\frac{1}{\beta}-\frac{n-\gamma}{n\beta(\beta+1)}
	\right).
	\]
	Then the integral equation \eqref{int-equa1} is locally well posed in
	$\dot{K}_{q}^{\frac{n-\gamma}{\beta+1}-\frac{n}{\kappa},1}$ as in Theorem
	\ref{result1}, except that uniqueness is guaranteed only in the class of
	continuous functions $u$ satisfying
	\[
	t^{\frac{n}{2q}-\frac{n}{2\kappa}}
	\|u(t)\|_{\dot{K}_{\kappa}^{\frac{n-\gamma}{\beta+1}-\frac{n}{\kappa},1}}
	\in L^\infty(0,T].
	\]
	Moreover, the solution $u$ can be extended to a maximal interval
	$[0,T_{\max})$ such that either $T_{\max}=\infty$, or
	$T_{\max}<\infty$ and
	\[
	\lim_{t\to T_{\max}}
	\|u(t)\|_{\dot{K}_{q}^{\frac{n-\gamma}{\beta+1}-\frac{n}{\kappa},1}}
	=\infty.
	\]
	Furthermore, the minimal existence time $T$ depends only on
	\[
	\|u_{0}\|_{\dot{K}_{q}^{\frac{n-\gamma}{\beta+1}-\frac{n}{\kappa},1}}.
	\]
\end{theorem}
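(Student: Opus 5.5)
We argue exactly as in the proof of Theorem~\ref{result3 copy(1)}, by a Banach fixed point argument in the two-norm space $H_M$. Write $\alpha_*=\frac{n-\gamma}{\beta+1}-\frac{n}{\kappa}$ and $\delta=\frac{n}{2q}-\frac{n}{2\kappa}$. The space $H_M$ consists of those $u\in C([0,T],\dot K_q^{\alpha_*,1})\cap C([0,T],\dot K_\kappa^{\alpha_*,1})$ with
\[
\max\Big(\sup_{t}\|u(t)\|_{\dot K_q^{\alpha_*,1}},\ \sup_t t^{\delta}\|u(t)\|_{\dot K_\kappa^{\alpha_*,1}}\Big)\le M,
\]
and it carries the corresponding metric $d$. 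The only change is that we now have $\kappa\ge q(\beta+1)$, that is, $\frac{\kappa}{\beta+1}\ge q$. In the estimate of the $\dot K_q$-norm of the Duhamel term, the target integrability $p=q$ is therefore \emph{smaller} than the source integrability $\frac{\kappa}{\beta+1}$. So Lemma~\ref{Bernstein-Herz-ine1 copy(1)} (which needs source $\le$ target) must be replaced by Lemma~\ref{Bernstein-Herz-ine2 copy(1)}.

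\textbf{Linear part.} As before, $\|e^{t\Delta}u_0\|_{\dot K_q^{\alpha_*,1}}\lesssim\|u_0\|_{\dot K_q^{\alpha_*,1}}$ and $\|e^{t\Delta}u_0\|_{\dot K_\kappa^{\alpha_*,1}}\lesssim t^{-\delta}\|u_0\|_{\dot K_q^{\alpha_*,1}}$ follow from Lemma~\ref{Bernstein-Herz-ine1} with $\gamma=0$. These need $-\frac nq<\alpha_*<n-\frac nq$ and $\alpha_*>-\frac n\kappa$. The first and last follow from $0<\gamma<n$. The upper bound $\alpha_*<n-\frac nq$ is the hypothesis $\frac1\kappa>\frac{n-\gamma}{n(\beta+1)}+\frac1q-1$.

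\textbf{$\dot K_q$ estimate of the nonlinear part.} We apply Lemma~\ref{Bernstein-Herz-ine2 copy(1)} with $(p,q)$ replaced by $(q,\frac{\kappa}{\beta+1})$ and $f=|u|^\beta u-|v|^\beta v$. The source weight is $(\beta+1)\alpha_*=n-\gamma-\frac{n(\beta+1)}{\kappa}$, which is exactly the endpoint $n-\frac{n}{\kappa/(\beta+1)}-\gamma$. The lemma requires $-\frac nq<\alpha_*<n-\frac nq$, which is already checked. With $R=(t-\tau)^{-1/2}$ it gives the factor
\[
(t-\tau)^{-\frac12(n-\frac nq-\alpha_*)}=(t-\tau)^{-\frac12(n+\frac n\kappa-\frac nq-\frac{n-\gamma}{\beta+1})},
\]
the same power as in Theorem~\ref{result3 copy(1)}. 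H\"older's inequality on each annulus gives $\|f\|_{\dot K_{\kappa/(\beta+1)}^{(\beta+1)\alpha_*,\frac1{\beta+1}}}\lesssim M^\beta\tau^{-\delta(\beta+1)}d(u,v)$. The embedding $\ell^{1/(\beta+1)}\hookrightarrow\ell^1$ then yields the $\dot K^{\cdot,1}$-norm on the left. The time integral is a Beta integral, and its exponents are the same as before. Finiteness uses $\delta(\beta+1)<1$, which comes from $\frac1\kappa>\frac1q-\frac2{n(\beta+1)}$, and $n-\frac{n-\gamma}{\beta+1}<2$, which comes from $\gamma<n-(n-2)(\beta+1)$. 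The total power of $t$ is $1-\frac12(\frac{n\beta}q+\alpha_*\beta+\gamma)$. It is positive precisely by the hypothesis $\frac1\kappa>\frac{n-\gamma}{n(\beta+1)}+\frac1q+\frac{\gamma-2}{n\beta}$.

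\textbf{$\dot K_\kappa$ estimate of the nonlinear part.} Here the target $\kappa$ exceeds the source $\frac{\kappa}{\beta+1}$, so Lemma~\ref{Bernstein-Herz-ine1 copy(1)} applies verbatim as in the previous proof. It gives $(t-\tau)^{-\frac12(n-\frac{n-\gamma}{\beta+1})}\tau^{-\delta(\beta+1)}$. We check its hypotheses: $\alpha_*+\frac n\kappa>0$, and $\max(\alpha_*-\gamma,-\frac{n(\beta+1)}\kappa)<(\beta+1)\alpha_*$. The second inequality reduces to $\alpha_*\beta>-\gamma$ together with $\alpha_*>-\frac n\kappa$. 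The condition $\frac1\kappa<\frac1\beta-\frac{n-\gamma}{n\beta(\beta+1)}$ is used here; it ensures, for instance, that the source weight stays admissible. Multiplying by $t^\delta$ gives the same positive power of $T$.

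\textbf{Conclusion and main obstacle.} Collecting the estimates gives
\[
d(\Lambda u,\Lambda v)\lesssim\|u_0-v_0\|+cM^\beta T^{\epsilon}d(u,v)
\]
for some $\epsilon>0$. The choice of $M,T$ then gives a contraction on $H_M$, and continuation and uniqueness follow as in Theorem~\ref{result3}. We expect the main obstacle to be the parameter bookkeeping in the Lemma~\ref{Bernstein-Herz-ine2 copy(1)} step: verifying that the endpoint source weight is admissible and that $R$ appears with the correct exponent when $p<q$. One also has to ensure that the first-variable range $-\frac nq<\alpha_*<n-\frac nq$ follows from the listed inequalities on $\frac1\kappa$.
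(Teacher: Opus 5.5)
Your proposal is correct and takes the same route as the paper, which simply reruns the proof of Theorem~\ref{result3 copy(1)} with Lemma~\ref{Bernstein-Herz-ine2 copy(1)} in place of Lemma~\ref{Bernstein-Herz-ine1 copy(1)}. You correctly locate where that replacement is needed: only in the $\dot K_q$-estimate of the Duhamel term, where $\kappa/(\beta+1)\ge q$, while the $\dot K_\kappa$-estimate still uses Lemma~\ref{Bernstein-Herz-ine1 copy(1)}. Your parameter checks are also consistent with the hypotheses. In particular, $\frac1\kappa<\frac1\beta-\frac{n-\gamma}{n\beta(\beta+1)}$ is exactly $\beta\alpha_*+\gamma>0$, and the positivity of the $T$-exponent is exactly the second lower bound on $\frac1\kappa$.
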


\section{Global existence}

In this section, we study the global well-posedness of \eqref{int-equa1}. To
this end, we first establish the following lemma.

\begin{lemma}
\label{keylemma1}Let $\beta >0,\gamma \in \mathbb{R},n\in \mathbb{N}$ and $%
s,q$ be two real numbers. Suppose that $\beta +1<s<q\leq \infty ,\alpha >-%
\frac{n}{q},-\beta \alpha \leq \gamma <n$,%
\begin{equation*}
\frac{1}{s}<\min \Big(\frac{n-\gamma }{n(\beta +1)}-\frac{\alpha }{n},\frac{n%
}{n(\beta +1)}-\frac{\alpha }{n}\Big)
\end{equation*}%
and%
\begin{equation*}
\frac{n}{2}\Big(\frac{\beta +1}{s}-\frac{1}{q}\Big)<1-\frac{\alpha \beta
+\gamma }{2}.
\end{equation*}%
Let $u$ be the solution of \eqref{int-equa1} with initial data $u_{0}\in 
\mathcal{S}^{\prime }(\mathbb{R}^{n})$. Assume that%
\begin{equation*}
\sup_{t>0}t^{\frac{2-\alpha \beta -\gamma }{2\beta }-\frac{n}{2s}}\big\|u(t)%
\big\|_{\dot{K}_{s}^{\alpha ,r}}<\infty .
\end{equation*}%
Then it follows that 
\begin{equation}
\sup_{t>0}t^{\frac{2-\alpha \beta -\gamma }{2\beta }-\frac{n}{2q}}\big\|u(t)%
\big\|_{\dot{K}_{q}^{\alpha ,r}}<\infty .  \label{main4}
\end{equation}
\end{lemma}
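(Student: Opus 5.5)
The plan is the standard bootstrap for regularity gain: the linear part is handled by the heat smoothing estimate, and the Duhamel term is controlled using only the assumed $\dot K_s^{\alpha,r}$ bound. Set
\[
\sigma_s=\frac{2-\alpha\beta-\gamma}{2\beta}-\frac{n}{2s},\qquad
\sigma_q=\frac{2-\alpha\beta-\gamma}{2\beta}-\frac{n}{2q},
\]
and $K=\sup_{t>0}t^{\sigma_s}\|u(t)\|_{\dot K_s^{\alpha,r}}$. Write $u(t)=e^{t\Delta}u_0+a\Lambda(u)(t)$. Since the assumed bound involves only $u$, I would apply the mild formulation at time $t$ with initial time $t/2$, using the semigroup property:
\[
u(t)=e^{\frac t2\Delta}u(t/2)+a\int_{t/2}^{t}e^{(t-\tau)\Delta}\bigl(|\cdot|^{-\gamma}|u(\tau)|^{\beta}u(\tau)\bigr)\,d\tau .
\]
This removes any need for information on $u_0$ beyond being a tempered distribution.

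\textbf{Linear term.} Dominate the heat kernel by $\eta_{R,N}$ with $R\sim t^{-1/2}$. Then apply Lemma~\ref{Bernstein-Herz-ine2} with $\gamma=0$, exponents $p=q\ge s$, $\alpha_1=\alpha_2=\alpha$, and the equality case $s=\upsilon=r$; it requires $\alpha+\frac nq>0$. This gives
\[
\|e^{\frac t2\Delta}u(t/2)\|_{\dot K_q^{\alpha,r}}\lesssim t^{-\frac n2(\frac1s-\frac1q)}\|u(t/2)\|_{\dot K_s^{\alpha,r}}\lesssim K\,t^{-\sigma_q}.
\]
If the paper's ordering convention for $p,q$ in that lemma bites, I would instead use Lemma~\ref{Bernstein-Herz-ine1}.

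\textbf{Nonlinear term.} Apply Lemma~\ref{Bernstein-Herz-ine2} with $p=q$, source exponent $s/(\beta+1)\le q$, $\alpha_1=\alpha$, $\alpha_2=\alpha(\beta+1)$, and the given $\gamma$. Its hypotheses are the following.
\begin{itemize}
\item[(i)] $\alpha(\beta+1)<\min\bigl(n-\frac{n(\beta+1)}{s}-\gamma,\ n-\frac{n(\beta+1)}{s}\bigr)$, which is exactly the assumed bound on $1/s$.
\item[(ii)] $\alpha(\beta+1)>\max\bigl(\alpha-\gamma+\frac nq-\frac{n(\beta+1)}{s},\,-\frac{n(\beta+1)}{s}\bigr)$. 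This follows from $\alpha\beta+\gamma\ge0$, $s<q$, and $\alpha>-\frac nq>-\frac ns$.
\item[(iii)] $\alpha+\frac nq>0$, which is assumed.
\end{itemize}
With Hölder in Herz spaces and the embedding $\dot K^{\alpha(\beta+1),\frac{r}{\beta+1}}\hookrightarrow \dot K^{\alpha(\beta+1),\infty}$, this yields
\[
\Bigl\|e^{(t-\tau)\Delta}\bigl(|\cdot|^{-\gamma}|u|^\beta u\bigr)\Bigr\|_{\dot K_q^{\alpha,r}}\lesssim (t-\tau)^{-\frac12(\frac{n(\beta+1)}{s}-\frac nq+\alpha\beta+\gamma)}\|u(\tau)\|_{\dot K_s^{\alpha,r}}^{\beta+1}.
\]
Inserting $\|u(\tau)\|\le K\tau^{-\sigma_s}$ and $\tau\sim t$ on $[t/2,t]$, the integral is finite precisely by the hypothesis $\frac n2(\frac{\beta+1}s-\frac1q)<1-\frac{\alpha\beta+\gamma}2$. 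A scaling computation then shows the resulting power of $t$ is
\[
1-\tfrac12\Bigl(\tfrac{n(\beta+1)}{s}-\tfrac nq+\alpha\beta+\gamma\Bigr)-(\beta+1)\sigma_s=-\sigma_q ,
\]
since $\beta\sigma_s=1-\frac{\alpha\beta+\gamma}2-\frac{n\beta}{2s}$. Hence this term is bounded by $CK^{\beta+1}t^{-\sigma_q}$.

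\textbf{Expected obstacle.} The main difficulty is verifying the index hypotheses of the Bernstein-type lemmas, especially the regime $s/(\beta+1)\ge1$ (guaranteed by $s>\beta+1$) and the $\theta$ versus $r$ summability index. If Lemma~\ref{Bernstein-Herz-ine2} does not directly accept the $r$-summability, I would fall back to the $\infty$-index version together with the embedding \eqref{herz-emb}. In that case one may pass through $\dot K_q^{\alpha,\infty}$ and interpolate via Lemma~\ref{interpolation2} to recover the $r$-norm.
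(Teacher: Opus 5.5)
Your argument follows the paper's route: restart the mild formulation at $t/2$, bound $e^{\frac t2\Delta}u(t/2)$ by the $\dot{K}_s^{\alpha,r}\to\dot{K}_q^{\alpha,r}$ smoothing estimate, bound the Duhamel integrand by a Bernstein-type estimate plus H\"older, and integrate over $[t/2,t]$. Your integrability condition and the exponent identity giving $t^{-\sigma_q}$ are both correct. The one step that fails as written is the choice of lemma. In its own notation, Lemma~\ref{Bernstein-Herz-ine2} assumes $p\le q$, i.e.\ the target integrability exponent must not exceed the source exponent. Here the target is $q$ and the sources are $s$ and $s/(\beta+1)$, both strictly below $q$. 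So that lemma applies to neither term, and for the linear term its equality case would in addition force $s=q$. The paper instead uses Lemma~\ref{Bernstein-Herz-ine1}, which is the estimate with source exponent $\le$ target exponent.

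With that substitution, everything you need is already in your proposal. For the Duhamel term, take $\alpha_1=\alpha$, $\alpha_2=\alpha(\beta+1)$, source exponent $s/(\beta+1)\ge 1$, target $q$, and $\gamma$ as given. The lower condition now reads $\max\bigl(\alpha-\gamma,-\frac{n(\beta+1)}{s}\bigr)\le\alpha(\beta+1)$ with $\alpha(\beta+1)\neq-\frac{n(\beta+1)}{s}$. This is exactly $\gamma\ge-\alpha\beta$ together with $\alpha>-\frac ns$, so your justification of (ii) carries over, and your (i) and (iii) are unchanged. For the linear term, take $\gamma=0$ and $\alpha_1=\alpha_2=\alpha$; this is the equality case, so $\theta=r$. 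It needs $-\frac ns<\alpha<n-\frac ns$, and the upper bound follows from $\alpha<\frac{n}{\beta+1}-\frac ns$. The summability issue you anticipate does not arise, so no interpolation is needed. If $\gamma>-\alpha\beta$, the source index is $\theta=\infty$ with arbitrary target index $r$. If $\gamma=-\alpha\beta$, then $\theta=r$, and you use $\dot{K}^{\alpha(\beta+1),\frac{r}{\beta+1}}_{s/(\beta+1)}\hookrightarrow\dot{K}^{\alpha(\beta+1),r}_{s/(\beta+1)}$ before H\"older.
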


\begin{proof}
	The proof is based on an argument similar to that in \cite{STW01}, together
	with Lemma \ref{Bernstein-Herz-ine1}. Set
	\[
	A=\sup_{t>0}t^{\delta(s)}
	\|u(t)\|_{\dot{K}_{s}^{\alpha,r}},
	\qquad
	\delta(s)=\frac{2-\alpha\beta-\gamma}{2\beta}-\frac{n}{2s}.
	\]
	
	Using the integral equation \eqref{int-equa1} on the interval
	$[\frac{t}{2},t]$, we write
	\[
	u(t)=e^{\frac{t}{2}\Delta}u\!\left(\frac{t}{2}\right)
	+a\int_{\frac{t}{2}}^{t}
	e^{(t-\tau)\Delta}
	\bigl(|\cdot|^{-\gamma}|u(\tau)|^{\beta}u(\tau)\bigr)\,d\tau.
	\]
	
	By Lemma \ref{Bernstein-Herz-ine1},
	\begin{eqnarray*}
		\left\|e^{\frac{t}{2}\Delta}u\!\left(\frac{t}{2}\right)\right\|
		_{\dot{K}_{q}^{\alpha,r}}
		&\lesssim&
		t^{-\frac12(\frac ns-\frac nq)}
		\left\|u\!\left(\frac{t}{2}\right)\right\|_{\dot{K}_{s}^{\alpha,r}} \\
		&\lesssim&
		t^{-\frac12(\frac ns-\frac nq)-\delta(s)}A \\
		&=&
		Ct^{\frac{n}{2q}-\frac{2-\alpha\beta-\gamma}{2\beta}}A.
	\end{eqnarray*}
	
	Moreover,
	\begin{eqnarray*}
		&&\left\|
		e^{(t-\tau)\Delta}
		\bigl(|\cdot|^{-\gamma}|u(\tau)|^{\beta}u(\tau)\bigr)
		\right\|_{\dot{K}_{q}^{\alpha,r}} \\
		&\lesssim&
		(t-\tau)^{-\frac12\left(
			\frac{n(\beta+1)}{s}
			-\frac nq
			+\alpha\beta+\gamma
			\right)}
		\bigl\||u(\tau)|^{\beta}u(\tau)\bigr\|
		_{\dot{K}_{\frac{s}{\beta+1}}^{\alpha(\beta+1),\,\frac{r}{\beta+1}}} \\
		&\lesssim&
		(t-\tau)^{-\frac12\left(
			\frac{n(\beta+1)}{s}
			-\frac nq
			+\alpha\beta+\gamma
			\right)}
		\|u(\tau)\|_{\dot{K}_{s}^{\alpha,r}}^{\beta+1},
	\end{eqnarray*}
	since $q>s$, $\alpha>-\frac nq$, and
	\[
	\max\left(
	\alpha-\gamma,
	-\frac{n(\beta+1)}{s}
	\right)
	\le
	\alpha(\beta+1)
	<
	\min\left(
	n-\frac{n(\beta+1)}{s}-\gamma,
	n-\frac{n(\beta+1)}{s}
	\right).
	\]
	
	Therefore,
	\[
	\int_{\frac{t}{2}}^{t}
	\left\|
	e^{(t-\tau)\Delta}
	\bigl(|\cdot|^{-\gamma}|u(\tau)|^{\beta}u(\tau)\bigr)
	\right\|_{\dot{K}_{q}^{\alpha,r}}
	\,d\tau
	\]
	is bounded above by
	\begin{eqnarray*}
		&&C
		\int_{\frac{t}{2}}^{t}
		(t-\tau)^{-\frac12\left(
			\frac{n(\beta+1)}{s}
			-\frac nq
			+\alpha\beta+\gamma
			\right)}
		\|u(\tau)\|_{\dot{K}_{s}^{\alpha,r}}^{\beta+1}
		\,d\tau \\
		&\lesssim&
		A^{\beta+1}
		\int_{\frac{t}{2}}^{t}
		(t-\tau)^{-\frac12\left(
			\frac{n(\beta+1)}{s}
			-\frac nq
			+\alpha\beta+\gamma
			\right)}
		\tau^{-\delta(s)(\beta+1)}
		\,d\tau \\
		&=&
		Ct^{-\frac12\left(
			\frac{n(\beta+1)}{s}
			-\frac nq
			+\alpha\beta+\gamma
			\right)-\delta(s)(\beta+1)+1}
		A^{\beta+1}
		\int_{\frac12}^{1}
		(1-h)^{-\frac12\left(
			\frac{n(\beta+1)}{s}
			-\frac nq
			+\alpha\beta+\gamma
			\right)}
		h^{-\delta(s)(\beta+1)}
		\,dh \\
		&=&
		C_1
		t^{-\frac12\left(
			\frac{n(\beta+1)}{s}
			-\frac nq
			+\alpha\beta+\gamma
			\right)-\delta(s)(\beta+1)+1}
		A^{\beta+1},
	\end{eqnarray*}
	where
	\[
	C_1
	=
	C
	\int_{\frac12}^{1}
	(1-h)^{-\frac12\left(
		\frac{n(\beta+1)}{s}
		-\frac nq
		+\alpha\beta+\gamma
		\right)}
	h^{-\delta(s)(\beta+1)}
	\,dh.
	\]
	The above integral is finite because
	\[
	\frac{n(\beta+1)}{s}
	-\frac nq
	+\alpha\beta+\gamma
	<2.
	\]
	
	Furthermore,
	\[
	-\frac12\left(
	\frac{n(\beta+1)}{s}
	-\frac nq
	+\alpha\beta+\gamma
	\right)
	-\delta(s)(\beta+1)
	+1
	=
	\frac{n}{2q}
	-\frac{2-\alpha\beta-\gamma}{2\beta}.
	\]
	Hence,
	\[
	t^{\frac{2-\alpha\beta-\gamma}{2\beta}-\frac{n}{2q}}
	\|u(t)\|_{\dot{K}_{q}^{\alpha,r}}
	\lesssim
	A+A^{\beta+1},
	\]
	for every $t>0$, where the implicit constant is independent of $t$. This proves \eqref{main4}.
\end{proof}

\begin{remark}
Since $\beta >0$, we have 
\begin{equation*}
A+A^{\beta +1}\rightarrow 0\quad \text{as }A\rightarrow 0.
\end{equation*}
\end{remark}

We are now ready to state the main result of this section.
\begin{theorem}
	\label{result5}
	Let $\beta>0$, $n\in\mathbb{N}$, $\alpha,\gamma\in\mathbb{R}$, and
	$\kappa>q_{c}>1$ be such that
	\begin{equation*}
	\max \left(
	-\frac{n}{\beta+1},
	-n+\frac{2-n}{\beta+1},
	-\frac{n}{\beta}+\frac{2-n\beta}{\beta(\beta+1)}
	\right)
	<
	\alpha
	<
	\min \left(
	n,\frac{2+n}{\beta+1}
	\right),
	\end{equation*}
	\begin{equation*}
	\max \left(
	-\alpha\beta,
	2-n\beta,
	2-n\beta-\alpha\beta,
	-\alpha\beta+\frac{2-n\beta}{\beta+1},
	\frac{2-n\beta}{\beta+1}
	\right)
	<
	\gamma
	<
	\min (2-\alpha\beta,n),
	\end{equation*}
	and
	\begin{equation}
	\max \left(
	\frac{1}{q_{c}}-\frac{2}{n(\beta+1)},
	-\frac{\alpha}{n}
	\right)
	<
	\frac{1}{\kappa}
	<
	\min \left(
	\frac{n-\gamma}{n(\beta+1)}-\frac{\alpha}{n},
	\frac{1}{\beta+1},
	\frac{1}{\beta+1}-\frac{\alpha}{n}
	\right).
	\label{assumption1}
	\end{equation}
	Let
	\[
	\delta=\frac{n}{2q_{c}}-\frac{n}{2\kappa}.
	\]
	Suppose that $\sigma>0$ and $M>0$ satisfy
	\[
	\sigma+VM^{\beta+1}\leq M,
	\]
	where $V=V(\alpha,\beta,n,\gamma,\kappa)>0$ is an explicitly computable
	constant. Let $u_{0}$ be a tempered distribution satisfying
	\begin{equation}
	\sup_{t>0}
	t^{\delta}
	\big\|e^{t\Delta}u_{0}\big\|_{\dot{K}_{\kappa}^{\alpha,r}}
	\leq
	\sigma.
	\label{assumption2}
	\end{equation}
	Then there exists a unique global solution $u$ of \eqref{int-equa1}
	satisfying
	\[
	\sup_{t>0}
	t^{\delta}
	\|u(t)\|_{\dot{K}_{\kappa}^{\alpha,r}}
	\leq
	M.
	\]
	
	Furthermore, the following properties hold:
	\begin{enumerate}
		\item[(i)]
		\[
		u-e^{t\Delta}u_{0}
		\in
		C([0,\infty);\dot{K}_{s}^{\alpha,r}),
		\qquad
		\max\left(\frac{1}{q_{c}},-\frac{\alpha}{n}\right)
		<
		\frac{1}{s}
		\leq
		\frac{\beta+1}{\kappa}.
		\]
		
		\item[(ii)]
		\[
		u-e^{t\Delta}u_{0}
		\in
		C((0,\infty);\dot{K}_{s}^{\alpha,r}),
		\]
		provided that
		\[
		\max\left(\frac{1}{q_{c}},-\frac{\alpha}{n}\right)
		\leq
		\frac{1}{s}
		\leq
		\frac{\beta+1}{\kappa},
		\qquad
		\frac{1}{s}\neq-\frac{\alpha}{n}.
		\]
		
		\item[(iii)]
		\[
		u(t)\longrightarrow u_{0}
		\quad\text{in }\mathcal{S}'(\mathbb{R}^{n})
		\quad\text{as }t\rightarrow0.
		\]
		
		\item[(iv)]
		For every $q\in[\kappa,\infty]$,
		\[
		\sup_{t>0}
		t^{\frac{2-\alpha\beta-\gamma}{2\beta}-\frac{n}{2q}}
		\|u(t)\|_{\dot{K}_{q}^{\alpha,r}}
		<\infty.
		\]
		In the case $q=\infty$, assume that $\alpha\ge0$.
	\end{enumerate}
	
	Moreover, let $u_{0}$ and $v_{0}$ satisfy \eqref{assumption2}, and let
	$u$ and $v$ denote the corresponding solutions of \eqref{int-equa1}.
	Then
	\begin{equation}
	\sup_{t>0}
	t^{\frac{2-\alpha\beta-\gamma}{2\beta}-\frac{n}{2q}}
	\|u(t)-v(t)\|_{\dot{K}_{\kappa}^{\alpha,r}}
	\leq
	c
	\sup_{t>0}
	t^{\delta}
	\|u(t)-v(t)\|_{\dot{K}_{\kappa}^{\alpha,r}},
	\label{estu-v1}
	\end{equation}
	for every $q\in[\kappa,\infty]$. In the case $q=\infty$, assume that
	$\alpha\ge0$.
	
	If, in addition,
	\[
	\sup_{t>0}
	t^{\eta+\delta}
	\big\|e^{t\Delta}(u_{0}-v_{0})\big\|_{\dot{K}_{\kappa}^{\alpha,r}}
	<\infty
	\]
	for some $\eta>0$ satisfying
	\[
	\delta(\beta+1)+\eta<1,
	\]
	possibly after reducing $M$, then
	\begin{equation}
	\sup_{t>0}
	t^{\eta+\delta}
	\|u(t)-v(t)\|_{\dot{K}_{\kappa}^{\alpha,r}}
	\leq
	c
	\sup_{t>0}
	t^{\eta+\delta}
	\big\|e^{t\Delta}(u_{0}-v_{0})\big\|_{\dot{K}_{\kappa}^{\alpha,r}},
	\label{estu-v2}
	\end{equation}
	where $c>0$ is a constant.
\end{theorem}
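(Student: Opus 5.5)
The plan is a Kato-type fixed point argument in the scale-invariant weighted-in-time space
\[
X=\Big\{u:(0,\infty)\to\dot K_\kappa^{\alpha,r}:\ \|u\|_X:=\sup_{t>0}t^{\delta}\|u(t)\|_{\dot K_\kappa^{\alpha,r}}\le M\Big\},
\]
with $d(u,v)=\|u-v\|_X$. The map is $\Phi(u)(t)=e^{t\Delta}u_0+a\int_0^t e^{(t-\tau)\Delta}(|\cdot|^{-\gamma}|u|^\beta u)\,d\tau$.

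The key estimate mirrors the one in the proof of Lemma~\ref{keylemma1} with $s=q=\kappa$. We apply Lemma~\ref{Bernstein-Herz-ine1} with $(\alpha_1,\alpha_2,p,q,s,\theta)=(\alpha,\alpha(\beta+1),\kappa,\frac{\kappa}{\beta+1},\frac r{\beta+1},\frac r{\beta+1})$, together with the embedding $\dot K_\kappa^{\alpha,\frac r{\beta+1}}\hookrightarrow\dot K_\kappa^{\alpha,r}$ and H\"older's inequality. This gives
\[
\big\|e^{(t-\tau)\Delta}(|\cdot|^{-\gamma}(|u|^\beta u-|v|^\beta v))\big\|_{\dot K_\kappa^{\alpha,r}}\lesssim (t-\tau)^{-\frac12(\frac{n\beta}{\kappa}+\alpha\beta+\gamma)}\tau^{-\delta(\beta+1)}M^\beta d(u,v).
\]
The hypotheses of that lemma, namely $\alpha>-n/\kappa$, $\alpha(\beta+1)<n-\frac{n(\beta+1)}{\kappa}-\gamma$, $\alpha(\beta+1)<n-\frac{n(\beta+1)}\kappa$ and $\alpha(\beta+1)\ge\alpha-\gamma$, are exactly \eqref{assumption1} together with $\gamma\ge-\alpha\beta$.

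Integrating in $\tau$ yields $t^{1-\frac12(\frac{n\beta}\kappa+\alpha\beta+\gamma)-\delta(\beta+1)}$ times a Beta integral. Since $\frac n{2q_c}=\frac{2-\alpha\beta-\gamma}{2\beta}$, the exponent simplifies exactly to $-\delta$. The Beta integral is finite because $\delta(\beta+1)<1$ (from $\frac1\kappa>\frac1{q_c}-\frac2{n(\beta+1)}$) and $\frac12(\frac{n\beta}\kappa+\alpha\beta+\gamma)<1$ (from $\kappa>q_c$). With $V$ equal to $|a|$ times the constant times the Beta value, we get $\|\Phi(u)\|_X\le\sigma+VM^{\beta+1}\le M$. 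The contraction factor is $2VM^\beta<1$, after possibly shrinking $M$ so that this holds. Banach's fixed point theorem then gives existence and uniqueness in the ball.

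For (i)--(ii) we estimate $u-e^{t\Delta}u_0$ in $\dot K_s^{\alpha,r}$ using Lemma~\ref{Bernstein-Herz-ine1} with target $(\alpha,s)$, or Lemma~\ref{Bernstein-Herz-ine2} when $s>\kappa/(\beta+1)$. The bound is $t^{1-\frac12(\frac{n(\beta+1)}\kappa-\frac ns+\alpha\beta+\gamma)-\delta(\beta+1)}=t^{\frac n2(\frac1{q_c}-\frac1s)}M^{\beta+1}$. The exponent is $>0$ when $\frac1s>\frac1{q_c}$, which gives continuity at $0$; it equals $0$ in the endpoint case, which gives boundedness and continuity on $(0,\infty)$ only. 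Continuity in time is obtained by the standard splitting $\int_0^{t'}-\int_0^t$ together with the semigroup property and dominated convergence. For (iii), since the Duhamel term tends to $0$ in $\dot K_s^{\alpha,r}\subset\mathcal S'$ (by the earlier remark on $V_{\alpha,p,q}$, using $\frac1s>-\frac\alpha n$) and $e^{t\Delta}u_0\to u_0$ in $\mathcal S'$, we get $u(t)\to u_0$ in $\mathcal S'$. Property (iv) and \eqref{estu-v1} follow from Lemma~\ref{keylemma1} applied with $s=\kappa$, checking its hypotheses from \eqref{assumption1}. For the difference we run the same $[t/2,t]$ argument on $u-v$, with the linear part bounded by $t^{-\delta}$ times the $X$-norm of $u-v$. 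For $q=\infty$ we use the case $p=\infty$ of the lemma, which is where $\alpha\ge0$ is needed.

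Finally, for \eqref{estu-v2} we work in the weighted space with weight $t^{\eta+\delta}$. The Duhamel difference is bounded by
\[
M^\beta\int_0^t(t-\tau)^{-\frac12(\frac{n\beta}\kappa+\alpha\beta+\gamma)}\tau^{-\delta\beta-\delta-\eta}\,d\tau\; \sup_\tau \tau^{\eta+\delta}\|u-v\|,
\]
which equals $c\,t^{-\delta-\eta}$ provided $\delta(\beta+1)+\eta<1$. If $cM^\beta<1$, this term can be absorbed into the left-hand side. I expect the main obstacle to be parameter bookkeeping: verifying that the stated ranges of $\alpha$, $\gamma$ and $\kappa$ really imply all the lemma hypotheses, in particular for the $\dot K_s$ estimates in (i)--(ii) across both regimes $s\lessgtr\kappa/(\beta+1)$. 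The absorption step also needs a priori finiteness of $\sup t^{\eta+\delta}\|u-v\|$. I would handle this by running the fixed point directly in the doubly weighted space, so that the finiteness comes from the construction rather than having to be proved separately.
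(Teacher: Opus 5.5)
\textbf{Verdict.} Your fixed-point construction and your treatment of (i)--(iii) and \eqref{estu-v2} follow the paper's Steps 1--3. For the absorption in \eqref{estu-v2}, the paper works on $(0,T]$, where $t^{\delta+\eta}\|u(t)-v(t)\|_{\dot K_\kappa^{\alpha,r}}\le 2MT^{\eta}$ is automatically finite, absorbs with a constant independent of $T$, and lets $T\to\infty$; this is lighter than your doubly weighted fixed point. Also, put the factor $2$ into $V$ instead of shrinking $M$: then $\sigma>0$ and $\sigma+VM^{\beta+1}\le M$ already force $VM^\beta<1$.

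\textbf{The gap: (iv) and \eqref{estu-v1}.} A single application of Lemma~\ref{keylemma1} with $s=\kappa$ does not reach every $q\in[\kappa,\infty]$. The conditions on $s$ alone do follow from \eqref{assumption1}. But the lemma also couples $s$ and $q$; using $\frac{2-\alpha\beta-\gamma}{n}=\frac{\beta}{q_c}$, this coupling reads
\[
\frac n2\Big(\frac{\beta+1}{\kappa}-\frac1q\Big)<1-\frac{\alpha\beta+\gamma}{2}
\iff
\frac1q>\frac{\beta+1}{\kappa}-\frac{\beta}{q_c}.
\]
For $q=\infty$ this requires $\kappa>\frac{\beta+1}{\beta}\,q_c$. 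The hypotheses only give $\kappa>q_c$, and the other constraint $\frac1\kappa>\frac1{q_c}-\frac2{n(\beta+1)}$ points the wrong way. If $\frac1\kappa$ is close to $\frac1{q_c}$, one step barely moves beyond $\kappa$. For example, $n=3$, $\beta=2$, $\gamma=\frac12$, $\alpha=\frac1{100}$, $\frac1\kappa=\frac6{25}$ satisfy every assumption of the theorem. Yet one step from $s=\kappa\approx4.17$ only gives $q\lesssim 4.41$. So ``checking the hypotheses from \eqref{assumption1}'' fails exactly at this coupling condition.

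\textbf{The missing idea.} You need the bootstrap of \cite{STW01} used in the paper's Step 4. Apply Lemma~\ref{keylemma1} repeatedly along $\kappa=s_0<s_1<\cdots$, each time feeding in the decay bound in $\dot K_{s_{i-1}}^{\alpha,r}$ from the previous step. The conditions on $s$ persist because $s_{i-1}>\beta+1$ and $\frac1{s_{i-1}}\le\frac1\kappa$. Since
\[
\frac1{s_{i-1}}-\Big(\frac{\beta+1}{s_{i-1}}-\frac{\beta}{q_c}\Big)=\beta\Big(\frac1{q_c}-\frac1{s_{i-1}}\Big)\ge\beta\Big(\frac1{q_c}-\frac1\kappa\Big)=\frac{2\beta\delta}{n}>0,
\]
each step lowers $\frac1s$ by a fixed amount, so $s=\infty$ is reached in finitely many steps. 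Intermediate $q$ then follow from \eqref{Interpolation}. For \eqref{estu-v1} the same iteration must be run on $u-v$, using at each step the bounds on $u$ and $v$ in $\dot K_{s_{i-1}}^{\alpha,r}$ from (iv).

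Every step also needs $\alpha>-\frac nq$, which is the condition $\alpha_1+\frac np>0$ in Lemma~\ref{Bernstein-Herz-ine1}. So for $\alpha<0$ only $q<-\frac n\alpha$ is reachable, which is why the paper imposes $\frac1{s_{i_0}}>-\frac\alpha n$.

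\textbf{A smaller slip in (i)--(ii).} You have the two lemma regimes reversed. On the whole range in (i)--(ii) one has $s\ge\frac{\kappa}{\beta+1}$, which is the regime $q\le p$ of Lemma~\ref{Bernstein-Herz-ine1}, so that lemma suffices. Lemma~\ref{Bernstein-Herz-ine2} covers the opposite case $s\le\frac{\kappa}{\beta+1}$ and is not needed here.
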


\begin{proof}
The proof is divided into five steps.

\textbf{Step 1.}
Let $X$ be the set of Bochner measurable functions
\[
u:(0,\infty)\rightarrow \dot{K}_{\kappa}^{\alpha,r}
\]
such that
\[
\sup_{t>0}t^{\delta}\|u(t)\|_{\dot{K}_{\kappa}^{\alpha,r}}<\infty.
\]
Let $X_M$ denote the subset of $X$ consisting of all functions satisfying
\[
\sup_{t>0}t^{\delta}\|u(t)\|_{\dot{K}_{\kappa}^{\alpha,r}}\le M.
\]
Equipped with the metric
\[
d(u,v)=\sup_{t>0}t^{\delta}
\|u(t)-v(t)\|_{\dot{K}_{\kappa}^{\alpha,r}},
\]
the set $X_M$ is a nonempty complete metric space.

Consider the mapping
\begin{equation}
\Im_{u_0}(u)(t)
=e^{t\Delta}u_0
+a\int_0^te^{(t-\tau)\Delta}
\big(|\cdot|^{-\gamma}|u(\tau)|^{\beta}u(\tau)\big)\,d\tau,
\label{int-equa2}
\end{equation}
where $u_0$ is a tempered distribution satisfying
\eqref{assumption2}. We shall prove that $\Im_{u_0}$ is a strict
contraction on $X_M$.

Let $u_0,v_0$ satisfy \eqref{assumption2} and let
$u,v\in X_M$. Then
\begin{eqnarray*}
	&&t^{\delta}
	\big\|\Im_{u_0}(u)(t)-\Im_{v_0}(v)(t)\big\|_{\dot{K}_{\kappa}^{\alpha,r}}
	\\
	&\le&
	t^{\delta}
	\big\|e^{t\Delta}(u_0-v_0)\big\|_{\dot{K}_{\kappa}^{\alpha,r}}
	\\
	&&
	+|a|t^{\delta}
	\int_0^t
	\big\|
	e^{(t-\tau)\Delta}
	\big(
	|\cdot|^{-\gamma}
	(|u(\tau)|^{\beta}u(\tau)-|v(\tau)|^{\beta}v(\tau))
	\big)
	\big\|_{\dot{K}_{\kappa}^{\alpha,r}}
	\,d\tau.
\end{eqnarray*}

By the embedding
\[
\dot{K}_{\kappa}^{\alpha,\frac{r}{\beta+1}}
\hookrightarrow
\dot{K}_{\kappa}^{\alpha,r},
\]
Lemma \ref{Bernstein-Herz-ine1}, and H\"{o}lder's inequality,
we obtain
\begin{eqnarray*}
	&&
	\big\|
	e^{(t-\tau)\Delta}
	\big(
	|\cdot|^{-\gamma}
	(|u(\tau)|^{\beta}u(\tau)-|v(\tau)|^{\beta}v(\tau))
	\big)
	\big\|_{\dot{K}_{\kappa}^{\alpha,r}}
	\\
	&\le&
	(t-\tau)^{-\frac12
		\left(
		\frac{n\beta}{\kappa}
		+\alpha\beta+\gamma
		\right)}
	\big\|
	|u(\tau)|^{\beta}u(\tau)
	-
	|v(\tau)|^{\beta}v(\tau)
	\big\|_{
		\dot{K}_{\frac{\kappa}{\beta+1}}^{
			\alpha(\beta+1),\frac{r}{\beta+1}}}
	\\
	&\le&
	c(\beta+1)
	(t-\tau)^{-\frac12
		\left(
		\frac{n\beta}{\kappa}
		+\alpha\beta+\gamma
		\right)}
	\Big(
	\|u(\tau)\|_{\dot{K}_{\kappa}^{\alpha,r}}^{\beta}
	+
	\|v(\tau)\|_{\dot{K}_{\kappa}^{\alpha,r}}^{\beta}
	\Big)
	\|u(\tau)-v(\tau)\|_{\dot{K}_{\kappa}^{\alpha,r}}
	\\
	&\le&
	2c(\beta+1)M^{\beta}
	(t-\tau)^{-\frac12
		\left(
		\frac{n\beta}{\kappa}
		+\alpha\beta+\gamma
		\right)}
	\tau^{-\delta(\beta+1)}
	d(u,v),
\end{eqnarray*}
where we used $\gamma\ge-\alpha\beta$, $\alpha>-\frac{n}{\kappa}$, and
\begin{equation}
\max\left(
\alpha-\gamma,
-\frac{n(\beta+1)}{\kappa}
\right)
\le
\alpha(\beta+1)
<
\min\left(
n-\frac{n(\beta+1)}{\kappa}-\gamma,
n-\frac{n(\beta+1)}{\kappa}
\right),
\label{alpha}
\end{equation}
which follows from \eqref{assumption1}.

Therefore,
\begin{eqnarray*}
	&&
	t^{\delta}
	\big\|
	\Im_{u_0}(u)(t)-\Im_{v_0}(v)(t)
	\big\|_{\dot{K}_{\kappa}^{\alpha,r}}
	\\
	&\le&
	t^{\delta}
	\big\|
	e^{t\Delta}(u_0-v_0)
	\big\|_{\dot{K}_{\kappa}^{\alpha,r}}
	\\
	&&
	+
	2|a|c(\beta+1)M^{\beta}
	t^{\delta}
	\int_0^t
	(t-\tau)^{-\frac12
		(\frac{n\beta}{\kappa}+\alpha\beta+\gamma)}
	\tau^{-\delta(\beta+1)}
	\,d\tau\,
	d(u,v)
	\\
	&=&
	t^{\delta}
	\big\|
	e^{t\Delta}(u_0-v_0)
	\big\|_{\dot{K}_{\kappa}^{\alpha,r}}
	\\
	&&
	+
	2|a|c_1(\beta+1)M^{\beta}
	t^{1+\delta
		-\frac12
		(\frac{n\beta}{\kappa}+\alpha\beta+\gamma)
		-\delta(\beta+1)}
	d(u,v),
\end{eqnarray*}
where
\[
c_1
=
c
\int_0^1
(1-\tau)^{
	-\frac12
	(\frac{n\beta}{\kappa}+\alpha\beta+\gamma)}
\tau^{-\delta(\beta+1)}
\,d\tau,
\]
and the constant $c$ is independent of $t$.

Observe that
\[
\frac1{q_c}
-\frac{2}{n(\beta+1)}
<
\frac1{\kappa},
\]
and
\[
\delta
=
\frac{n}{2q_c}
-
\frac{n}{2\kappa}
=
\frac{2-\alpha\beta-\gamma}{2\beta}
-
\frac{n}{2\kappa}.
\]
Hence,
\[
\delta(\beta+1)<1,
\]
and
\begin{equation}
1+\delta
-\frac12
\left(
\frac{n\beta}{\kappa}
+\alpha\beta+\gamma
\right)
-\delta(\beta+1)
=0.
\label{alpha2}
\end{equation}
Since $\delta>0$, we have
\[
\frac{n}{2\kappa}
<
\frac{2-\alpha\beta-\gamma}{2\beta},
\]
which implies
\[
\frac{n\beta}{2\kappa}
+\frac{\alpha\beta}{2}
+\frac{\gamma}{2}
<1.
\]
Consequently,
\begin{equation}
t^{\delta}
\big\|
\Im_{u_0}(u)(t)-\Im_{v_0}(v)(t)
\big\|_{\dot{K}_{\kappa}^{\alpha,r}}
\le
t^{\delta}
\big\|
e^{t\Delta}(u_0-v_0)
\big\|_{\dot{K}_{\kappa}^{\alpha,r}}
+
VM^{\beta}d(u,v),
\label{main-est1}
\end{equation}
where $V<\infty$.

Setting $v_0=0$ and $v=0$ in \eqref{main-est1}, we obtain
\begin{eqnarray*}
	\sup_{t>0}
	t^{\delta}
	\big\|
	\Im_{u_0}(u)(t)
	\big\|_{\dot{K}_{\kappa}^{\alpha,r}}
	&\le&
	\sup_{t>0}
	t^{\delta}
	\big\|
	e^{t\Delta}u_0
	\big\|_{\dot{K}_{\kappa}^{\alpha,r}}
	+
	VM^{\beta+1}
	\\
	&\le&
	\sigma
	+
	VM^{\beta+1}
	\\
	&\le&
	M.
\end{eqnarray*}
Hence,
\[
\Im_{u_0}(X_M)\subset X_M.
\]

Taking $u_0=v_0$ in \eqref{main-est1}, we deduce that
\[
d\bigl(
\Im_{u_0}(u),
\Im_{u_0}(v)
\bigr)
\le
VM^{\beta}d(u,v).
\]
Since $VM^{\beta}<1$, the mapping $\Im_{u_0}$ is a strict contraction on
$X_M$. Therefore, by the Banach fixed-point theorem,
$\Im_{u_0}$ admits a unique fixed point
$u\in X_M$, which is the unique mild solution to
\eqref{int-equa1}.

\textbf{Step 2.}
In this step, we prove assertions \emph{(i)}, \emph{(ii)}, and \emph{(iii)}.
From the integral equation, we have
\[
u(t)-e^{t\Delta}u_{0}
=
a\int_{0}^{t}
e^{(t-\tau)\Delta}
\bigl(|\cdot|^{-\gamma}|u(\tau)|^{\beta}u(\tau)\bigr)
\,d\tau.
\]
By Lemma \ref{Bernstein-Herz-ine1}, we obtain
\begin{eqnarray*}
	\|u(t)-e^{t\Delta}u_{0}\|_{\dot{K}_{s}^{\alpha,r}}
	&\le&
	|a|
	\int_{0}^{t}
	\Bigl\|
	e^{(t-\tau)\Delta}
	\bigl(
	|\cdot|^{-\gamma}|u(\tau)|^{\beta}u(\tau)
	\bigr)
	\Bigr\|_{\dot{K}_{s}^{\alpha,r}}
	\,d\tau
	\\
	&\le&
	c
	\int_{0}^{t}
	(t-\tau)^{-\frac12
		\left(
		\frac{n(\beta+1)}{\kappa}
		-\frac{n}{s}
		+\alpha\beta+\gamma
		\right)}
	\bigl\|
	|u(\tau)|^{\beta+1}
	\bigr\|_{
		\dot{K}_{\frac{\kappa}{\beta+1}}^{
			\alpha(\beta+1),\frac{r}{\beta+1}}}
	\,d\tau
	\\
	&\le&
	c
	\int_{0}^{t}
	(t-\tau)^{-\frac12
		\left(
		\frac{n(\beta+1)}{\kappa}
		-\frac{n}{s}
		+\alpha\beta+\gamma
		\right)}
	\|u(\tau)\|_{\dot{K}_{\kappa}^{\alpha,r}}^{\beta+1}
	\,d\tau
	\\
	&\le&
	cM^{\beta+1}
	\int_{0}^{t}
	(t-\tau)^{-\frac12
		\left(
		\frac{n(\beta+1)}{\kappa}
		-\frac{n}{s}
		+\alpha\beta+\gamma
		\right)}
	\tau^{-\delta(\beta+1)}
	\,d\tau,
\end{eqnarray*}
where we used $\gamma\ge-\alpha\beta$, $\alpha>-\frac ns$,
$s\ge\frac{\kappa}{\beta+1}$, and \eqref{alpha}.

Moreover,
\begin{eqnarray*}
	&&
	\int_{0}^{t}
	(t-\tau)^{-\frac12
		\left(
		\frac{n(\beta+1)}{\kappa}
		-\frac ns
		+\alpha\beta+\gamma
		\right)}
	\tau^{-\delta(\beta+1)}
	\,d\tau
	\\
	&=&
	t^{1
		-\frac12
		\left(
		\frac{n(\beta+1)}{\kappa}
		-\frac ns
		+\alpha\beta+\gamma
		\right)
		-\delta(\beta+1)}
	\\
	&&\times
	\int_{0}^{1}
	(1-\tau)^{-\frac12
		\left(
		\frac{n(\beta+1)}{\kappa}
		-\frac ns
		+\alpha\beta+\gamma
		\right)}
	\tau^{-\delta(\beta+1)}
	\,d\tau.
\end{eqnarray*}

Using \eqref{alpha2}, we obtain
\begin{eqnarray*}
	&&
	1
	-\frac12
	\left(
	\frac{n(\beta+1)}{\kappa}
	-\frac ns
	+\alpha\beta+\gamma
	\right)
	-\delta(\beta+1)
	\\
	&=&
	\frac{n}{2s}
	-\delta
	-\frac{n}{2\kappa}
	\\
	&=&
	\frac{n}{2s}
	-\frac{n}{2q_c}
	+\frac{n}{2\kappa}
	-\frac{n}{2\kappa}
	\\
	&=&
	\frac{n}{2s}
	-\frac{n}{2q_c}
	\\
	&=&
	\frac{n}{2s}
	-
	\frac{2-\alpha\beta-\gamma}{2\beta}.
\end{eqnarray*}
Therefore,
\begin{equation}
\|u(t)-e^{t\Delta}u_{0}\|_{\dot{K}_{s}^{\alpha,r}}
\le
cM^{\beta+1}
t^{\frac{n}{2s}-\frac{2-\alpha\beta-\gamma}{2\beta}}.
\label{main-est2}
\end{equation}

Since
\[
\frac{n}{2s}
-
\frac{2-\alpha\beta-\gamma}{2\beta}
>0,
\]
it follows that
\[
t^{\frac{n}{2s}-\frac{2-\alpha\beta-\gamma}{2\beta}}
\longrightarrow0
\qquad\text{as }t\searrow0.
\]
Hence,
\[
u(t)-e^{t\Delta}u_{0}
\longrightarrow0
\quad\text{in }\dot{K}_{s}^{\alpha,r},
\]
which proves assertions \emph{(i)} and \emph{(iii)}. Finally, assertion
\emph{(ii)} follows directly from \eqref{main-est2} by taking
$s=q_c$.

\textbf{Step 3.}
In this step, we prove the stronger decay estimate \eqref{estu-v2}. By the
estimate obtained in Step~1, we have
\begin{equation*}
\|u(t)-v(t)\|_{\dot{K}_{\kappa}^{\alpha,r}}
\end{equation*}
is bounded by
\begin{equation*}
\|e^{t\Delta}(u_{0}-v_{0})\|_{\dot{K}_{\kappa}^{\alpha,r}}
+c|a|(\beta+1)M^{\beta}
\int_{0}^{t}
(t-\tau)^{-\frac12
	\left(
	\frac{n\beta}{\kappa}
	+\alpha\beta+\gamma
	\right)}
\tau^{-\delta\beta}
\|u(\tau)-v(\tau)\|_{\dot{K}_{\kappa}^{\alpha,r}}
\,d\tau .
\end{equation*}

Let $\eta>0$ be such that
\[
\delta(\beta+1)+\eta<1.
\]
Then, for every $T>0$,
\begin{eqnarray*}
	&&
	t^{\delta+\eta}
	\int_{0}^{t}
	(t-\tau)^{-\frac12
		\left(
		\frac{n\beta}{\kappa}
		+\alpha\beta+\gamma
		\right)}
	\tau^{-\delta\beta}
	\|u(\tau)-v(\tau)\|_{\dot{K}_{\kappa}^{\alpha,r}}
	\,d\tau
	\\
	&\lesssim&
	t^{1
		-\frac12
		\left(
		\frac{n\beta}{\kappa}
		+\alpha\beta+\gamma
		\right)
		-\delta\beta}
	\sup_{0<\tau\le T}
	\tau^{\delta+\eta}
	\|u(\tau)-v(\tau)\|_{\dot{K}_{\kappa}^{\alpha,r}}
	\\
	&&\times
	\int_{0}^{1}
	(1-h)^{-\frac12
		\left(
		\frac{n\beta}{\kappa}
		+\alpha\beta+\gamma
		\right)}
	h^{-\delta(\beta+1)-\eta}
	\,dh
	\\
	&=&
	C
	\sup_{0<\tau\le T}
	\tau^{\delta+\eta}
	\|u(\tau)-v(\tau)\|_{\dot{K}_{\kappa}^{\alpha,r}},
\end{eqnarray*}
where $C>0$ is independent of $T$. Here we used the facts that
$\delta(\beta+1)+\eta<1$, $\kappa>q_c$, and
\[
1-\frac12
\left(
\frac{n\beta}{\kappa}
+\alpha\beta+\gamma
\right)
-\delta\beta
=0.
\]

Consequently,
\begin{align*}
\sup_{0<t\le T}
t^{\delta+\eta}
\|u(t)-v(t)\|_{\dot{K}_{\kappa}^{\alpha,r}}
&\le
\sup_{0<t\le T}
t^{\delta+\eta}
\|e^{t\Delta}(u_{0}-v_{0})\|_{\dot{K}_{\kappa}^{\alpha,r}}
\\
&\quad
+C|a|(\beta+1)M^{\beta}
\sup_{0<t\le T}
t^{\delta+\eta}
\|u(t)-v(t)\|_{\dot{K}_{\kappa}^{\alpha,r}}.
\end{align*}

Since $C|a|(\beta+1)M^{\beta}<1$, the last term can be absorbed into the
left-hand side. Hence,
\[
\sup_{0<t\le T}
t^{\delta+\eta}
\|u(t)-v(t)\|_{\dot{K}_{\kappa}^{\alpha,r}}
\lesssim
\sup_{0<t\le T}
t^{\delta+\eta}
\|e^{t\Delta}(u_{0}-v_{0})\|_{\dot{K}_{\kappa}^{\alpha,r}}.
\]
This proves the stronger decay estimate \eqref{estu-v2}.

\textbf{Step 4.} We prove (iv). By assumption,
\[
\sup_{t>0}t^{\frac{2-\alpha\beta-\gamma}{2\beta}-\frac{n}{2\kappa}}
\|u(t)\|_{\dot{K}_{\kappa}^{\alpha,r}}<\infty.
\]

We apply Lemma \ref{keylemma1} iteratively, following the argument of
\cite{STW01}. Set $s_{0}=\kappa$, and choose $s_{1}$ such that
\[
\beta+1<s_{0}<s_{1}\leq\infty,\qquad
\alpha>-\frac{n}{s_{1}},\qquad
-\beta\alpha\leq\gamma<n,
\]
together with
\[
\frac{n}{s_{1}}
<
\min\left(
\frac{n-\gamma}{\beta+1}-\alpha,
\frac{n}{\beta+1}-\alpha
\right)
\]
and
\begin{equation}
\frac{\beta+1}{s_{0}}-\frac{1}{s_{1}}
<
\frac{2-\alpha\beta-\gamma}{n}.
\label{main1}
\end{equation}

Then Lemma \ref{keylemma1} yields
\[
\sup_{t>0}
t^{\frac{2-\alpha\beta-\gamma}{2\beta}-\frac{n}{2s_{1}}}
\|u(t)\|_{\dot{K}_{s_{1}}^{\alpha,r}}
<\infty.
\]

Repeating the argument, we choose $s_{2}$ satisfying
\[
\beta+1<s_{1}<s_{2}\leq\infty,\qquad
\alpha>-\frac{n}{s_{2}},\qquad
-\beta\alpha\leq\gamma<n,
\]
\begin{equation}
\frac{\beta+1}{s_{1}}-\frac{1}{s_{2}}
<
\frac{2-\alpha\beta-\gamma}{n},
\label{main2}
\end{equation}
and
\[
\frac{n}{s_{2}}
<
\min\left(
\frac{n-\gamma}{\beta+1}-\alpha,
\frac{n}{\beta+1}-\alpha
\right).
\]

Again, Lemma \ref{keylemma1} implies
\[
\sup_{t>0}
t^{\frac{2-\alpha\beta-\gamma}{2\beta}-\frac{n}{2s_{2}}}
\|u(t)\|_{\dot{K}_{s_{2}}^{\alpha,r}}
<\infty.
\]

Combining \eqref{main1} and \eqref{main2}, we obtain
\[
\begin{aligned}
\frac1{s_2}
&>
\frac{\beta+1}{s_1}
-
\frac{2-\alpha\beta-\gamma}{n}
\\
&>
\frac{(\beta+1)^2}{s_0}
-
\frac{(\beta+2)(2-\alpha\beta-\gamma)}{n}.
\end{aligned}
\]

Proceeding inductively, we construct a sequence $\{s_i\}_{i\ge1}$ such that
\[
\beta+1<s_{i-1}<s_i\le\infty,\qquad
\alpha>-\frac{n}{s_i},\qquad
-\beta\alpha\le\gamma<n,
\]
\[
\frac{\beta+1}{s_{i-1}}-\frac1{s_i}
<
\frac{2-\alpha\beta-\gamma}{n},
\]
and
\[
\frac{n}{s_i}
<
\min\left(
\frac{n-\gamma}{\beta+1}-\alpha,
\frac{n}{\beta+1}-\alpha
\right).
\]

Moreover,
\[
\begin{aligned}
\frac1{s_i}
&>
\frac{(\beta+1)^i}{s_0}
-
\sum_{j=0}^{i-1}
\frac{(\beta+1)^j(2-\alpha\beta-\gamma)}{n}
\\
&=
\frac{(\beta+1)^i}{s_0}
+
\bigl(1-(\beta+1)^i\bigr)
\frac{2-\alpha\beta-\gamma}{\beta n}.
\end{aligned}
\]

Consequently,
\[
\sup_{t>0}
t^{\frac{2-\alpha\beta-\gamma}{2\beta}-\frac{n}{2s_i}}
\|u(t)\|_{\dot{K}_{s_i}^{\alpha,r}}
<\infty.
\]

Let $i_{0}\in\mathbb N$ be such that
\[
\kappa\le q\le s_{i_{0}}
\quad\text{and}\quad
\frac1{s_{i_{0}}}>-\frac{\alpha}{n}.
\]
Such a choice is possible since
\[
\frac1{q_c}-\frac1q
\ge
\frac1{q_c}-\frac1\kappa.
\]

Choose $\theta\in[0,1]$ satisfying
\[
\frac1q
=
\frac{1-\theta}{\kappa}
+
\frac{\theta}{s_{i_{0}}}.
\]

Applying the interpolation inequality \eqref{Interpolation}, we obtain
\[
\begin{aligned}
&t^{\frac{2-\alpha\beta-\gamma}{2\beta}-\frac{n}{2q}}
\|u(t)\|_{\dot{K}_q^{\alpha,r}}
\\
&\le
\left(
t^{\frac{2-\alpha\beta-\gamma}{2\beta}-\frac{n}{2\kappa}}
\|u(t)\|_{\dot{K}_{\kappa}^{\alpha,r}}
\right)^{1-\theta}
\left(
t^{\frac{2-\alpha\beta-\gamma}{2\beta}-\frac{n}{2s_{i_0}}}
\|u(t)\|_{\dot{K}_{s_{i_0}}^{\alpha,r}}
\right)^\theta
\\
&\le
M^{1-\theta}
\left(
t^{\frac{2-\alpha\beta-\gamma}{2\beta}-\frac{n}{2s_{i_0}}}
\|u(t)\|_{\dot{K}_{s_{i_0}}^{\alpha,r}}
\right)^\theta
<\infty.
\end{aligned}
\]

Thus, it remains only to prove (iv) in the case $q=\infty$.

Observe that
\[
\lim_{i\to\infty}
\left(
\frac1{s_0}
+
\left(
\frac1{(\beta+1)^i}-1
\right)
\frac{2-\alpha\beta-\gamma}{\beta n}
\right)
=
\frac1{s_0}
-
\frac{2-\alpha\beta-\gamma}{\beta n}
=
-\frac{2\delta}{n}
<0.
\]

Hence, there exists $i_{1}\in\mathbb N$ such that
\[
\frac{(\beta+1)^{i_1}}{s_0}
+
\left(1-(\beta+1)^{i_1}\right)
\frac{2-\alpha\beta-\gamma}{\beta n}
<
0
=
\frac1\infty.
\]

We therefore set $s_{i_1}=\infty$. Applying Lemma \ref{keylemma1} once more gives
\[
\sup_{t>0}
t^{\frac{2-\alpha\beta-\gamma}{2\beta}}
\|u(t)\|_{\dot{K}_{\infty}^{\alpha,r}}
<\infty,
\]
which is precisely the desired estimate.

\textbf{Step 5.} The continuous dependence estimate \eqref{estu-v1} for
$q=\kappa$ follows directly from \eqref{main-est1} by taking
\[
\Im_{u_{0}}(u)=u
\quad\text{and}\quad
\Im_{v_{0}}(v)=v.
\]
The estimate \eqref{estu-v1} for all $q\in[\kappa,\infty]$, with the additional
assumption $\alpha\geq0$ when $q=\infty$, can then be established by an
iterative argument similar to that used in the proof of (iv). More precisely,
one proves an analogue of Lemma \ref{keylemma1} with $u$ replaced by $u-v$.
This completes the proof of the theorem.
\end{proof}

\begin{remark}
	If $\alpha=0$ and $r=\kappa$, then Theorem~\ref{result5} reduces to
	\cite[Theorem~4.1]{BTW17}. Moreover, the assumptions on $\gamma$ are implied
	by the conditions $q_{c}>1$, $\alpha-\gamma\leq\alpha(\beta+1)$, and
	\eqref{assumption1}. Finally, the assumption on $\alpha$ follows from the
	assumptions on $\gamma$.
	\end{remark}

\begin{theorem}
\label{result5 copy(1)} Under the hypotheses of Theorem~\ref{result5}, the
following assertions hold.

\begin{enumerate}
\item[(i)] 
\begin{equation*}
u(t)-e^{t\Delta }u_{0}\in C([0,\infty );\dot{K}_{s}^{\alpha ,r}),
\end{equation*}%
provided that 
\begin{equation*}
\max \left( \frac{1}{q_{c}},-\frac{\alpha }{n},\frac{\beta +1}{\kappa }%
\right) <\frac{1}{s}<\frac{\beta +1}{\kappa }+\frac{\gamma +\alpha \beta }{n}%
.
\end{equation*}

\item[(ii)] 
\begin{equation*}
u(t)-e^{t\Delta }u_{0}\in C((0,\infty );\dot{K}_{s}^{\alpha ,r}),
\end{equation*}%
provided that 
\begin{equation*}
\max \left( \frac{1}{q_{c}},-\frac{\alpha }{n},\frac{\beta +1}{\kappa }%
\right) \leq \frac{1}{s}<\frac{\beta +1}{\kappa }+\frac{\gamma +\alpha \beta 
}{n},
\end{equation*}%
with 
\begin{equation*}
\frac{1}{s}\neq -\frac{\alpha }{n},\qquad \frac{1}{s}\neq \frac{\beta +1}{%
\kappa }.
\end{equation*}
\end{enumerate}
\end{theorem}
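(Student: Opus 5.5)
The plan is to follow Step~2 of the proof of Theorem~\ref{result5}, replacing Lemma~\ref{Bernstein-Herz-ine1} (which needs $p\ge q$ in its notation, here $s\ge\frac{\kappa}{\beta+1}$) by Lemma~\ref{Bernstein-Herz-ine2}. In Theorem~\ref{result5 copy(1)} we have $\frac1s>\frac{\beta+1}{\kappa}$, i.e.\ $s<\frac{\kappa}{\beta+1}$, so the target exponent is smaller than the exponent $\frac{\kappa}{\beta+1}$ of the nonlinearity; this is exactly the regime $p\le q$ covered by Lemma~\ref{Bernstein-Herz-ine2}. Write
\[
u(t)-e^{t\Delta}u_0=a\int_0^t e^{(t-\tau)\Delta}\bigl(|\cdot|^{-\gamma}|u(\tau)|^{\beta}u(\tau)\bigr)\,d\tau .
\]
For each $\tau$, apply Lemma~\ref{Bernstein-Herz-ine2} with
\[
(\alpha_1,\alpha_2,p,q,s,\upsilon)=\Bigl(\alpha,\alpha(\beta+1),s,\tfrac{\kappa}{\beta+1},\tfrac{r}{\beta+1},\tfrac{r}{\beta+1}\Bigr),\qquad R=(t-\tau)^{-1/2},
\]
then use the embedding $\dot K_s^{\alpha,\frac{r}{\beta+1}}\hookrightarrow\dot K_s^{\alpha,r}$ and H\"older's inequality as before. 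This gives the factor
\[
(t-\tau)^{-\frac12\left(\frac{n(\beta+1)}{\kappa}-\frac ns+\alpha\beta+\gamma\right)}\,\|u(\tau)\|_{\dot K_\kappa^{\alpha,r}}^{\beta+1}\le M^{\beta+1}(\,\cdot\,)\,\tau^{-\delta(\beta+1)} .
\]

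The main step is to check the hypotheses of Lemma~\ref{Bernstein-Herz-ine2}.

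\textbf{First condition.} We need $\alpha+\frac ns>0$, which is $\frac1s>-\frac{\alpha}{n}$.

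\textbf{Lower bound.} We need $\alpha(\beta+1)>\alpha-\gamma+\frac ns-\frac{n(\beta+1)}{\kappa}$. This is equivalent to $\frac1s<\frac{\beta+1}{\kappa}+\frac{\gamma+\alpha\beta}{n}$, the assumed upper bound, and it is what explains that bound. We also need $\alpha(\beta+1)>-\frac{n(\beta+1)}{\kappa}$, which follows from $\frac1\kappa>-\frac{\alpha}{n}$ in \eqref{assumption1}.

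\textbf{Upper bound.} We need $\alpha(\beta+1)<\min\bigl(n-\frac{n(\beta+1)}{\kappa}-\gamma,\;n-\frac{n(\beta+1)}{\kappa}\bigr)$. This is \eqref{alpha}, which follows from \eqref{assumption1}.

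I expect the main obstacle to be the bookkeeping at the borderline values. When $\frac1s=\frac{\beta+1}{\kappa}$ we are back in the $p=q$ case, covered by Lemma~\ref{Bernstein-Herz-ine1}. When $\frac1s=-\frac{\alpha}{n}$ the lemma fails, which is why (ii) excludes it. I would also confirm that $(\beta+1)/\kappa<1$, so that the exponent ordering $1\le p\le q$ holds.

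Next comes the time integral. It scales exactly as in Step~2: by \eqref{alpha2}, the total exponent is $1-\frac12\bigl(\frac{n(\beta+1)}{\kappa}-\frac ns+\alpha\beta+\gamma\bigr)-\delta(\beta+1)=\frac{n}{2s}-\frac{n}{2q_c}$. The Beta integral converges for two reasons. First, $\delta(\beta+1)<1$. Second, $\frac12\bigl(\frac{n(\beta+1)}{\kappa}-\frac ns+\alpha\beta+\gamma\bigr)<1$, which holds because $\frac ns>\frac{n(\beta+1)}{\kappa}$ together with $\frac{n\beta}{\kappa}+\alpha\beta+\gamma<2$ (itself a consequence of $\delta>0$) gives $\frac{n(\beta+1)}{\kappa}-\frac ns+\alpha\beta+\gamma<\frac{n\beta}{\kappa}+\alpha\beta+\gamma<2$. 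This yields
\[
\|u(t)-e^{t\Delta}u_0\|_{\dot K_s^{\alpha,r}}\le cM^{\beta+1}t^{\frac n{2s}-\frac n{2q_c}} .
\]

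To finish, for (i) we have strict inequality $\frac1s>\frac1{q_c}$, so the exponent is positive and the difference tends to $0$ as $t\to0$. Continuity for $t>0$ comes from the standard splitting $\int_0^{t'}-\int_0^t$, using the semigroup property and strong continuity of $e^{h\Delta}$ together with the same kernel bounds (with dominated convergence). For (ii), equality $\frac1s=\frac1{q_c}$ is allowed; the bound only gives boundedness, so only continuity on $(0,\infty)$ is asserted, with the excluded endpoints handled as described above.
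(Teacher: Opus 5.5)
Your proposal is correct and follows the paper's own argument: the paper's proof reruns Step~2 of Theorem~\ref{result5} with Lemma~\ref{Bernstein-Herz-ine2} in place of Lemma~\ref{Bernstein-Herz-ine1}, and that is exactly what you do with the choice $(\alpha,\alpha(\beta+1),s,\frac{\kappa}{\beta+1},\frac{r}{\beta+1},\frac{r}{\beta+1})$ and $R=(t-\tau)^{-1/2}$. Your explicit check of the lemma's hypotheses goes beyond the paper: you show that $\alpha(\beta+1)>\alpha-\gamma+\frac ns-\frac{n(\beta+1)}{\kappa}$ is equivalent to $\frac1s<\frac{\beta+1}{\kappa}+\frac{\gamma+\alpha\beta}{n}$, and that the remaining conditions follow from \eqref{assumption1}. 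Your sketch of continuity for $t>0$ likewise supplies details the paper leaves implicit.
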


\begin{proof}
The proof follows the same argument as that of Theorem~\ref{result5}, with
Lemma~\ref{Bernstein-Herz-ine2} replacing Lemma~\ref{Bernstein-Herz-ine1}.
\end{proof}

\begin{remark}
	Additional results on the Hardy--H\'{e}non parabolic equation in Herz spaces, such as the asymptotic behavior of global mild solutions, are postponed to future work.
\end{remark}


\end{document}